\numberwithin{equation}{section}
\newcommand{\ddn}{{\mathrm{d}}}
\newcommand{\dd}{\,{\mathrm{d}}}
\newtheorem{theorem}{Theorem}[section]
\newtheorem{lemma}[theorem]{Lemma}
\newtheorem{remark}[theorem]{Remark}
\newtheorem{definition}[theorem]{Definition}
\begin{document}




\title[]{A Finite Volume Scheme for the Solution of a Mixed Discrete--Continuous Fragmentation Model}


\author{Graham Baird}

\author{Endre~S{\"u}li}

\address{Mathematical Institute, University of Oxford, Woodstock Road, Oxford OX2 6GG, UK
email: \texttt{graham.baird@maths.ox.ac.uk}, \texttt{endre.suli@maths.ox.ac.uk}}

\begin{abstract}
\noindent This paper concerns the construction and analysis of a numerical scheme for a mixed discrete-continuous fragmentation equation. A finite volume scheme is developed, based on a conservative formulation of a truncated version of the equations. The approximate solutions provided by this scheme are first shown to display conservation of mass and preservation of nonnegativity. Then, by utilising a Dunford--Pettis style argument, the sequence of approximate solutions generated is shown, under given restrictions on the model and the mesh, to converge (weakly) in an appropriate $L_1$ space to a weak solution to the problem. Additionally, by applying the methods and theory of operator semigroups, we are further able to show that weak solutions to the problem are unique and necessarily classical (differentiable) solutions. Finally, numerical simulations are performed to investigate the performance of the scheme and assess its rate of convergence.
\end{abstract}

\keywords{Finite volume method,  fragmentation models, mixed discrete--continuous fragmentation model, convergence}

\subjclass[2010]{35L65, 45K05, 65M08, 65R20}


\maketitle


\section{Introduction}

Fragmentation and coagulation processes occur in many physical systems, with the associated mathematical models receiving much attention in the literature. Example application areas include colloid science \cite{ziff80,Costas95}, population dynamics \cite{Degond17a,Degond17b} and astrophysics \cite{johansen08,Dullemond14}. However, analytical solutions to these models are only available for a limited number of specific cases, and we often have to resort to approximate solutions generated by an appropriate numerical scheme. A range of numerical techniques have been applied to these problems, and these broadly fall into two categories: those involving a stochastic (Monte Carlo) element, for example \cite{guias1997,babovsky99,eibeck2000} and those based around various deterministic approximation schemes \cite{Kumar96, Nicmanis96, Barrett96, Smirnov16}. The introductory chapter of \cite{kumarphd10} and the references therein provide a detailed overview of a number of these approximation methods.

In the earlier work \cite{baird18}, we presented a mixed discrete-continuous model of fragmentation in an attempt to resolve the issue of `shattering' mass-loss observed in some purely continuous models \cite{mcgrady87}. By modelling the mass distribution amongst the smallest particles using a discrete model, whilst modelling the distribution of larger particle masses with a continuous model, the aim was to introduce a higher degree of physical fidelity thus resolving the shattering mass-loss problem, whilst also retaining the mathematical efficiency of the continuous model.

Given the similarities between this model and those existing in the literature, in addition to the added complexity of the mixed framework, we would expect in most cases to have to rely on numerical methods to obtain a solution. In this paper we present a numerical scheme for the solution of the mixed fragmentation model proposed in \cite{baird18}. The basis of the scheme is a finite volume discretisation of the continuous regime equation. The use of such a method would appear a reasonable choice in this case, given its conservative nature and the motivation behind the model development. Indeed, finite volume schemes have been commonly applied to the solution of coagulation and fragmentation equations, with the first such use being \cite{filbet04}, where the case of pure coagulation was considered. For problems involving fragmentation, the article \cite{bourgade08} sees such a scheme employed in approximating the binary coagulation and fragmentation equation, whilst \cite{kumarphd10} and \cite{Kumar15} examine their use for the multiple fragmentation equation, with \cite{kumar14} extending this to include coagulation. Further works have seen these methods applied to a number of  coagulation--fragmentation model variants, for example with the inclusion of spacial diffusion \cite{filbet08diffusion} and additional nucleation and growth processes \cite{QAMAR2009, kumar2013moment}. Whilst a number of articles  \cite{QAMAR2007,forestier2012,Saha19} cover the approximation of multi-dimensional coagulation or fragmentation, whereby particles may be classified by additional variables beyond their mass or volume.

\subsection{Mixed Discrete-Continuous Model}
In the mixed model of \cite{baird18}, a cut-off value $N\in \mathbb{N}$ is introduced; above this cut-off, particle mass is considered as a continuous variable, whilst below it, the particles are forced to take discrete integer masses. If we denote by $u_C(x,t)$ the particle mass density within the continuous mass regime ($x>N$), then the evolution of $u_C(x,t)$ is governed by the continuous multiple fragmentation equation:
\begin{align}\label{equation301}
\hspace{-8mm}\frac{\partial u_C(x,t)}{\partial t}&=-a(x)u_C(x,t)+\int_{x}^{\infty}a(y)b(x|y)u_C(y,t)\dd y, \hspace{1.92mm}x>N,\hspace{1.2mm}t>0,\\
                            u_C(x,0)&=c_{0}(x).\nonumber
\end{align}
This equation is similar in form to the multiple fragmentation equation introduced in \cite{mcgrady87}. The function $a(x)$ provides the fragmentation rate for a particle of mass $x$, whilst $b(x|y)$ represents the distribution of particles of mass $x>N$ resulting from the break-up of a particle of mass $y>x$. The functions $a$ and $b$ are assumed to be nonnegative measurable functions, defined on $\left(N,\infty\right)$ and $\left(N,\infty\right)\times\left(N,\infty\right)$, respectively. We also require $b(x|y)=0$ for $x>y$, since no particle resulting from a fragmentation event can have a mass exceeding the original particle. The initial mass distribution within the continuous regime is given by the nonnegative function $c_{0}(x)$.

Letting $u_{Di}(t)$ denote the concentration of discrete mass $i$-mer particles  ($i\leq N$) and $u_{D}(t)$ the $N$-component vector taking these values as entries, the change in the values $u_{Di}(t)$, $i=1,\dots,N$, is governed by the equation:

\begin{align}\label{equation302}
\hspace{-80mm}\frac{\ddn u_{Di}(t)}{\ddn t} &\hspace{-.7mm}=\hspace{-.7mm}-a_iu_{Di}(t)+\hspace{-1.9mm}\sum_{j=i+1}^{N}\hspace{-1.2mm}a_jb_{i,j}u_{Dj}(t)
+\hspace{-1mm}\int_{N}^{\infty}a(y)b_i(y)u_{\small{C}}(y,t)\dd y,\hspace{.7mm}\hspace{1.1mm}t>0,\\
 u_{D}(0)&=d_{0}.\nonumber
\end{align}
In the case of $i=N$, the second term becomes an empty sum and is taken to be $0$. The values $a_i$ give the rates at which $i$-mer particles fragment, with $a_1=0$. The quantities $b_{i,j}$ give the expected number of $i$-mers produced from the fragmentation of a $j$-mer and the functions $b_i(y)$ give the expected number of $i$-mers produced from the fragmentation of a particle of mass $y>N$. The underlying physics demands that each $a_i$, $b_{i,j}$ and $b_i(y)$ be nonnegative. Finally, $d_{0}$ is the $N$-component vector of nonnegative values, specifying the initial concentrations within the discrete regime.

During each fragmentation event, mass is simply redistributed from the larger particle to the smaller resulting particles, but the total mass involved should be conserved. This gives rise to the following two conditions to supplement equations~\eqref{equation301} and~\eqref{equation302}:
\begin{align}
\label{equation303}&\int_{N}^{y}xb(x|y)\dd x+\sum_{j=1}^{N}jb_j(y)=y\hspace{2mm} \text{for} \hspace{2mm}y>N,\\
\label{equation304}&\sum_{j=1}^{i-1}jb_{j,i}=i\hspace{2mm} \text{for} \hspace{2mm}i=2,\ldots,N.
\end{align}
The condition~\eqref{equation303} is an expression of mass conservation upon the fragmentation of a particle from the continuous mass regime. The equation~\eqref{equation304} comes from the conservation of mass when a particle from the discrete mass regime breaks up.

For further details on the mixed discrete-continuous model and its properties, the reader is directed to consult \cite{baird18} or \cite{bairdphd}.

\subsection{Truncation and Reformulation}
When considering the numerical solution of equations~\eqref{equation301} and~\eqref{equation302}, we encounter an issue in that the range of the continuous mass variable $x$ is an unbounded interval, which presents a computational problem. We therefore introduce a truncation parameter $R>N$, and restrict the continuous mass variable to the range $N<x<R$. Therefore, in place of equations~\eqref{equation301}, we consider the truncated version
\begin{align}\label{equation40001}
\hspace{-8mm}\frac{\partial u_{C}^R(x,t)}{\partial t}&=-a(x)u_{C}^R(x,t)+\int_{x}^{R}a(y)b(x|y)u_{C}^R(y,t)\dd y, \hspace{1.92mm}N<x<R,\hspace{1.2mm}t>0,\\
                            u_{C}^R(x,0)&=\chi_{(N,R)}(x)c_{0}(x), \hspace{2.5mm}N<x<R,\nonumber
\end{align}\\
where $\chi_{(N,R)}$ denotes the characteristic function of the interval $(N,R)$.
\noindent Taking our lead from the aforementioned articles, we now rewrite equation \eqref{equation40001} in a conservative form, although in our case we must include an additional sink term to account for the mass leaked down to the discrete regime. Therefore we end up with the following equation for the mass quantity $xu_{C}^R(x,t)$:
\begin{equation}\label{equation402}
\frac{\partial \left(xu_{C}^R\right)}{\partial t}=\frac{\partial \mathcal{F}^R\left(xu_{C}^R\right)}{\partial x}-S(xu_{C}^{R}),\hspace{1.5mm}
u_{C}^R(x,0)=c_{0}(x),\hspace{1.5mm}\text{for}\hspace{1.5mm}N<\hspace{-.5mm}x\hspace{-.5mm}
< R,\hspace{.5mm}t>0,
\end{equation}
where $\mathcal{F}^R$ and $S$ are a truncated flux term and sink term, respectively, given by
\begin{equation*}
\hspace{-4mm}\mathcal{F}^R(f)=\int_{x}^{R} \int_{N}^{x} \frac{y}{z} a(z) b(y|z)f(z)\dd y\dd z,\hspace{4mm}S(f)=\frac{a(x)}{x}\sum_{i=1}^{N}ib_i(x)f(x),\hspace{2mm}\text{for}\hspace{2mm}N<x<R.
\end{equation*}
The equation \eqref{equation40001} may be recovered from \eqref{equation402} by a formal application of Leibniz's rule for differentiating under the integral. However, the equivalence of the two forms can be seen to be justified rigorously in \cite[Appendix C]{bairdphd}. Before continuing, we establish a result concerning the behaviour of the flux term $\mathcal{F}^R$ at the limits of our domain.
\begin{lemma}\label{lastthing}If the kernels $a$ and $b$ are assumed to belong to $L_{\infty,loc}$ on the domains $[N,\infty)$ and $[N,\infty)\times[N,\infty)$ respectively, which will be the case in the upcoming analysis, then for
$f\in L_1(N,R)$ the flux term $\mathcal{F}^R(f)$ satisfies
\[\lim_{x\rightarrow N,R}\left|\mathcal{F}^R(f)(x)\right|=0.\]
\end{lemma}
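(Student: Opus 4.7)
The plan is to bound $\mathcal{F}^R(f)(x)$ in absolute value by a product of two factors, one of which shrinks to zero at each endpoint, and then invoke absolute continuity of the Lebesgue integral where needed.

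First I would exploit the local boundedness hypothesis. Since $a \in L_{\infty, loc}[N,\infty)$ and $b \in L_{\infty, loc}([N,\infty)\times[N,\infty))$, and since we are working on the bounded regions $[N,R]$ and $[N,R]\times[N,R]$, there exist constants $A, B < \infty$ such that $a(z) \leq A$ for a.e.\ $z \in (N,R)$ and $b(y|z) \leq B$ for a.e.\ $(y,z) \in (N,R)\times(N,R)$. Also, on this domain $y/z \leq R/N$. Taking absolute values inside the double integral therefore yields
\[
  |\mathcal{F}^R(f)(x)| \leq \frac{RAB}{N}\,(x-N)\int_x^R |f(z)|\dd z.
\]

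From this bound both limits follow quickly. As $x \to N$, the factor $(x-N)$ tends to $0$, while $\int_x^R |f(z)|\dd z \leq \|f\|_{L_1(N,R)} < \infty$, so the product vanishes. As $x \to R$, the factor $(x-N)$ is bounded by $R-N$, while $\int_x^R |f(z)|\dd z \to 0$ by the absolute continuity of the integral of the $L_1$ function $f$ (equivalently, by the dominated convergence theorem applied to $\chi_{(x,R)}|f|$).

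No step here looks difficult; the only subtlety is being careful to organise the estimate so that at each endpoint one of the two factors (the length of the vanishing interval in $y$, respectively the tail of $|f|$ on the vanishing interval in $z$) provides the decay while the other remains controlled. The hypothesis $f \in L_1(N,R)$ is used precisely to guarantee the absolute continuity of $\int_x^R |f|\dd z$ needed at the right endpoint.
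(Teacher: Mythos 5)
Your proof is correct, and it takes a genuinely different (and somewhat more elementary) route than the paper. The paper first uses the mass conservation condition~\eqref{equation303} to bound the inner integral via $\int_N^x yb(y|z)\dd y\leq\int_N^z yb(y|z)\dd y\leq z$, obtaining the dominating function $a(z)|f(z)|$, and then runs the Lebesgue dominated convergence theorem twice: once as $x\searrow N$ (where it does invoke the essential supremum $\beta(R)$ of $b$ to force pointwise decay of the integrand) and once as $x\nearrow R$ (where the characteristic function $\chi_{(x,R)}(z)$ supplies the pointwise decay). You instead use the $L_{\infty,loc}$ hypotheses on both kernels from the outset to produce the single explicit two-factor bound $\tfrac{RAB}{N}\,(x-N)\int_x^R|f(z)|\dd z$, after which each limit follows because one factor vanishes while the other stays bounded; the only measure-theoretic input is the absolute continuity of $\int_x^R|f|$ at the right endpoint. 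Your argument never needs the mass conservation condition and is shorter under the lemma's stated hypotheses; the paper's domination via~\eqref{equation303} is the more robust device in that it would survive if $b$ were only controlled through the mass constraint rather than an $L_\infty$ bound, but since the lemma explicitly assumes $b\in L_{\infty,loc}$, nothing is lost by your approach here.
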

\begin{proof}It is a straightforward matter to bound $\mathcal{F}^R(f)$ as follows:
\begin{equation}\label{final}
\left|\mathcal{F}^R(f)(x)\right|\leq \int_N^R \chi_{(x,R)}(z)\frac{a(z)\left|f(z)\right|}{z}\left(\int_N^x yb(y|z)\dd y \right)\dd z,
\end{equation}
which holds for $x \in(N,R)$. Recalling the mass conservation condition~\eqref{equation303}, we deduce that
\begin{equation*}
\chi_{(x,R)}(z)\left( \int_N^x yb(y|z)\dd y \right)\leq \int_N^z yb(y|z)\dd y\leq z,
\end{equation*}
for all $z\in(N,R)$. Hence the integrand appearing in~\eqref{final} is bounded above by $a(z)\left|f(z)\right|$, which, thanks to
$a\in L_{\infty,loc}[N,\infty)$ and $f\in L_1(N,R)$, is integrable.

Considering the limit as $x\rightarrow N$ first, if we denote by $\beta(R)$ the essential supremum of $b$ over $[N,R]\times[N,R]$, then we have
\begin{equation*}
\chi_{(x,R)}(z)\left( \int_N^x yb(y|z)\dd y \right)\leq x \beta(R)(x-N).
\end{equation*}
As such, the integrand in~\eqref{final} converges pointwise to 0 over $z\in(N,R)$ as we let $x\searrow N$. An application of the Lebesgue
dominated convergence theorem then gives the required convergence of $\left|\mathcal{F}^R(f)(x)\right|$ as $x\searrow N$. Turning now to the limit as $x\nearrow R$, another application of condition~\eqref{equation303} provides us with
\begin{equation*}
\chi_{(x,R)}(z)\left( \int_N^x yb(y|z)\dd y \right)\leq\chi_{(x,R)}(z) \int_N^z yb(y|z)\dd y\leq \chi_{(x,R)}(z)z,
\end{equation*}
for $z\in(N,R)$. Therefore, the integrand from~\eqref{final} must again converge pointwise to 0 over $(N,R)$, this time as we let $x\nearrow R$. Another application of the Lebesgue dominated convergence theorem gives the convergence of $\left|\mathcal{F}^R(f)(x)\right|$ to 0, as $x\nearrow R$.
\end{proof}
This result will be utilised later in a number of arguments, most significantly in approximating $\mathcal{F}^R$ within our numerical scheme and in establishing a weak formulation of equation~\eqref{equation402}.

The truncation of the continuous mass interval also has an impact on our discrete regime equation; therefore, instead of equation \eqref{equation302}, we consider
\begin{align}\label{equation403}
\hspace{-80mm}\frac{\ddn u_{Di}^R(t)}{\ddn t}
&\hspace{-.7mm}=\hspace{-.7mm}-a_iu_{Di}^R(t)+\hspace{-1.9mm}\sum_{j=i+1}^{N}\hspace{-1.2mm}a_jb_{i,j}
u_{Dj}^R(t)+\hspace{-1mm}\int_{N}^{R}\hspace{-.5mm}a(y)b_i(y)u_{C}^R(y,t)\dd y,\\
u_{Di}^R(0)&={d_{0}}_i,\hspace{2mm}\text{for}\hspace{2mm}i=1,2,\ldots,N,\hspace{2mm}t>0.\nonumber
\end{align}
In the case of $i=N$, the empty sum above is taken to be zero; this convention will be adopted in all similar cases which follow.

This truncation procedure is a standard approach when dealing with fragmentation and coagulation problems, having been applied for example in \cite{mcl1}, where the theory and methods of operator semigroups were employed, and \cite{stewart89} where an alternative weak compactness style argument was adopted. The common approach of these works involves establishing the existence of solutions to a sequence of such truncated problems. A limit is then obtained as the truncation point is increased without bound, with this limit then being shown to satisfy the untruncated problem in some sense. Although in this article we restrict our attention to the numerical approximation of the truncated discrete--continuous problem, as given by equations \eqref{equation40001} and \eqref{equation403}, it can be shown that the solutions to the truncated problems converge, in an appropriate space, to give the solutions to the untruncated \eqref{equation301} and \eqref{equation302}. The proof of this convergence argument follows similar lines to that set out in \cite[Section 8.3.2]{banasiak06}, with the reader being directed to \cite[Chapter 6]{bairdphd} for the specific details. Additionally, the reader may find an empirical examination of this convergence in \cite[Section 7.5]{bairdphd}, where the key factors influencing the convergence, and thus the selection of an appropriate $R$ are investigated.

\section{Preliminaries}

Having set out our problem in the previous section, we now present a brief outline of the key results which appear in the upcoming material and which may be considered nonstandard or which are particular to our case.

\begin{theorem}\label{theorem201} In the analysis pursued in subsequent results, we shall be working extensively in spaces of the type $L_1$. In particular we shall be working in the spaces $L_1=L_1((N,R)\times [0,T),\dd x \dd t)$ and $L_1^1=L_1((N,R)\times[0,T),x \dd x \dd t)$, where
$N$ is a positive integer and $R>N$ is a finite real value. With the associated norms, these form equivalent spaces.
\end{theorem}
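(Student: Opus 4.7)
The plan is to verify the equivalence of norms by exploiting the fact that the weight $x$ is bounded above and below by positive constants on the bounded interval $(N,R)$. Since $N$ is a positive integer and $R>N$ is finite, for every $(x,t) \in (N,R)\times [0,T)$ one has $N < x < R$, so the weight $x$ appearing in the definition of $L_1^1$ is comparable to the constant $1$ used in $L_1$.

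Concretely, I would write down the two norms
\[
\|f\|_{L_1} = \int_0^T\!\int_N^R |f(x,t)|\dd x\dd t,\qquad \|f\|_{L_1^1} = \int_0^T\!\int_N^R |f(x,t)|\, x\dd x\dd t,
\]
and then observe that multiplying the integrand of $\|f\|_{L_1}$ by the pointwise inequalities $N \le x \le R$ yields
\[
N\,\|f\|_{L_1} \;\le\; \|f\|_{L_1^1} \;\le\; R\,\|f\|_{L_1}.
\]
Because $N>0$ and $R<\infty$, both constants are strictly positive and finite, so the norms are equivalent; in particular the underlying vector spaces coincide as sets, and a sequence converges in one norm if and only if it converges in the other. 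Measurability issues do not arise, because the two norms are defined over the same product measure space $(N,R)\times[0,T)$ and differ only by a bounded, bounded-below multiplicative weight.

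There is no real obstacle here; the only thing to be careful about is to note explicitly where the hypotheses $N\ge 1$ and $R<\infty$ are used (they guarantee $0 < N \le x \le R < \infty$, preventing either the lower or upper comparison constant from degenerating). This equivalence will subsequently allow us to pass freely between weighted and unweighted formulations of integrals involving $u_C^R$ and its mass density $xu_C^R$ when establishing a priori bounds and weak-compactness estimates in later sections.
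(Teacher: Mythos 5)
Your proposal is correct and follows essentially the same argument as the paper: both establish the two-sided comparison $N\|f\|_{L_1}\leq\|f\|_{L_1^1}\leq R\|f\|_{L_1}$ by bounding the weight $x$ pointwise between $N$ and $R$ on the bounded interval $(N,R)$. No further comment is needed.
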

\begin{proof}
First let us suppose that $f\in L_1^1$; then we have
\[\|f\|_{L_1}=\int_0^T\int_N^R\left|f(x,t)\right|\dd x \dd t\leq\frac{1}{N}\int_0^T\int_N^R\left|f(x,t)\right|\,x\dd x \dd t=\frac{1}{N}\|f\|_{L_1^1}.\]
Therefore $f \in L_1$ also, with $\|f\|_{L_1}\leq \frac{1}{N}\|f\|_{L_1^1}$. Now let us assume that $f\in L_1$; then we have
\[\|f\|_{L_1^1}=\int_0^T\int_N^R\left|f(x,t)\right|\,x \dd x \dd t\leq R\int_0^T\int_N^R\left|f(x,t)\right|\dd x \dd t=R\|f\|_{L_1}.\]
Hence $f \in L_1^1$ with $\|f\|_{L_1^1}\leq R\|f\|_{L_1}$. Taken together, the above results show us that the spaces $L_1((N,R)\times[0,T),\dd x \dd t)$
and $L_1((N,R)\times[0,t),x\dd x \dd t)$ contain the same elements and have equivalent norms.
\end{proof}
This result shall prove useful in the forthcoming analysis, allowing us to switch spaces when mathematically convenient whilst retaining convergence.

Given a sequence $\left\lbrace f_n\right\rbrace_{n=1}^\infty$ in a normed vector space $\left(X,\|\cdot\|\right)$, we assume the reader is familiar with the concept of \emph{weak} convergence and in particular its definition in spaces of the type $L_1(\Omega,\mu)$. In our analysis when handling weakly convergent sequences we will usually find them appearing alongside other factors and we would like the product to converge weakly also. The following theorem gives us sufficient conditions for the product of two sequences to converge weakly and will be used extensively in the convergence proofs for our numerical schemes.

\begin{theorem}\label{theorem203}Let $\left(\Omega,\mathcal{A},\mu\right)$ be a measure space with $\mu$ finite. Suppose $f_h\rightharpoonup f$ in $L_1\left(\Omega,\mu\right)$, $g_h\rightarrow g$ point-wise $\mu$ a.e. in $\Omega$, and $\sup_{h}\|g_h\|_{L_\infty}<\infty$, then $f_hg_h\rightharpoonup fg$ in $L_1\left(\Omega,\mu\right)$.
\end{theorem}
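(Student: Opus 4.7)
The plan is to test against an arbitrary $\phi\in L_\infty(\Omega,\mu)$ and establish
\[\int_\Omega f_h g_h \phi \dd\mu \longrightarrow \int_\Omega f g \phi \dd\mu.\]
I would first observe that since $g_h\to g$ pointwise a.e.\ and $\|g_h\|_\infty\le M:=\sup_h\|g_h\|_\infty<\infty$, the limit $g$ inherits $\|g\|_\infty\le M$, so $g\phi\in L_\infty(\Omega,\mu)$. Then I split
\[\int_\Omega f_h g_h\phi \dd\mu-\int_\Omega f g\phi \dd\mu=\int_\Omega(f_h-f)g\phi \dd\mu+\int_\Omega f_h(g_h-g)\phi \dd\mu.\]
The first term tends to $0$ directly from the hypothesis $f_h\rightharpoonup f$ in $L_1$, applied to the test function $g\phi\in L_\infty$.

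The second term is where the real work lies. The obstacle is that $f_h$ carries no pointwise information while $g_h-g$ carries no uniform integrability information, so neither dominated convergence nor a direct weak-convergence argument applies to the product. The standard remedy is to combine the Dunford--Pettis theorem with Egorov's theorem. Since $f_h$ is weakly convergent in $L_1$ on the finite measure space $\Omega$, Dunford--Pettis gives that $\{f_h\}$ is uniformly integrable and norm-bounded: for every $\varepsilon>0$ there exists $\delta>0$ with $\int_E|f_h|\dd\mu<\varepsilon$ whenever $\mu(E)<\delta$, uniformly in $h$, and $K:=\sup_h\|f_h\|_{L_1}<\infty$. Since $\mu(\Omega)<\infty$, Egorov's theorem applied to $g_h\to g$ a.e.\ yields a measurable set $E\subset\Omega$ with $\mu(\Omega\setminus E)<\delta$ on which $g_h\to g$ uniformly.

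I would then estimate the second term by splitting over $E$ and $\Omega\setminus E$. On $E$, uniform convergence lets me bound the integrand by $\|\phi\|_\infty\bigl(\sup_E|g_h-g|\bigr)|f_h|$, which for $h$ large enough gives an integral no greater than $\varepsilon \|\phi\|_\infty K$. On $\Omega\setminus E$, using $|g_h-g|\le 2M$ and the choice of $\delta$, uniform integrability of $\{f_h\}$ yields
\[\int_{\Omega\setminus E}|f_h(g_h-g)\phi|\dd\mu\le 2M\|\phi\|_\infty\int_{\Omega\setminus E}|f_h|\dd\mu\le 2M\|\phi\|_\infty\varepsilon.\]
Combining these bounds shows the second term tends to $0$ as $h\to\infty$, completing the proof. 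The key point, and the step I expect to require the most care, is orchestrating the Dunford--Pettis/Egorov split so that the small set produced by Egorov coincides with a set on which uniform integrability already forces the contribution of $f_h$ to be small, converting pointwise a.e.\ convergence of $g_h$ into an estimate usable against a merely weakly convergent $f_h$.
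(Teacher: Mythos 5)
Your proof is correct. Note that the paper does not actually prove this theorem: it simply cites \cite[Proposition 2.61]{fonseca07}, so there is no in-paper argument to compare against. Your decomposition
\[
\int_\Omega f_h g_h\phi \dd\mu-\int_\Omega f g\phi \dd\mu=\int_\Omega(f_h-f)g\phi \dd\mu+\int_\Omega f_h(g_h-g)\phi \dd\mu,
\]
handling the first term by weak convergence against the fixed test function $g\phi\in L_\infty$ and the second by the Dunford--Pettis/Egorov split (uniform integrability of $\{f_h\}$ controlling the small exceptional set, uniform convergence controlling its complement), is the standard proof of this product-convergence lemma and is essentially the argument given in the cited reference; the finiteness of $\mu$ is used exactly where it must be, namely to invoke Egorov and to dispense with the tightness condition in Dunford--Pettis. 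Two cosmetic points only: the limit at the end should read $h\to 0$ (the mesh parameter decreases) rather than $h\to\infty$, and Egorov is stated for sequences, so one should understand $h$ as running through a sequence $h_j\searrow 0$, which is how the paper uses the result anyway.
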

\begin{proof}The reader is referred to \cite[Proposition 2.61]{fonseca07}.
\end{proof}

The main part of our convergence argument utilises the Dunford--Pettis theorem, which provides us with sufficient conditions to establish the weak convergence of our sequence of approximations. One such condition is that of \emph{equiintegrability}. There are a number of equivalent characterisations of equiintegrability, which the reader may find in \cite[Theorem 2.29]{fonseca07}. For our purposes the most important characterisation of equiintegrability is given by de la Vall\'ee Poussin's theorem, a refined version of which is given below.

\begin{theorem}\label{theorem204}\textnormal{(de la Vall\'ee Poussin's Theorem)} Let $\mathcal{F}$ be a bounded subset of $L_1\left(\Omega,\mu\right)$, then $\mathcal{F}$ is equiintegrable if and only if there exists a nonnegative, convex function $\Phi\in C^\infty \left([0,\infty)\right)$, with $\Phi(0)=0$ and $\Phi'(0)=1$, such that $\Phi'$ is concave and
\begin{equation*}
\frac{\Phi(x)}{x}\rightarrow\infty\hspace{2mm}\text{as}\hspace{2mm}x\rightarrow\infty \hspace{3mm}\text{and}\hspace{3mm}
\sup_{f\in \mathcal{F}}\int_{\Omega}\Phi\left(\left|f\right|\right)\dd \mu<\infty.
\end{equation*}
\end{theorem}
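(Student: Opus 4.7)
For the sufficiency direction I would set $M := \sup_{f \in \mathcal{F}} \int_\Omega \Phi(|f|) \dd \mu < \infty$ and exploit the superlinear growth $\Phi(x)/x \to \infty$ to pick, for each $\varepsilon > 0$, some $K > 0$ such that $x \leq \varepsilon \Phi(x)/M$ for all $x \geq K$. Splitting $\int_A |f| \dd \mu$ over the sets $\{|f| \leq K\}$ and $\{|f| > K\}$ then gives the standard estimate
\[
\int_A |f| \dd \mu \leq K\mu(A) + \frac{\varepsilon}{M}\int_\Omega \Phi(|f|) \dd \mu \leq K\mu(A) + \varepsilon,
\]
which is at most $2\varepsilon$ whenever $\mu(A) < \varepsilon/K$. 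This is the classical equiintegrability argument and uses only nonnegativity and superlinearity of $\Phi$; the remaining listed properties of $\Phi$ play no role here.

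For the necessity direction I would first recover the classical de la Vall\'ee Poussin theorem. Equiintegrability of $\mathcal{F}$, together with the uniform $L_1$-bound and the Chebyshev inequality, lets me select inductively a strictly increasing sequence $0 = \alpha_0 < \alpha_1 < \alpha_2 < \cdots$ with $\alpha_n \to \infty$ and
\[
\sup_{f \in \mathcal{F}} \int_{\{|f| > \alpha_n\}} |f| \dd \mu \leq 2^{-n}
\quad \text{for every } n \geq 1.
\]
The staircase choice that sets $\Phi'$ equal to $n$ on $[\alpha_n, \alpha_{n+1})$ then yields a convex, piecewise linear $\Phi$ with $\Phi(x)/x \to \infty$ and a uniform bound on $\int_\Omega \Phi(|f|) \dd \mu$ controlled by $\sum_{n \geq 1} n\cdot 2^{-n}$. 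What this construction does \emph{not} deliver is the $C^\infty$ regularity of $\Phi$, the normalisation $\Phi'(0) = 1$, or the concavity of $\Phi'$.

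The refinement would then be obtained by replacing the staircase derivative with $\Phi'(x) = 1 + \int_0^x g(t) \dd t$, where $g \in C^\infty([0,\infty))$ is strictly positive, monotonically decreasing to zero, and satisfies $\int_0^\infty g = \infty$. By construction, $\Phi \in C^\infty$ is convex with $\Phi(0) = 0$; $\Phi'$ is smooth, increasing, concave, and satisfies $\Phi'(0) = 1$ and $\Phi'(x) \to \infty$; hence $\Phi(x)/x \to \infty$. The remaining task is to tune $g$ so that $\sup_{f \in \mathcal{F}} \int_\Omega \Phi(|f|) \dd \mu < \infty$. Using the layer-cake identity
\[
\int_\Omega \Phi(|f|) \dd \mu = \int_0^\infty \Phi'(t)\, \mu(\{|f| > t\}) \dd t,
\]
combined with $t\, \mu(\{|f|>t\}) \leq \int_{\{|f|>t\}} |f| \dd \mu$ and the $2^{-n}$ estimate above, one chooses $g$ to be approximately constant on each $[\alpha_n, \alpha_{n+1})$ with values decaying geometrically, then smoothly interpolates between these levels. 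The principal obstacle is reconciling four competing demands on $g$: smoothness, positivity with monotonic decrease (so that $\Phi'$ is concave), a divergent integral (so that $\Phi$ is superlinear), and fast enough decay to keep $\int_\Omega \Phi(|f|) \dd \mu$ uniformly bounded on $\mathcal{F}$. The key degree of freedom is that equiintegrability allows the thresholds $\alpha_n$ to be pushed arbitrarily far apart, which provides the slack needed to make these requirements simultaneously attainable.
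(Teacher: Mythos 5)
Your proposal is essentially correct, but it takes a genuinely different (and much longer) route than the paper. The paper's proof of necessity is almost entirely a citation: it invokes Lauren\c{c}ot's refined de la Vall\'ee Poussin theorem, which already supplies a convex $\Psi\in C^\infty$ with $\Psi(0)=\Psi'(0)=0$, $\Psi'$ concave, superlinear growth and the uniform bound, and then performs the single trick $\Phi(x)=\Psi(x)+x$. This additive shift simultaneously fixes the normalisation $\Phi'(0)=1$ and, via the convexity inequality $\Phi(x)\geq\Phi(0)+\Phi'(0)x=x\geq 0$, delivers the nonnegativity; all other properties (smoothness, convexity, concavity of the derivative, superlinearity, and finiteness of $\sup_f\int_\Omega\Phi(|f|)\dd\mu$, since $\int_\Omega|f|\dd\mu$ is uniformly bounded) are inherited. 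Your proposal instead reconstructs the cited result from scratch via $\Phi'(x)=1+\int_0^x g(t)\dd t$ with $g$ smooth, positive, decreasing and non-integrable. That construction is sound and is in fact the standard one underlying the cited theorem; your observation that the freedom to push the thresholds $\alpha_n$ apart is what reconciles the competing demands on $g$ is exactly the right point. The one step you describe but do not carry out --- the tuning of $g$ --- is where all the work lies, and it should be made explicit: e.g.\ enlarge the $\alpha_n$ so that $\alpha_{n+1}-\alpha_n\geq 2^{n}$, take $g\approx 2^{-n}$ on $[\alpha_n,\alpha_{n+1})$ so that $\int_{\alpha_n}^{\alpha_{n+1}}g\approx 1$ (whence $\int_0^\infty g=\infty$ and $\Phi'(t)\leq n+2$ on that block), and then the layer-cake identity together with $\int_{\alpha_n}^{\infty}\mu(\{|f|>t\})\dd t\leq\int_{\{|f|>\alpha_n\}}|f|\dd\mu\leq 2^{-n}$ gives $\int_\Omega\Phi(|f|)\dd\mu\leq 2\sup_f\|f\|_{L_1}+\sum_n(n+2)2^{-n}<\infty$, uniformly in $f\in\mathcal{F}$. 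With that estimate written down, your self-contained argument is complete; as it stands it is a correct sketch whose decisive inequality is asserted rather than proved. The sufficiency direction you give is the classical truncation argument and matches what the paper obtains by citing the standard version of the theorem.
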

\begin{proof} The necessity of this condition can be derived easily from \cite[Theorem 8]{laurencot15}, which under the assumption that $\mathcal{F}$ is equiintegrable provides us with a $\Psi$ satisfying all the stated conditions with the exception that the function $\Psi$ has derivative $0$ at $0$ and is not stated to be nonnegative. Given such a $\Psi$, we set $\Phi(x)=\Psi(x)+x$. Then $\Phi$ retains the required properties of $\Psi$ but additionally $\Phi'(0)=1$. Also, by utilising the following standard inequality for $C^1$ convex functions
\begin{equation}\label{equation203}
\Phi(x)\geq\Phi(y)+\Phi'(y)(x-y),
\end{equation}
with $x\geq 0$ and $y=0$  we can see that $\Phi(x)$ must be nonnegative on $[0,\infty)$. The sufficiency of our conditions comes straight from the standard version of the de la Vall\'ee Poussin theorem \cite[Theorem 2.29 (iii)]{fonseca07}.
\end{proof}

In our analysis we shall require some properties of such a function, which we set out in the following lemma.
\begin{lemma}\label{lemma203} Let $\Phi$ be as in Theorem~\ref{theorem204}; then for nonnegative $x$ and $y$ we have the following:
\begin{enumerate}
  \item $x\Phi'(y)\leq \Phi(x)+\Phi(y)$,
  \item $\Phi'(y)\geq 0$.
\end{enumerate}
\end{lemma}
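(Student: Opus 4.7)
For part (2), the idea is to exploit the convexity of $\Phi$ together with $\Phi'(0)=1$. Since $\Phi\in C^\infty$ is convex on $[0,\infty)$, its derivative $\Phi'$ is nondecreasing on $[0,\infty)$. Combined with $\Phi'(0)=1$ this immediately gives $\Phi'(y)\geq 1\geq 0$ for every $y\geq 0$, which is stronger than what is claimed.

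For part (1), my starting point is the convexity inequality~\eqref{equation203}, which, rearranged, reads
\[
x\Phi'(y)\leq \Phi(x)-\Phi(y)+y\Phi'(y).
\]
Hence the whole task reduces to showing the one-variable bound $y\Phi'(y)\leq 2\Phi(y)$, after which substitution yields $x\Phi'(y)\leq \Phi(x)+\Phi(y)$ as desired.

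To obtain $y\Phi'(y)\leq 2\Phi(y)$ I would use the concavity of $\Phi'$ on $[0,y]$, together with $\Phi'(0)=1\geq 0$ (from part (2), applied at $0$). Concavity gives the chord estimate
\[
\Phi'(s)\geq \frac{s}{y}\Phi'(y)+\left(1-\frac{s}{y}\right)\Phi'(0)\geq \frac{s}{y}\Phi'(y)\quad\text{for all }s\in[0,y],
\]
and integrating from $0$ to $y$, using $\Phi(0)=0$, produces
\[
\Phi(y)=\int_0^y \Phi'(s)\dd s\geq \frac{\Phi'(y)}{y}\int_0^y s\dd s=\frac{y\Phi'(y)}{2}.
\]
Plugging this into the rearranged convexity inequality closes part (1).

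The only real subtlety is recognising that the concavity of $\Phi'$ — one of the extra hypotheses carried over from the refined de~la~Vall\'ee Poussin statement in Theorem~\ref{theorem204} — is exactly what is needed to control $y\Phi'(y)$ by $\Phi(y)$; without it the bound $y\Phi'(y)\leq 2\Phi(y)$ is false in general (e.g.\ for superquadratic $\Phi$ whose derivative grows faster than linearly). Beyond that observation the argument is routine manipulation of the convexity and boundary data.
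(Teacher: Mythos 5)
Your proof is correct, and for part (1) it is genuinely more self-contained than the paper's: the paper simply cites \cite[Proposition 13 (30)]{laurencot15} for that inequality, whereas you reconstruct the argument — the supporting-line inequality \eqref{equation203} reduces the claim to the one-variable bound $y\Phi'(y)\leq 2\Phi(y)$, which you obtain by integrating the chord estimate for the concave function $\Phi'$ over $[0,y]$. This is essentially the proof hiding behind the citation, and your closing remark correctly identifies the concavity of $\Phi'$ (carried over from the refined de la Vall\'ee Poussin statement, Theorem~\ref{theorem204}) as the indispensable hypothesis. For part (2) you use monotonicity of $\Phi'$ (from convexity of $\Phi$) together with $\Phi'(0)=1$ to get the stronger conclusion $\Phi'(y)\geq 1$; the paper instead sets $x=0$ in \eqref{equation203} to obtain $y\Phi'(y)\geq \Phi(y)\geq 0$ and divides by $y$, treating $y=0$ separately. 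Both routes are valid, and yours buys a sharper constant. The only cosmetic gap is that your chord estimate divides by $y$ and so formally requires $y>0$; the case $y=0$ is harmless — the bound $y\Phi'(y)\leq 2\Phi(y)$ is the trivial $0\leq 0$ there, and the rearranged convexity inequality at $y=0$ reads $x\Phi'(0)=x\leq \Phi(x)$, which follows directly from \eqref{equation203} — but it deserves a sentence.
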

\begin{proof}The first of these inequalities is nonstandard and the proof can be found in \cite[Proposition 13 (30)]{laurencot15}. For the second property we return to inequality~\eqref{equation203}, with $x=0$ and $y\geq 0$, which gives us
\[\underbrace{\Phi(0)}_{=0}\geq\underbrace{\Phi(y)}_{\geq 0}+\Phi'(y)(0-y).\]
An obvious rearrangement yields
\[y\Phi'(y)\geq\Phi(y)\geq0.\]
Now in the case that $y=0$ property (ii) is given by the definition of $\Phi$. Hence we may assume that $y>0$ and divide through by it to obtain the desired result that $\Phi'(y)\geq 0$.
\end{proof}
We now come to the Dunford--Pettis theorem, one of the most significant technical tools applied in this work. The theorem provides necessary and sufficient conditions for a subset of an $L_1$ space to be \emph{weakly sequentially compact}. That is, any sequence in the subset must have a subsequence which is weakly convergent.

\begin{theorem}\label{theorem205}\textnormal{(Dunford--Pettis Theorem)} Let $\left(\Omega,\mathcal{A},\mu\right)$ be a measure space and let $\mathcal{F}\subset L_1\left(\Omega,\mu\right)$. Then $\mathcal{F}$ is weakly sequentially compact if and only if the following conditions are satisfied:
\begin{enumerate}
  \item $\mathcal{F}$ is bounded in $L_1\left(\Omega,\mu\right)$;
  \item $\mathcal{F}$ is equiintegrable;
  \item For every $\varepsilon >0$ there exists $A_\varepsilon\subset \Omega$ with $A_\varepsilon\in \mathcal{A}$ such that $\mu\left(A_\varepsilon\right)<\infty$ and
      \[ \sup_{f\in \mathcal{F}} \int_{\Omega\setminus A_\varepsilon} \left|f\right|\dd \mu\leq \varepsilon.\]
\end{enumerate}
We note that in the case that $\mu(\Omega)<\infty$ condition \textup{(}iii\textup{)} is automatically satisfied by taking $A_\varepsilon= \Omega$ for all values of $\varepsilon$.
\end{theorem}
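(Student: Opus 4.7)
The plan is to prove the two implications separately, though given the classical nature of the result and the paper's precedent of citing \cite{fonseca07} for analogous technical tools (see Theorem~\ref{theorem203}), in practice the complete argument would most naturally be delivered as a reference to a standard source such as \cite[Theorem 2.54]{fonseca07}. For sufficiency, I would extract a weakly convergent subsequence from an arbitrary $\{f_n\}\subset\mathcal{F}$ in stages. First, apply condition~(iii) with $\varepsilon_k\downarrow 0$ to obtain a nested sequence of sets $A_k:=A_{\varepsilon_k}$ of finite measure, so that a diagonal argument reduces the problem to extracting weak $L_1$ subsequential limits on each $A_k$; the uniform tail bound $\sup_n\int_{\Omega\setminus A_k}|f_n|\dd\mu\leq\varepsilon_k$ then glues these partial limits into a single weak limit on $\Omega$.

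On each set of finite measure, I would invoke Theorem~\ref{theorem204} to obtain a convex $\Phi$ with $\Phi(t)/t\to\infty$ and $\sup_n\int\Phi(|f_n|)\dd\mu<\infty$. Truncating $f_n^M:=f_n\chi_{\{|f_n|\leq M\}}$ yields a sequence uniformly bounded in $L_\infty$ on a finite-measure set, hence bounded in the reflexive space $L_2$, from which weak subsequential limits exist and transfer to weak $L_1$ limits thanks to $\mu(A_k)<\infty$. The truncation error is controlled uniformly via $|f|\chi_{\{|f|>M\}}\leq \Phi(|f|)/\Phi'(M)$, with $\Phi'(M)\to\infty$ as $M\to\infty$ (a consequence of Lemma~\ref{lemma203} together with the superlinearity of $\Phi$). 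A further diagonal extraction over $M\to\infty$, combined with a Cauchy-in-weak-topology argument, produces the desired weakly convergent subsequence. For necessity, boundedness~(i) is the uniform boundedness principle applied via the duality $L_1^\ast\cong L_\infty$. Equiintegrability~(ii) and tightness~(iii) I would establish by contradiction using a gliding-hump construction: failure of~(ii) yields $\varepsilon>0$, functions $f_n\in\mathcal{F}$, and (after disjointification) a disjoint sequence of sets $E_n$ with $\int_{E_n}|f_n|\dd\mu>\varepsilon$, and testing any putative weakly convergent subsequence against $\mathrm{sgn}(f_{n_k})\chi_{E_{n_k}}\in L_\infty$ yields the desired contradiction; condition~(iii) is handled by an analogous argument with the $E_n$ chosen to escape every finite-measure exhaustion.

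The main obstacle lies in the sufficiency direction: since $L_1$ is not reflexive, passing from weakly convergent truncations $f_n^M$ to a weak limit of $f_n$ itself requires handling the truncation error uniformly in $n$ and consistently across $M\to\infty$ via a two-parameter diagonal process, and the gluing across the sets $A_k$ (necessary because condition~(iii) only supplies a $\sigma$-finite reduction) is the most delicate step. A clean way to package these manipulations is to invoke the Eberlein--\v{S}mulian theorem, which equates weak compactness with weak sequential compactness in any Banach space, and to work with the weak topology on the full $L_1(\Omega,\mu)$ throughout; this permits the extraction of cluster points without first committing to a particular sequence, thereby sidestepping the repeated diagonal extractions.
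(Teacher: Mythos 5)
Your proposal matches the paper's approach: the paper's entire proof is the citation ``See \cite[Theorem 2.54]{fonseca07}'', which you correctly identify as the natural way to deliver this classical result. Your supplementary from-scratch sketch is a sound outline of the standard argument (truncation plus reflexivity on finite-measure sets, de la Vall\'ee Poussin control of the tails, gliding hump for necessity), with only a minor slip in the truncation estimate, where the correct uniform bound is $\int_{\{|f|>M\}}|f|\dd\mu\leq \bigl(M/\Phi(M)\bigr)\int\Phi(|f|)\dd\mu$ rather than division by $\Phi'(M)$, the latter being too large for the pointwise inequality to hold near $|f|=M$.
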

\begin{proof}See \cite[Theorem 2.54]{fonseca07}.
\end{proof}

In the later analysis of this paper we shall be relying heavily on the methods and theory of operator semigroups. In particular the concept of substochastic semigroups, the Kato--Voigt perturbation theorem and the notion of semigroup honesty. For the sake of brevity we refrain from outlining such material here, however the reader may find details of the requisite results in the preliminary sections of \cite{baird18} or \cite{bairdphd} or the text \cite{banasiak06}.

\section{Development of the Numerical Scheme}\label{section1}

We now introduce our numerical approximation scheme for the truncated system, \eqref{equation402} and \eqref{equation403}. First we must
discretise the continuous mass variable $x$, and so we introduce the mesh $\left\lbrace x_{i-1/2}\right\rbrace_{i=0}^{I_h}$ on the interval $(N,R)$, with
\[x_{-1/2}\hspace{-.2mm}=\hspace{-.2mm}N,\hspace{1.3mm}x_{I_h-1/2}\hspace{-.2mm}=\hspace{-.2mm}R,\hspace{1.3mm}x_{i}\hspace{-.2mm}=\hspace{-.2mm}
(x_{i-1/2}+x_{i+1/2})/2,
\hspace{1.3mm}h/k<\Delta x_{i}\hspace{-.2mm}=\hspace{-.2mm}x_{i+1/2}-x_{i-1/2}\hspace{-.5mm}<\hspace{-.5mm}h,\]
where $h\in(0,1)$ and $k>1$ is some constant. Additionally we denote the interval $[x_{i-1/2},x_{i+1/2})$ by $\Lambda_i$, however the (left-hand-most)
interval $\Lambda_0$ is taken to be $(x_{-1/2},x_{1/2})$.

For the time variable $t$, if $T$ is the final time up to which we wish to compute an approximate solution, then we define the time step
$\Delta t=T/M$ where $M$ is some large integer. The time points are then given by $t_n=n\Delta t$ for $n=0,1,\dots,M$ with corresponding time intervals
$\tau_n=[t_n,t_{n+1})$ for $n=0,1,\dots,M-1$.

We restrict the choice of the mesh by assuming the existence of positive constants $k_1$ and $k_2$ so that the mesh sizes $h$ and $\Delta t$ satisfy
\begin{equation}\label{equation404}
k_1h\leq \Delta t \leq k_2 h.
\end{equation}

The numerical scheme requires representative values for the functions $a(x)$, $b(x|y)$ and $b_i(y)$ over the appropriate intervals. This is
done by taking their average value over each interval. Therefore we define
\begin{equation}\label{equation405}
A_i=\frac{1}{\Delta x_{i}}\int_ {\Lambda_{i}} a(x)\dd x\hspace{3mm}\text{for}\hspace{2mm}i=0,1,\dots,I_h-1,\nonumber
\end{equation}
as our approximation of $a(x)$ over the interval $\Lambda_{i}$. We approximate $b(x|y)$ over $\Lambda_{i}\times \Lambda_{j}$ by
\begin{equation}\label{equation406}
B_{i,j}=\frac{1}{\Delta x_{i}\Delta x_{j}}\int_ {\Lambda_{j}} \int_ {\Lambda_{i}}\hspace{-1mm} b(x|y)\dd x\dd y\hspace{2mm}\text{for}\hspace{2mm}
i=0,1,...,I_h-1\hspace{2mm}\text{and}\hspace{2mm}j=0,1,\ldots,I_h-1,\nonumber
\end{equation}
and the functions $b_i(y)$ are approximated over $\Lambda_{j}$ by the values
\begin{equation}\label{equation407}
\tilde{B}_{i,j}=\frac{1}{\Delta x_{j}}\int_ {\Lambda_{j}} b_i(y)\dd y\hspace{2mm}\text{for}\hspace{2mm}i=1,2,\ldots,N\hspace{2mm}\text{and}\hspace{2mm}
j=0,1,\dots,I_h-1.\nonumber
\end{equation}

We note by our initial assumption regarding the nonnegativity of $a$, $b$ and $b_i$, that each of the values introduced above must be nonnegative. If $\chi_I$ denotes the characteristic function of a set $I$, then we can construct piecewise constant approximations to the functions $a$, $b$ and $b_i$ as follows:
\begin{equation*}
a^h(x) \hspace{-.5mm}=\hspace{-.5mm}\sum_{i=0}^{I_h-1} \chi_{\Lambda_i}(x)A_i, \hspace{1.5mm}
b^h(x|y)\hspace{-.5mm}=\hspace{-.5mm}\sum_{j=0}^{I_h-1}\sum_{i=0}^{I_h-1}\chi_{\Lambda_i}(x)\chi_{\Lambda_j}(y)B_{i,j}, \hspace{1.5mm}
b_i^h(y)\hspace{-.5mm}=\hspace{-.5mm}\sum_{j=0}^{I_h-1} \chi_{\Lambda_j}(y)\tilde{B}_{i,j}.
\end{equation*}
\begin{remark} \label{convremark1}
This is a standard means of approximation and assuming the choice of kernels is suitably restricted, the approximations will converge pointwise to the desired functions almost everywhere on the appropriate domains. In our case, the kernels $a$ and $b$ will be assumed to be $L_{\infty,loc}$ on $[N,\infty)$ and $[N,\infty)\times[N,\infty)$, respectively. In addition, the restriction \eqref{equation303} determines each $b_i$ as an element of $L_{\infty,loc}[N,\infty)$. Having $a$, $b$ and $b_i$ as $L_{\infty,loc}$ functions is sufficient to ensure that the approximations $a^h$, $b^h$ and $b_i^h$ converge pointwise to  $a$, $b$ and $b_i$ almost everywhere in their respective domains. This is a standard result, however full details can be found in \cite[Lemma 4.2.1]{bairdphd}.
\end{remark}

We are now ready to construct the approximation scheme. Let $u_{C}^{n,i}$ denote our approximation to $u_C^R(x,t)$ over the mass interval $\Lambda_i$ for the time interval $\tau_n$. The equation \eqref{equation402} is then approximated by

\begin{equation}\label{equation4075}
x_i\frac{u_{C}^{n+1,i}-u_{C}^{n,i}}{\Delta t}=\frac{F_{i+1/2}^n-F_{i-1/2}^n}{\Delta x_{i}}-S_i^n,\nonumber
\end{equation}

where $F_{i-1/2}^n$ is an approximation of the flux $\mathcal{F}^R(xu_C^R)$ at the point $x=x_{i-1/2}$ over the time interval $\tau_n$, and is given by
\begin{align}\label{equation408}
\left(\mathcal{F}^R(xu_C^R)\right)(x_{i-1/2})&=\int_{x_{i-1/2}}^{R} \int_{N}^{x_{i-1/2}}y a(z) b(y|z)u_C^R(z,t)\dd y\dd z\nonumber\\
&=\sum_{j=i}^{I_h-1}\int_{\Lambda_j}\left(\sum_{k=0}^{i-1}\int_{\Lambda_k} y a(z) b(y|z)u_C^R(z,t)\dd y\right)\dd z\nonumber\\
&\approx\sum_{j=i}^{I_h-1}\sum_{k=0}^{i-1}x_{k}A_jB_{k,j}u_{C}^{n,j}\Delta x_{k}\Delta x_{j}=:F_{i-1/2}^n\nonumber,
\end{align}
for $i=1,\ldots,I_h-1$, with $F_{-1/2}^n=F_{I_h-1/2}^n=0$, which can be justified by Lemma~\ref{lastthing}. The values $S_i^n$ approximate the sink term $S(xu_C^R)$ over $\Lambda_i$ for the
time interval $\tau_n$, and are computed by
\begin{equation}\label{sink}
S_i^n=A_i\sum_{j=1}^{N}j\tilde{B}_{j,i}u_{C}^{n,i}\hspace{1.5mm}\text{for}\hspace{1.5mm}i=0,1,\dots,I_h-1.
\end{equation}

This gives rise to the following numerical method for the computation of the approximations $u_{C}^{n,i}$:
\begin{equation}\label{equation409}
u_{C}^{n+1,i}=u_{C}^{n,i}+\frac{\Delta t}{x_i\Delta x_{i}}(F_{i+1/2}^n-F_{i-1/2}^n)-\frac{\Delta t}{x_i}S_i^n\hspace{2mm}\text{for}
\hspace{1.5mm}\left\lbrace \begin{array}{l} \hspace{.5mm}i=0,1,\dots,I_h-1, \\ n=0,1,\dots,M-1. \end{array}\right.
\end{equation}
The sequence of approximations generated by \eqref{equation409} requires us to provide an initial set of values to get started. For our starting values we simply average the initial datum over each of the mass intervals; hence
\begin{equation}\label{equation410}
u_{C}^{0,i}=\frac{1}{\Delta x_{i}}\int_ {\Lambda_{i}} c_0(x)\dd x\hspace{3mm}\text{for}\hspace{2mm}i=0,1,\dots,I_h-1.
\end{equation}
Then our approximation to $u_{C}^R(x,t)$ over $(N,R)\times[0,T)$ is constructed as follows:
\begin{equation}\label{equation4105}
u^h_C(x,t)=\sum_{n=0}^{M-1} \sum_{i=0}^{I_h-1} \chi_{\Lambda_i}(x)\chi_{\tau_n}(t)u_{C}^{n,i}.
\end{equation}
\begin{remark} \label{convremark2}
The convergence proof for our numerical scheme requires the initial approximation given by \eqref{equation410} and \eqref{equation4105} to converge strongly in $L_1(N,R)$ to the restriction of $c_0$ to $(N,R)$. Again, that this is the case with our definition of the discretised initial datum is a standard result and details can be found in \cite[Lemma 4.2.2]{bairdphd}.\\
\end{remark}

Now considering the discrete regime, let $u_{D}^{n,i}$ denote our approximation of $u_{Di}^R(t)$ over the time interval $\tau_n$. Equation \eqref{equation403} is then
approximated as\\
\begin{equation}\label{equation411}
\frac{u_{D}^{n+1,i}-u_{D}^{n,i}}{\Delta t}=-a_iu_{D}^{n,i}+\hspace{-1.9mm}\sum_{j=i+1}^{N}a_jb_{i,j}u_{D}^{n,j}+\sum_{j=0}^{I_h-1}A_j
\tilde{B}_{i,j}u_{C}^{n,j}
\Delta x_{j},\nonumber
\end{equation}
giving rise to the relation
\begin{equation}\label{equation412}
u_{D}^{n+1,i}=(1-\Delta ta_i)u_{D}^{n,i}+\Delta t\sum_{j=i+1}^{N}\hspace{-1mm}a_jb_{i,j}u_{D}^{n,j}+\Delta t\sum_{j=0}^{I_h-1}\hspace{-1mm}A_j\tilde{B}_{i,j}u_{C}^{n,j}
\Delta x_{j}\hspace{2mm}\text{for}\hspace{1.5mm}\left\lbrace \begin{array}{l} \hspace{.5mm}i=1,\dots,N, \\\hspace{-.5mm} n=0,1,\dots,M-1. \end{array}\right.
\end{equation}
The initial values for the discrete approximation are simply given by the initial condition vector $d_0$, so that $u_{D}^{0,i}={d_0}_i$
for $i=1,\ldots, N$. Then our approximations $u_{Di}^h(t)$ to $u_{Di}^R(t)$ for $t \in [0,T)$ are given by
\begin{equation}\label{equation4125}
u_{Di}^h(t)=\sum_{n=0}^{M-1} \chi_{\tau_n}(t)u_{D}^{n,i}\hspace{3mm}\text{for}\hspace{2mm}i=1,2,\ldots,N.
\end{equation}

\section{Properties of Numerical Solutions: Nonnegativity and Mass Conservation}

In the article \cite{baird18} we proved the existence and uniqueness of a solution to our system \eqref{equation301} and \eqref{equation302}. This solution was shown to possess a number of properties that we would expect given the physical nature of the model, namely the solution preserved nonnegativity and conserved total mass. In the following sections we examine whether the approximate solution provided by \eqref{equation4105} and \eqref{equation4125}, also displays these properties. These properties, apart from being physically relevant, will also be utilised in the forthcoming proofs of the convergence of the approximations \eqref{equation4105} and \eqref{equation4125} to a solution to the system \eqref{equation40001} and \eqref{equation403}, and subsequently the uniqueness and differentiability of that solution.

\subsection{Nonnegativity of the Numerical Solution}

\begin{lemma}\label{lemma401} For a fixed partition $(x_{i-1/2})_{i=0}^{I_h}$, suppose that $\Delta t$ is sufficiently small
that the following condition is satisfied:
\begin{equation*}
0<\Delta t \leq \frac{x_i}{A_i\left(\sum_{k=0}^{i-1}x_{k}B_{k,i}\Delta x_{k}+\sum_{j=1}^{N}j\tilde{B}_{j,i}\right)},
\end{equation*}
for all $i \in \left\lbrace0,1,\dots,I_h-1\right\rbrace$ such that the denominator is nonzero, and
\begin{equation*}
0<\Delta t \leq \frac{1}{a_i},
\end{equation*}
for all $i \in \left \lbrace 2,\dots,N \right\rbrace$ such that $a_i\neq0$. Then, the approximate solutions defined
by \eqref{equation4105} and \eqref{equation4125} preserve nonnegativity.
\end{lemma}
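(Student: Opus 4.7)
The natural approach is a two-part induction on the time index $n$: assuming that every $u_C^{n,j}$ (for $j=0,\ldots,I_h-1$) and every $u_D^{n,i}$ (for $i=1,\ldots,N$) is nonnegative, I would establish nonnegativity at level $n+1$, starting from the approximated initial datum in \eqref{equation410}, which is clearly nonnegative since $c_0$ is. Since the discrete scheme \eqref{equation412} involves the continuous values $u_C^{n,j}$, it is sensible to handle the continuous step first and then the discrete step.

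For the continuous update \eqref{equation409}, the plan is to rewrite the difference of fluxes so that the contribution proportional to $u_C^{n,i}$ is isolated and combines cleanly with the sink. Expanding $F_{i+1/2}^n-F_{i-1/2}^n$ using the definition in \eqref{equation408}, splitting the $j=i$ term out of the inner sum in $F_{i-1/2}^n$ and reindexing $k$ in $F_{i+1/2}^n$, I expect the off-diagonal terms to cancel, leaving
\begin{equation*}
\frac{1}{\Delta x_i}\left(F_{i+1/2}^n-F_{i-1/2}^n\right)
= \sum_{j=i+1}^{I_h-1} x_i A_j B_{i,j}\,u_C^{n,j}\,\Delta x_j
\;-\; A_i u_C^{n,i}\sum_{k=0}^{i-1} x_k B_{k,i}\,\Delta x_k .
\end{equation*}
Substituting this and the sink \eqref{sink} back into \eqref{equation409} gives
\begin{equation*}
u_C^{n+1,i} = u_C^{n,i}\left[\,1-\frac{\Delta t A_i}{x_i}\!\left(\sum_{k=0}^{i-1} x_k B_{k,i}\,\Delta x_k+\sum_{j=1}^{N} j\tilde B_{j,i}\right)\right] + \Delta t\sum_{j=i+1}^{I_h-1} A_j B_{i,j}\,u_C^{n,j}\,\Delta x_j .
\end{equation*}
The CFL-type hypothesis on $\Delta t$ in the lemma is precisely what makes the bracketed coefficient nonnegative, and all remaining factors ($A_j,B_{i,j},\tilde B_{j,i},\Delta x_j,x_k,x_i$) are nonnegative by construction. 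Combined with the inductive hypothesis, this yields $u_C^{n+1,i}\ge 0$ for every $i$.

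For the discrete update \eqref{equation412}, I would simply rewrite it as
\begin{equation*}
u_D^{n+1,i} = (1-\Delta t\, a_i)\,u_D^{n,i} + \Delta t\sum_{j=i+1}^{N} a_j b_{i,j}\,u_D^{n,j} + \Delta t\sum_{j=0}^{I_h-1} A_j \tilde B_{i,j}\,u_C^{n,j}\,\Delta x_j.
\end{equation*}
The last two sums are nonnegative by the inductive hypothesis together with the nonnegativity of $a_j,b_{i,j},A_j,\tilde B_{i,j},\Delta x_j$ established in Section 3, and the leading coefficient $1-\Delta t\, a_i$ is nonnegative precisely because of the second smallness condition on $\Delta t$ (noting $a_1=0$, so the case $i=1$ is trivial). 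Closing the induction then delivers the claim for every $n=0,\ldots,M-1$, which in view of the piecewise-constant definitions \eqref{equation4105} and \eqref{equation4125} gives $u_C^h\ge 0$ and $u_{Di}^h\ge 0$ throughout $(N,R)\times[0,T)$.

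The only subtle step is the algebraic rearrangement of the flux difference: everything afterwards is essentially a bookkeeping check that the CFL conditions were chosen exactly to absorb the negative coefficients. I would take some care to verify the boundary contributions $F_{-1/2}^n=F_{I_h-1/2}^n=0$ (so the identity above remains correct at $i=0$ and $i=I_h-1$ with empty sums interpreted as zero) and to confirm that the splitting of the double sum really leaves no residual terms.
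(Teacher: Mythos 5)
Your proposal is correct and follows essentially the same route as the paper: the same telescoping rearrangement of $F_{i+1/2}^n-F_{i-1/2}^n$ isolating the $u_C^{n,i}$ coefficient, the same resulting convex-combination form whose leading bracket is made nonnegative by the first CFL condition, and the same direct reading of \eqref{equation412} for the discrete regime. The only difference is presentational — you make the induction on $n$ and the boundary cases $i=0$, $i=I_h-1$ explicit, which the paper leaves implicit.
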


\begin{proof} Starting with the approximation for the continuous regime, let us consider equation \eqref{equation409}. By cancelling common terms we
get that
\begin{align}\label{equation413}
&F_{i+1/2}^n-F_{i-1/2}^n=x_{i}\Delta x_{i}\sum_{j=i+1}^{I_h-1}A_jB_{i,j}u_{C}^{n,j}\Delta x_{j}-A_iu_{C}^{n,i}\Delta x_{i}\sum_{k=0}^{i-1}x_{k}B_{k,i}
\Delta x_{k}
\end{align}
for $i=1,\dots,I_h-2$. Therefore we have

\begin{align*}
&\frac{\Delta t}{x_i\Delta x_{i}}(F_{i+1/2}^n-F_{i-1/2}^n)-\frac{\Delta t}{x_i}S_i\\
&=\Delta t\sum_{j=i+1}^{I_h-1}A_jB_{i,j}u_{C}^{n,j}\Delta x_{j}-\frac{\Delta t}{x_i}A_iu_{C}^{n,i}\sum_{k=0}^{i-1}x_{k}B_{k,i}\Delta x_{k}
-\frac{\Delta t}
{x_i}A_iu_{C}^{n,i}\sum_{j=1}^{N}j\tilde{B}_{j,i} \\
&=\Delta t\sum_{j=i+1}^{I_h-1}A_jB_{i,j}u_{C}^{n,j}\Delta x_{j}-\frac{\Delta t}{x_i}A_iu_{C}^{n,i}\left(\sum_{k=0}^{i-1}x_{k}B_{k,i}\Delta x_{k}+
\sum_{j=1}^{N}j\tilde{B}_{j,i}\right).
\end{align*}\\
Substituting this into \eqref{equation409} gives us
\begin{align}\label{equation901}
u_{C}^{n+1,i}&=\left(1-\frac{\Delta t}{x_i}A_i\left(\sum_{k=0}^{i-1}x_{k}B_{k,i}\Delta x_{k}+\sum_{j=1}^{N}j\tilde{B}_{j,i}\right)\right)u_{C}^{n,i}
+\Delta t\sum_{j=i+1}^{I_h-1}A_jB_{i,j}u_{C}^{n,j}\Delta x_{j},
\end{align}
for $i=1,\dots,I_h-2$. The cases $i=0$ and $i=I_h-1$ can be handled similarly to obtain the same result, where the empty sums are taken
as 0.

From this it is clear that if each of the approximations $u_{C}^{n,i}$ is nonnegative, and provided $\Delta t$ is sufficiently small
such that the term within the outer brackets is nonnegative, then each of the approximations $u_{C}^{n+1,i}$, for the subsequent time step, will also be
nonnegative. Hence to ensure the approximations $u_{C}^{n+1,i}$ are nonnegative we can take\\
\begin{equation}\label{equation414}
0<\Delta t \leq \frac{x_i}{A_i\left(\sum_{k=0}^{i-1}x_{k}B_{k,i}\Delta x_{k}+\sum_{j=1}^{N}j\tilde{B}_{j,i}\right)}\hspace{3mm}\text{for}
\hspace{2mm}i=0,1,\dots,I_h-1.
\end{equation}
In the case of the above denominator being zero for some $i$, such that the bound \eqref{equation414} is undefined, then $u_{C}^{n+1,i}$ can be seen from \eqref{equation901} to automatically satisfy the nonnegativity requirement, for any value of
$\Delta t$.

Turning to the approximation for the discrete regime, it is immediately clear from the form of \eqref{equation412} that if all of the
values $u_{C}^{n,i}$ and $u_{D}^{n,i}$ are nonnegative, then each $u_{D}^{n+1,i}$ will be nonnegative if for each $i=1,\dots,N$ we have
that $1-\Delta ta_i$ is nonnegative. This can be ensured by taking
\begin{equation}\label{equation415}
0<\Delta t \leq \frac{1}{a_i} \hspace{3mm} \text{for}\hspace{2mm}i=2,\dots,N \hspace{2mm} \text{such that} \hspace{2mm} a_i\neq0.
\end{equation}
Therefore if we choose a $\Delta t$ small enough that both \eqref{equation414} and \eqref{equation415} are satisfied, then our approximate
solutions will remain nonnegative.
\end{proof}

From now on we shall assume that conditions \eqref{equation414} and \eqref{equation415} are satisfied and that
$c_0(x)\geq 0$ and each $d_{0,i}\geq 0$ so that our approximations remain nonnegative.\\

\begin{remark}\label{remark401}The bound \eqref{equation414} is dependent on the mesh and it is perhaps not immediately apparent how this bounding value might vary as we refine the mesh. In particular, it would be advantageous to confirm that it is indeed possible to find a constant $k_1$, such that conditions \eqref{equation404} and \eqref{equation414} can be satisfied simultaneously, whilst $h\searrow0$. In the upcoming analysis we will place restrictions on the functions $a$ and $b$; these constraints will allow us to guarantee the existence of such a $k_1$.

The upcoming Theorem~\ref{theorem501} imposes the restriction $a,b\in L_{\infty}$ on the restricted domains $[N,R]$ and $[N,R]\times[N,R]$ respectively, with $\alpha(R)$ and $\beta(R)$ being the essential suprema for $a$ and $b$ on said domains. This being the case, we have $A_i\leq \alpha(R)$ and $B_{k,i}\leq\beta(R)$ for all values of $i$ and $k$ admissible in \eqref{equation414}. Furthermore, from \eqref{equation303} we may deduce that each $b_i(y)\leq y$, hence $\tilde{B}_{j,i}\leq R$. Finally, all mesh midpoints $x_i$ must clearly satisfy $x_i\geq N\geq1>h$. Taken together, these bounds lead, via a simple calculation, to
\begin{align*}
\frac{h}{\alpha(R)\left(\beta(R)R\left(R-N\right)+RN(N+1)/2\right)}
\leq \frac{x_i}{A_i\left(\sum_{k=0}^{i-1}x_{k}B_{k,i}\Delta x_{k}+\sum_{j=1}^{N}j\tilde{B}_{j,i}\right)},
\end{align*}
for $i=0,1,\dots,I_h-1$. Hence, we have established a possible value for $k_1$, which ensures \eqref{equation404} and \eqref{equation414} can be satisfied simultaneously as $h\searrow0$.
\end{remark}

\subsection{Mass Conservation by the Numerical Solutions}

In \cite[Lemma 6.2]{baird18}, the exact solution to our system of equations \eqref{equation301} and \eqref{equation302} was shown to conserve mass between the two regimes. We now show that this property is shared by our numerical solutions.

\begin{lemma}\label{lemma402} The approximate solutions generated by \eqref{equation409} and \eqref{equation412} conserve mass.
\end{lemma}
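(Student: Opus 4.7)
The plan is to show that the discrete total mass functional
\[ M^n := \sum_{i=0}^{I_h-1} x_i u_C^{n,i} \Delta x_i + \sum_{i=1}^{N} i\, u_D^{n,i} \]
is independent of $n$. I would prove this by establishing $M^{n+1}-M^n=0$ directly from the update relations \eqref{equation409} and \eqref{equation412}, showing that the three kinds of terms that appear (boundary flux, continuous-to-discrete sink, internal discrete fragmentation) each either vanish by telescoping or cancel between the two regimes.

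First I would multiply \eqref{equation409} through by $x_i\Delta x_i$ and sum over $i=0,\dots,I_h-1$. The flux contribution is $\Delta t\sum_i(F_{i+1/2}^n-F_{i-1/2}^n)$, which telescopes to $F_{I_h-1/2}^n-F_{-1/2}^n=0$ by the boundary prescription (justified via Lemma~\ref{lastthing}). This leaves
\[ \sum_{i=0}^{I_h-1} x_i\Delta x_i\bigl(u_C^{n+1,i}-u_C^{n,i}\bigr) = -\Delta t\sum_{i=0}^{I_h-1}\Delta x_i S_i^n = -\Delta t\sum_{j=1}^{N} j \sum_{i=0}^{I_h-1} A_i \tilde{B}_{j,i} u_C^{n,i}\Delta x_i, \]
after interchanging the order of summation in the definition \eqref{sink} of $S_i^n$.

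Next I would multiply \eqref{equation412} by $i$ and sum over $i=1,\dots,N$. The fragmentation double sum is rearranged by swapping the order of summation,
\[ \sum_{i=1}^{N} i \sum_{j=i+1}^{N} a_j b_{i,j} u_D^{n,j} = \sum_{j=2}^{N} a_j u_D^{n,j} \sum_{i=1}^{j-1} i b_{i,j} = \sum_{j=2}^{N} j a_j u_D^{n,j}, \]
where the inner sum collapses by virtue of the discrete mass-conservation condition \eqref{equation304}. Combined with $a_1=0$, this cancels the $-\Delta t\sum_i i a_i u_D^{n,i}$ contribution exactly, leaving only
\[ \sum_{i=1}^{N} i\bigl(u_D^{n+1,i}-u_D^{n,i}\bigr) = \Delta t \sum_{i=1}^{N} i \sum_{j=0}^{I_h-1} A_j \tilde{B}_{i,j} u_C^{n,j}\Delta x_j. \]
This expression is identically the positive of the sink contribution computed above (after relabelling the dummy indices $i\leftrightarrow j$). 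Adding the two identities gives $M^{n+1}-M^n=0$, and iterating yields $M^n=M^0$ for all $n$, which is the mass conservation statement for the approximate solutions \eqref{equation4105} and \eqref{equation4125}.

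There is no real obstacle here; the argument is an exercise in careful summation-by-parts. The only subtlety to watch is the convention that empty sums vanish (so that $i=I_h-1$, $i=0$ in the continuous update and $i=N$ in the discrete update are handled uniformly), and the bookkeeping of the boundary values $F_{-1/2}^n=F_{I_h-1/2}^n=0$, which is exactly what was prepared by Lemma~\ref{lastthing} and built into the scheme definition.
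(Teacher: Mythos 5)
Your proposal is correct and follows essentially the same route as the paper's proof: telescoping of the numerical flux with the zero boundary values, cancellation of the internal discrete fragmentation terms via the mass-conservation condition \eqref{equation304} (together with $a_1=0$), and exact matching of the continuous-regime sink contribution with the discrete-regime gain term after relabelling indices. No gaps.
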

\begin{proof} The mass associated with the approximate continuous regime solution, $u_C^h(x,t)$, is given by
\begin{align}\label{equation416}
\|u^h_C(\cdot,t)\|_{L_{1}^1(N,R)}&=\int_N^R\sum_{n=0}^{M-1} \sum_{i=0}^{I_h-1} \chi_{\Lambda_i}(x)\chi_{\tau_n}(t)u_{C}^{n,i}\,x\dd x\nonumber\\
&=\sum_{n=0}^{M-1}\chi_{\tau_n}(t) \sum_{i=0}^{I_h-1} u_{C}^{n,i}\int_N^R \chi_{\Lambda_i}(x)\,x\dd x\nonumber\\
&=\sum_{n=0}^{M-1}\chi_{\tau_n}(t) \sum_{i=0}^{I_h-1}x_i \Delta x_i u_{C}^{n,i},
\end{align}
whilst the approximate solution $u_D^h(t)$ has associated mass given by

\[  \|u_{D}^h(t)\|_{X_D}=\sum_{i=1}^N i u_{Di}^h(t)= \sum_{i=1}^N i \sum_{n=0}^{M-1} \chi_{\tau_n}(t)u_{D}^{n,i} =\sum_{n=0}^{M-1}
\chi_{\tau_n}(t)  \sum_{i=1}^N i u_{D}^{n,i}.\]
Summing these two expressions gives the total mass:
\begin{equation}\label{equation417}
M^h(t)=\sum_{n=0}^{M-1}\chi_{\tau_n}(t)\left( \sum_{i=0}^{I_h-1}x_i \Delta x_i u_{C}^{n,i}+  \sum_{i=1}^N i u_{D}^{n,i}\right).
\end{equation}
First let us examine the mass accounted for by the continuous regime. From the relation \eqref{equation409} we get
\begin{align}\label{equation418}
&\sum_{i=0}^{I_h-1}x_iu_{C}^{n+1,i}\Delta x_{i}=\sum_{i=0}^{I_h-1}x_i\left(u_{C}^{n,i}+\frac{\Delta t}{x_i\Delta x_{i}}(F_{i+1/2}^n-F_{i-1/2}^n)-
\frac{\Delta t}{x_i}S_i^n\right)\Delta x_{i}\nonumber\\
&=\sum_{i=0}^{I_h-1}x_iu_{C}^{n,i}\Delta x_{i}+\Delta t\sum_{i=0}^{I_h-1}(F_{i+1/2}^n-F_{i-1/2}^n)-\Delta t\sum_{i=0}^{I_h-1}S_i^n\Delta x_{i}
\nonumber\\
&=\sum_{i=0}^{I_h-1}x_iu_{C}^{n,i}\Delta x_{i}-\Delta t\sum_{i=0}^{I_h-1}S_i^n\Delta x_{i}.
\end{align}
The middle summation term is lost in going to the final line as the sum is telescoping with zero end terms.
Now we consider the discrete regime mass; the generating relation \eqref{equation412} gives us
\begin{align}\label{equation419}
&\sum_{i=1}^{N}iu_{D}^{n+1,i}=\sum_{i=1}^{N}i\left((1-\Delta ta_i)u_{D}^{n,i}+\Delta t\sum_{j=i+1}^{N}a_jb_{i,j}u_{D}^{n,j}
+\Delta t\sum_{j=0}^{I_h-1}A_j
\tilde{B}_{i,j}u_{C}^{n,j}\Delta x_{j}\right)\nonumber\\
&=\sum_{i=1}^{N}iu_{D}^{n,i}-\Delta t\sum_{i=1}^{N}ia_iu_{D}^{n,i}+\Delta t\sum_{i=1}^{N}i\sum_{j=i+1}^{N}a_jb_{i,j}u_{D}^{n,j}+\Delta t
\sum_{i=1}^{N}i
\sum_{j=0}^{I_h-1}A_j\tilde{B}_{i,j}u_{C}^{n,j}\Delta x_{j}\nonumber\\
&=\sum_{i=1}^{N}iu_{D}^{n,i}-\Delta t\sum_{i=2}^{N}ia_iu_{D}^{n,i}
+\Delta t\sum_{j=2}^{N}a_ju_{D}^{n,j}\hspace{-1mm}\left(\sum_{i=1}^{j-1}ib_{i,j}\right)+\Delta t\hspace{-.2mm}\sum_{j=0}^{I_h-1}\Delta x_{j}\hspace{-.2mm}\left(\hspace{-.2mm}A_ju_{C}^{n,j}\sum_{i=1}^{N}i\tilde{B}_{i,j}
\hspace{-.2mm}\right)\nonumber\\
&=\sum_{i=1}^{N}iu_{D}^{n,i}+\Delta t\sum_{j=0}^{I_h-1}\Delta x_{j}S_j^n.
\end{align}
The middle two terms cancel due to the mass conservation condition \eqref{equation304}. Combining equations \eqref{equation418} and
\eqref{equation419} we obtain
\begin{align*}
&\sum_{i=0}^{I_h-1}x_iu_{C}^{n+1,i}\Delta x_{i}+\sum_{i=1}^{N}iu_{D}^{n+1,i}\\
&=\sum_{i=0}^{I_h-1}x_iu_{C}^{n,i}\Delta x_{i}-\Delta t\sum_{i=0}^{I_h-1}S_i^n\Delta x_{i}+\sum_{i=1}^{N}iu_{D}^{n,i}+\Delta t\sum_{j=0}^{I_h-1}
\Delta x_{j}S_j^n\\
&=\sum_{i=0}^{I_h-1}x_iu_{C}^{n,i}\Delta x_{i}+\sum_{i=1}^{N}iu_{D}^{n,i}.
\end{align*}
From repeated application of this equality it is easily seen that the bracketed expression appearing in \eqref{equation417} is equal for all values
of $n$, and hence the total mass $M^h(t)$ remains constant.
\end{proof}

\section{Convergence of the Scheme to a Weak Solution as  \texorpdfstring{$h\rightarrow 0$}{}}
Having determined the nonnegativity and mass conservative properties of the approximate solutions provided by \eqref{equation4105} and \eqref{equation4125}, in this section we set out to prove that they
converge, in some sense, to a limit as the parameter $h$, and by necessity $\Delta t$, go to zero, and show that this limit itself is an `exact' solution to our truncated model.

\subsection{Continuous Fragmentation Regime: Convergence}
Let us start with the continuous regime approximations $\left\lbrace u^h_C \right \rbrace$. In order to prove the (weak)
convergence of this family, we employ a weak compactness argument, utilising the Dunford--Pettis theorem (Theorem~\ref{theorem205}),
which provides necessary and sufficient conditions for weak compactness in an $L_{1}$ space. We begin by proving the equiboundedness of
the set $\left\lbrace u^h_C \right \rbrace$.

\begin{lemma}\label{lemma501}The family of approximations $\left\lbrace u_C^h\right\rbrace$ is equibounded (uniformly bounded) in the space $L_1((N,R) \times [0,T),x \dd x \dd t)$.
\end{lemma}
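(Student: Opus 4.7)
The plan is to combine the mass conservation identity from Lemma~\ref{lemma402} with the nonnegativity guarantee from Lemma~\ref{lemma401} to get a pointwise-in-$t$ bound on $\|u_C^h(\cdot,t)\|_{L_1^1(N,R)}$ that is independent of $h$, and then integrate in $t$ to obtain the $L_1((N,R)\times[0,T),x\dd x\dd t)$ bound.

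First I would use nonnegativity (Lemma~\ref{lemma401}) to drop the absolute value in the norm and appeal to the identity~\eqref{equation416}, which gives
\[
\|u_C^h(\cdot,t)\|_{L_1^1(N,R)}=\sum_{n=0}^{M-1}\chi_{\tau_n}(t)\sum_{i=0}^{I_h-1}x_i\Delta x_i u_C^{n,i}.
\]
By Lemma~\ref{lemma402} the total mass $M^h(t)$ in~\eqref{equation417} is constant in $n$, and since each $u_D^{n,i}\geq 0$ we may discard the discrete contribution to obtain
\[
\sum_{i=0}^{I_h-1}x_i\Delta x_i u_C^{n,i}\leq \sum_{i=0}^{I_h-1}x_i\Delta x_i u_C^{0,i}+\sum_{i=1}^N i\,d_{0,i}
\]
for every $n$.

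Next I would estimate the initial continuous mass in a mesh-independent way using the definition~\eqref{equation410}. Because $x_i$ is the midpoint of $\Lambda_i$, one has $x_i\leq x+h/2$ for $x\in\Lambda_i$, so
\[
\sum_{i=0}^{I_h-1}x_i\Delta x_i u_C^{0,i}=\sum_{i=0}^{I_h-1}x_i\int_{\Lambda_i}c_0(x)\dd x\leq \|c_0\|_{L_1^1(N,R)}+\tfrac{h}{2}\|c_0\|_{L_1(N,R)},
\]
which is uniformly bounded in $h\in(0,1)$. Denoting the resulting constant by $C_0$, the pointwise bound $\|u_C^h(\cdot,t)\|_{L_1^1(N,R)}\leq C_0+\sum_{i=1}^N i\,d_{0,i}=:K$ holds for a.e.\ $t\in[0,T)$, and integrating in $t$ yields
\[
\|u_C^h\|_{L_1((N,R)\times[0,T),x\dd x\dd t)}\leq TK,
\]
with the right-hand side independent of $h$ and $\Delta t$.

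There is no real obstacle here: the conservation law has already done the heavy lifting, and the only thing one must check carefully is that the discretised initial datum~\eqref{equation410} has its weighted sum controlled by $\|c_0\|_{L_1^1(N,R)}$ up to a vanishing $O(h)$ correction, which is a short, standard midpoint estimate. Everything else is a direct appeal to results established earlier in the paper.
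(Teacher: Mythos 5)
Your proof is correct and follows essentially the same route as the paper: nonnegativity plus the discrete mass balance of Section 4 yields a $t$-uniform bound on $\|u_C^h(\cdot,t)\|_{L_1^1(N,R)}$ in terms of the initial data, which is then integrated in $t$. The only cosmetic differences are that the paper uses the monotone decay of the continuous-regime mass in \eqref{equation418} directly (so the discrete initial mass never enters and the constant is $\tfrac{3}{2}\|c_0\|_{L_1^1(N,R)}$), and it controls the initial sum via the multiplicative bound $x_i/x_{i-1/2}\leq 3/2$ rather than your additive $x_i\leq x+h/2$; both estimates are equally valid.
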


\begin{proof}
Recalling equation \eqref{equation416}, we have for any $t\in [0,T)$ that
\begin{align*}
\|u^h_C(\cdot,t)\|_{L_{1}^1(N,R)}&=\sum_{n=0}^{M-1}\chi_{\tau_n}(t) \sum_{i=0}^{I_h-1}x_i \Delta x_i u_{C}^{n,i}.
\end{align*}
From the analysis of Lemma~\ref{lemma401}, each of the values $u_{C}^{n,i}$ is nonnegative, and as such the values $S_i^n$ are nonnegative. Therefore, from the last line of equation \eqref{equation418} we deduce that
\[\sum_{i=0}^{I_h-1}x_i \Delta x_i u_{C}^{n,i}\leq \sum_{i=0}^{I_h-1}x_i \Delta x_iu_{C}^{n-1,i}\hspace{3mm}\text{for}\hspace{2mm}n=1,\ldots,M-1.\]

Repeated application of this inequality yields
\begin{equation}\label{equation501}
\sum_{i=0}^{I_h-1}x_i \Delta x_i u_{C}^{n,i}\leq \sum_{i=0}^{I_h-1}x_i \Delta x_i u_{C}^{0,i}=\sum_{i=0}^{I_h-1}x_i \int_ {\Lambda_{i}} c_0(x)\dd x\leq\sum_{i=0}^{I_h-1}\frac{x_i}{x_{i-1/2}} \int_ {\Lambda_{i}} c_0(x)\,x\dd x.
\end{equation}
The quantity $\frac{x_i}{x_{i-1/2}}$ can be bounded as follows:
\begin{equation*}
\frac{x_i}{x_{i-1/2}}=\frac{x_{i-1/2}+\frac{1}{2}\Delta x_i}{x_{i-1/2}}\leq 1+ \frac{h}{2N}\leq \frac{3}{2}.
\end{equation*}
We note this bound as it will appear regularly in subsequent calculations. Substituting this within \eqref{equation501} yields
\[\sum_{i=0}^{I_h-1}x_i \Delta x_i u_{C}^{n,i}\leq\frac{3}{2}\|c_0\|_{L_{1}^1(N,R)},\]
for $n=0,\ldots,M-1$. Replacing this inequality in our calculation gives us the following, which holds for all $t\in [0,T)$:
\begin{align*}
\|u^h_C(\cdot,t)\|_{L_{1}^1(N,R)}&\leq\sum_{n=0}^{M-1}\chi_{\tau_n}(t)\frac{3}{2}\|c_0\|_{L_{1}^1(N,R)}= \frac{3}{2}\|c_0\|_{L_{1}^1(N,R)}.
\end{align*}
Integrating this inequality with respect to $t$ from $0$ to $T$ we obtain the required equiboundedness of $\left\lbrace u_C^h\right\rbrace$ in the space $L_1((N,R) \times [0,T),x\dd x\dd t)$.
\end{proof}

We now move on to prove the second of the two required conditions for the Dunford--Pettis theorem, namely equiintegrability. However, prior to this we collect all the conditions so far imposed on our model via the functions $a$, $b$ and $b_i$, and the initial distributions $c_0$ and $d_0$, and also on our mesh via the parameters $h$ and $\Delta t$.

\begin{remark}\label{conditions}Throughout the remaining analysis, the following conditions shall be assumed to be satisfied.
\begin{enumerate}
\item The initial mass distributions within the continuous and discrete regimes are nonnegative, that is $c_0(x)$ for all $x>N$ and $d_{0,i}\geq0$ for $i=1,\ldots,N$.
\item The functions $a$ and $b$ are nonnegative and belong to the spaces $L_{\infty,loc}\left([N,\infty)\right)$ and \\$L_{\infty,loc}\left([N,\infty)\times[N,\infty)\right)$, respectively.
\item Each of the functions $b_i$ is assumed to be nonnegative. This nonnegativity in conjunction with condition $2$ is sufficient to guarantee that $b_i\in L_{\infty,loc}\left([N,\infty)\right)$, as per Remark \ref{convremark1}.
\item There exist positive constants $k_1$ and $k_2$ such that the mesh parameters $h$ and $\Delta t$ satisfy
\[k_1h\leq \Delta t \leq k_2 h.\]
\item To ensure that the approximate solutions remain nonnegative, the time step $\Delta t$ is assumed to satify the following constraints:
\[0<\Delta t \leq \frac{x_i}{A_i\left(\sum_{k=0}^{i-1}x_{k}B_{k,i}\Delta x_{k}+\sum_{j=1}^{N}j\tilde{B}_{j,i}\right)}\hspace{3mm}\text{for}
\hspace{2mm}i=0,1,\dots,I_h-1,\]
for all cases of the denominator being nonzero, and
\[0<\Delta t \leq \frac{1}{a_i} \hspace{3mm} \text{for}\hspace{2mm}i=2,\dots,N \hspace{2mm} \text{such that} \hspace{2mm} a_i\neq0.\]
\item There exists a constant $\theta>0$ such that
\[K(R) \Delta t \leq \theta<1, \]
where $K(R)=\alpha(R)\beta(R) R$, with $\alpha(R)$ and $\beta(R)$ being the essential suprema for $a$ and $b$ on the restricted domains $[N,R]$ and $[N,R]\times[N,R]$, respectively.
\end{enumerate}
\end{remark}

\begin{theorem}\label{theorem501} Under the assumptions outlined in Remark \ref{conditions}, the family $\left\lbrace u_C^h\right\rbrace$ is equiintegrable in $L_1((N,R) \times[0,T),x\dd x \dd t)$.
\end{theorem}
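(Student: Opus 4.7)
The plan is to combine de la Vall\'ee Poussin's theorem with a discrete Gronwall estimate on a $\Phi$-weighted functional. Since $c_0$ has finite first moment (as implicitly used in Lemma~\ref{lemma501}), the singleton $\{c_0\}$ is trivially equiintegrable in $L_1((N,R),x\dd x)$, so Theorem~\ref{theorem204} supplies a convex $\Phi\in C^\infty([0,\infty))$ with $\Phi(0)=0$, $\Phi'(0)=1$, $\Phi'$ concave, $\Phi(r)/r\to\infty$, and $\int_N^R\Phi(c_0(x))\,x\dd x<\infty$. Fixing this $\Phi$ and setting $E_n:=\sum_{i=0}^{I_h-1}x_i\Delta x_i\,\Phi(u_C^{n,i})$, the piecewise constant structure of $u_C^h$ gives $\int_0^T\!\!\int_N^R x\,\Phi(u_C^h(x,t))\dd x\dd t=\Delta t\sum_{n=0}^{M-1}E_n$. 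Hence it suffices to show that $E_n$ is bounded uniformly in $n$ and $h$; the hypothesis of Theorem~\ref{theorem204} then holds and equiintegrability in $L_1((N,R)\times[0,T),x\dd x\dd t)$ follows.

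For the per-step estimate, convexity gives $\Phi(u_C^{n+1,i})-\Phi(u_C^{n,i})\le\Phi'(u_C^{n+1,i})(u_C^{n+1,i}-u_C^{n,i})$. Substituting the update~\eqref{equation901} and discarding the nonpositive loss contribution (valid because $\Phi'\ge0$ by Lemma~\ref{lemma203}(ii) and the kernels are nonnegative) leaves
\[
\Phi(u_C^{n+1,i})-\Phi(u_C^{n,i})\le\Delta t\sum_{j=i+1}^{I_h-1}A_jB_{i,j}\Delta x_j\,\Phi'(u_C^{n+1,i})\,u_C^{n,j}.
\]
Lemma~\ref{lemma203}(i) then decouples the product via $\Phi'(u_C^{n+1,i})u_C^{n,j}\le\Phi(u_C^{n+1,i})+\Phi(u_C^{n,j})$.

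Multiplying by $x_i\Delta x_i$ and summing over $i$ produces two terms. The $\Phi(u_C^{n+1,i})$ piece uses $A_j\le\alpha(R)$, $B_{i,j}\le\beta(R)$, $\sum_{j>i}\Delta x_j\le R$ and is bounded by $K(R)\Delta t\,E_{n+1}$. The $\Phi(u_C^{n,j})$ piece, after interchanging the order of summation, requires estimating the inner sum $\sum_{i<j}x_iB_{i,j}\Delta x_i$. Using $x_i\le\tfrac{3}{2}x$ for $x\in\Lambda_i$ (the estimate from Lemma~\ref{lemma501}), unfolding the definition of $B_{i,j}$, and inserting the mass-conservation identity~\eqref{equation303} under the $y$-integral yields $\sum_{i<j}x_iB_{i,j}\Delta x_i\le\tfrac{3}{2}x_j$; this piece is then controlled by $\tfrac{3}{2}\alpha(R)\Delta t\,E_n$. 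Combining these,
\[
E_{n+1}-E_n\le K(R)\Delta t\,E_{n+1}+\tfrac{3}{2}\alpha(R)\Delta t\,E_n.
\]

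Condition~6 of Remark~\ref{conditions} ($K(R)\Delta t\le\theta<1$) absorbs the $E_{n+1}$ term via $(1-K(R)\Delta t)^{-1}\le 1+K(R)\Delta t/(1-\theta)$, yielding $E_{n+1}\le(1+C\Delta t)E_n$ for a constant $C=C(R,\theta)$ that remains bounded as $\Delta t\searrow0$. Iterating gives $E_n\le E_0\,e^{CT}$ uniformly in $h$. Finally, Jensen's inequality applied to $u_C^{0,i}=\tfrac{1}{\Delta x_i}\int_{\Lambda_i}c_0\dd x$, together with $x_i\le\tfrac{3}{2}x$ on $\Lambda_i$, yields $E_0\le\tfrac{3}{2}\int_N^R\Phi(c_0(x))\,x\dd x<\infty$ by construction of $\Phi$. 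The main obstacle is the third paragraph: both the clean transfer of the continuous conservation law~\eqref{equation303} to the discrete object $\sum_{i<j}x_iB_{i,j}\Delta x_i$, and setting things up so that the $E_{n+1}$ on the right-hand side can be absorbed by condition~6 rather than leaving a Gronwall constant that deteriorates as $\Delta t\searrow0$.
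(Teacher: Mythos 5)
Your proof is correct and follows essentially the same route as the paper: de la Vall\'ee Poussin applied to the singleton $\{c_0\}$, the convexity inequality evaluated at $\Phi'(u_C^{n+1,i})$, Lemma~\ref{lemma203}(i) to decouple the product, and a discrete Gronwall iteration closed by condition~6 of Remark~\ref{conditions} together with Jensen's inequality for $E_0$. The only (harmless) divergence is in bounding the $\Phi(u_C^{n,j})$ contribution: you invoke the mass-conservation identity~\eqref{equation303} to obtain $\sum_{i<j}x_iB_{i,j}\Delta x_i\le\tfrac{3}{2}x_j$, whereas the paper simply replaces $x_i$ by $x_j$ (valid since $j>i$) and uses $B_{i,j}\le\beta(R)$ and $\sum_i\Delta x_i\le R$; both give a per-step constant that remains bounded as $h\searrow0$, so the conclusions coincide.
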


\begin{proof}
Consider the constant sequence comprising solely of the initial data $c_0\in L_{1}\left((N,R),x\dd x\right)$. Clearly this sequence is
convergent, therefore $\left\lbrace c_0\right\rbrace$ forms a weakly sequentially compact set in $L_{1}\left((N,R),x\dd x\right)$. Hence
by the de la Vallee Poussin theorem (Theorem~\ref{theorem204}) there exists a nonnegative, convex function $\Phi \in C^\infty ([0,\infty))$, with $\Phi(0)=0$ and $\Phi'(0)=1$ such that $\Phi'$ is concave and satisfies
\begin{equation*}
\frac{\Phi(x)}{x}\rightarrow\infty\hspace{2mm}\text{as}\hspace{2mm}x\rightarrow\infty \hspace{3mm}\text{and}\hspace{3mm}\int_{N}^R\Phi(c_0)(x)\,x\dd x
<\infty.
\end{equation*}
A standard inequality \eqref{equation203}, for $C^1$ convex functions gives us
\begin{equation*}
\Phi(u_{C}^{n+1,i})-\Phi(u_{C}^{n,i})\leq\left(u_{C}^{n+1,i}-u_{C}^{n,i}\right)\Phi'(u_{C}^{n+1,i}).
\end{equation*}
Multiplying this by $x_i \Delta x_i$ and summing over all $i$ gives
\begin{equation*}
\sum_{i=0}^{I_h-1}x_i\Delta x_i \left(\Phi\left(u_{C}^{n+1,i}\right)-\Phi\left(u_{C}^{n,i}\right) \right)
\leq \sum_{i=0}^{I_h-1}x_i \Delta x_i\left(\left(u_{C}^{n+1,i}-u_{C}^{n,i}\right)\Phi'(u_{C}^{n+1,i})\right).
\end{equation*}
Utilising equation~\eqref{equation409} we can rewrite this as
\begin{align}\label{equation504}
&\sum_{i=0}^{I_h-1}x_i\Delta x_i \left(\Phi\left(u_{C}^{n+1,i}\right)-\Phi\left(u_{C}^{n,i}\right)  \right)\nonumber\\
&\leq \sum_{i=0}^{I_h-1}x_i\Delta x_i \left(\frac{\Delta t}{x_i\Delta x_{i}}(F_{i+1/2}^n-F_{i-1/2}^n)-\frac{\Delta t}{x_i}S_i^n\right)
\Phi'\left(u_{C}^{n+1,i}\right).
\end{align}

Recalling the definition of $S_i^n$ from \eqref{sink}, we see that it must be nonnegative. Additionally, Lemma~\ref{lemma203}(ii) and Lemma~\ref{lemma401} give $\Phi'\left(u_{C}^{n+1,i}\right)\geq0$, hence we can drop the term involving $S_i^n$ from \eqref{equation504} and the inequality will still remain valid, giving us
\begin{equation*}
\sum_{i=0}^{I_h-1}x_i\Delta x_i \left(\Phi\left(u_{C}^{n+1,i}\right)-\Phi\left(u_{C}^{n,i}\right)  \right)
\leq \sum_{i=0}^{I_h-1} \Delta t(F_{i+1/2}^n-F_{i-1/2}^n)\Phi'\left(u_{C}^{n+1,i}\right).
\end{equation*}
With some easy modification, equation~\eqref{equation413} becomes the inequality
\[F_{i+1/2}^n-F_{i-1/2}^n\leq x_{i}\Delta x_{i}\sum_{j=i+1}^{I_h-1}A_jB_{i,j}u_{C}^{n,j}\Delta x_{j},\]
which, if placed in the previous inequality, results in

\begin{equation*}
\sum_{i=0}^{I_h-1}x_i \Delta x_i \left(\Phi\left(u_{C}^{n+1,i}\right)-\Phi\left(u_{C}^{n,i}\right)  \right)
\leq \Delta t\sum_{i=0}^{I_h-1}\sum_{j=i+1}^{I_h-1}x_i A_jB_{i,j}u_{C}^{n,j}\Delta x_{i}\Delta x_{j}\Phi'\left(u_{C}^{n+1,i}\right).
\end{equation*}

Utilising Lemma~\ref{lemma203}(i) with $x=u_{C}^{n,j}$ and $y=u_{C}^{n+1,i}$ and noting that the constants $\alpha(R)$ and $\beta(R)$ bound the average values $A_j$ and $B_{i,j}$, we get

\begin{align*}
&\sum_{i=0}^{I_h-1 }x_i\Delta x_i \left( \Phi\left(u_{C}^{n+1,i}\right)-\Phi\left(u_{C}^{n,i}\right)  \right)
\leq \alpha \beta \Delta t\sum_{i=0}^{I_h-1} \sum_{j=i+1}^{I_h-1}x_i\Delta x_{i} \Delta x_{j} u_{C}^{n,j}
\Phi'\left(u_{C}^{n+1,i}\right)\\
&\leq \alpha(R) \beta(R) \Delta t \left(\sum_{i=0}^{I_h-1} \left(x_i \Delta x_{i} \Phi\left(u_{C}^{n+1,i}\right)\sum_{j=i+1}^{I_h-1}\Delta x_j\right)+\sum_{i=0}^{I_h-1}\left( x_i \Delta x_{i}  \sum_{j=i+1}^{I_h-1} \Delta x_j \Phi(u_{C}^{n,j})\right)\right).
\end{align*}
As $j$ is restricted to be greater than $i$ we have $x_j>x_i$ for admissible $j$ and $i$. This allows us to switch $x_i$ for $x_j$ in the
second term and take this within the inner summation. Following this we expand the summation over $j$ to give
\begin{align*}
&\sum_{i=0}^{I_h-1 }x_i\Delta x_i \left( \Phi\left(u_{C}^{n+1,i}\right)-\Phi\left(u_{C}^{n,i}\right)  \right)\\
&\leq \alpha(R) \beta(R) \Delta t \left(\sum_{i=0}^{I_h-1} \left(x_i \Delta x_{i} \Phi\left(u_{C}^{n+1,i}\right)\sum_{j=i+1}^{I_h-1}\Delta x_j\right)+\sum_{i=0}^{I_h-1}\left(\Delta x_{i}  \sum_{j=i+1}^{I_h-1}x_j \Delta x_j \Phi(u_{C}^{n,j})\right)\right)\\
&\leq \underbrace{\alpha(R)\beta(R) R}_{=K(R)} \Delta t \left(\sum_{i=0}^{I_h-1} x_i \Delta x_{i} \Phi\left(u_{C}^{n+1,i}\right)
+ \sum_{j=0}^{I_h-1}x_j \Delta x_j \Phi(u_{C}^{n,j})\right).
\end{align*}

If we change the index variable from $j$ to $i$ in the second summation and re-arrange then we obtain
\begin{align*}
\left(1-K(R)\Delta t\right)\sum_{i=0}^{I_h-1} x_i\Delta x_{i}\Phi(u_{C}^{n+1,i})\leq \left(1+K(R)\Delta t\right)\sum_{i=0}^{I_h-1}x_i \Delta x_{i}
\Phi\left(u_{C}^{n,i}\right).
\end{align*}
Some further manipulations produce
\begin{align*}
\left(1-K(R)\Delta t\right)\sum_{i=0}^{I_h-1}x_i \Delta x_{i}\left(\Phi(u_{C}^{n+1,i})-\Phi(u_{C}^{n,i})\right)
\leq 2K(R)\Delta t\sum_{i=0}^{I_h-1}x_i \Delta x_{i}\Phi\left(u_{C}^{n,i}\right).
\end{align*}
By the final assumption of Remark \ref{conditions}, we have $1-K(R)\Delta t>0$ allowing us to divide through to get
\begin{align*}
\sum_{i=0}^{I_h-1}x_i \Delta x_{i}\Phi(u_{C}^{n+1,i})\leq\left(1+\frac{2K(R)\Delta t}{1-K(R)\Delta t}\right)\sum_{i=0}^{I_h-1}x_i \Delta x_{i}
\Phi\left(u_{C}^{n,i}\right).
\end{align*}
Repeated application of this inequality yields
\begin{align*}
\sum_{i=0}^{I_h-1}x_i \Delta x_{i}\Phi(u_{C}^{n+1,i})&\leq\left(1+\frac{2K(R)\Delta t}{1-K(R)\Delta t}\right)^{n+1}\hspace{1mm}\sum_{i=0}^{I_h-1} x_i
\Delta x_{i}\Phi\left(u_{C}^{0,i}\right)\\
&\leq\exp\left(\frac{2K(R)\Delta t (n+1)}{1-K(R)\Delta t}\right)\sum_{i=0}^{I_h-1}x_i \Delta x_{i}\Phi\left(u_{C}^{0,i}\right).
\end{align*}
For values of $t$ in the interval $\tau_n=[t_n,t_{n+1})$ this gives us
\begin{align*}
\int_{N}^{R}\Phi \left(u^h_C(x,t)\right)\,x\dd x
&=\sum_{i=0}^{I_h-1} x_i\Delta x_{i}\Phi\left(u_{C}^{n,i}\right)\\
&\leq\exp\left(\frac{2K(R)t}{1-K(R)\Delta t} \right)\sum_{i=0}^{I_h-1}x_i \Delta x_{i}\Phi\left(u_{C}^{0,i}\right)\\
&=\exp\left(\frac{2K(R)t}{1-K(R)\Delta t} \right)\sum_{i=0}^{I_h-1} x_i \Delta x_{i}\Phi\left(\frac{1}{\Delta x_{i}}\int_ {\Lambda_{i}} c_0(x)\dd x
\right).
\end{align*}
An application of Jensen's inequality \cite[Theorem 2.2]{lieb01} allows us to switch the order of $\Phi$ and integration to get
\begin{align*}
\int_{N}^{R}\Phi \left(u^h_C(x,t)\right)\,x\dd x
&\leq\exp\left(\frac{2K(R)t}{1-K(R)\Delta t} \right)\sum_{i=0}^{I_h-1} x_i \int_ {\Lambda_{i}} \Phi( c_0(x))\dd x\\
&\leq\frac{3}{2}\exp\left(\frac{2K(R)t}{1-K(R)\Delta t} \right)\sum_{i=0}^{I_h-1} \int_ {\Lambda_{i}} \Phi( c_0(x))\,x\dd x.
\end{align*}
By assumption $6$ of Remark \ref{conditions}, that $K(R)\Delta t \leq \theta<1$, we deduce that
\begin{align*}
\int_{N}^{R} \Phi \left(u^h_C(x,t)\right)\,x\dd x
\leq\frac{3}{2}\exp\left(\frac{2K(R)t}{1-\theta}\right)\int_{N}^{R}\Phi( c_0(x))\,x\dd x,
\end{align*}
which holds for all $t\in[0,T)$. Integrating the inequality with respect to $t$ from $0$ to $T$ confirms the equiintegrability of the family $\left\lbrace u_C^h\right\rbrace$ in the space $L_1((N,R) \times [0,T),x\dd x\dd t)$.
\end{proof}

By Theorem~\ref{theorem205} (Dunford--Pettis theorem), the sequence $\left\lbrace u_C^h\right\rbrace$ forms a weakly sequentially compact set in the space $L_{1}\left((N,R)\times[0,T),x\dd x\dd t\right)$. This implies the existence of a subsequence $\left\lbrace u_C^{hj}\right\rbrace$ and a function $u_C^R\in L_{1}\left((N,R)\times[0,T),x\dd x\dd t\right)$ such that $u_C^{hj}\rightharpoonup u_C^R$ in $L_{1}\left((N,R)\times[0,T),x\dd x\dd t\right)$ as $j\rightarrow \infty$ and $h_{j}\rightarrow0$.\\

\begin{remark}\label{remark501} From now on this convergent subsequence will be considered implicitly, unless otherwise stated; as such we now use the notation $\left\lbrace u_C^h\right\rbrace$ to denote such a convergent subsequence, the choice of which, we note, may not be unique.
\end{remark}

\subsection{Continuous Fragmentation Regime: Weak Solution}

Having shown that our sequence of approximations converges (weakly) to a limit, we now aim to show that this limit provides a solution to our truncated equation \eqref{equation402}. Precisely, we intend to show that the function $u_C^R$ satisfies the following criterion.

\begin{definition}\label{definition501} The function $u_C^R$ is a weak solution of equation \eqref{equation402}, if it satisfies
\begin{align}\label{equation505}
&\int_{0}^{T}\int_{N}^{R} xu_C^R(x,t)\frac{\partial\varphi}{\partial t}(x,t) \dd x\dd t+\int_{N}^{R}xc_{0}(x)\varphi(x,0)\dd x\nonumber\\
&=\int_{0}^{T}\int_{N}^{R}\mathcal{F}^R\left(xu_{C}^R\right)\hspace{-.6mm}(x,t)\hspace{1mm}\frac{\partial\varphi}{\partial x}(x,t) \dd x\dd t
+\int_{0}^{T}\int_{N}^{R}S(xu_C^R)(x,t)\varphi(x,t)\dd x\dd t,
\end{align}
for all functions $\varphi$, which are twice continuously differentiable functions on $(N,R)\times(0,T)$, such that $\varphi$ and each of its derivatives up to order $2$ may be continuously extended to $[N,R]\times[0,T)$, and such that for each fixed $x\in[N,R]$, the support of $\varphi$ with respect to $t$ is a compact subset of $[0,T)$. We denote the set of such extended functions by $C_c^2\left([N,R]\times[0,T)\right)$. Finally, we note that the weak formulation \eqref{equation505} was obtained from \eqref{equation402} in the usual manner, recalling the zero boundary conditions established in Lemma~\ref{lastthing}.
\end{definition}

\begin{remark}\label{remark502} We now make note of a property of the function $\varphi$ and its derivatives, which we will make use of in our analysis. As $\varphi$ has compact support and is identically zero outwith this support, its derivatives, both first and second, must also be zero outwith the support. Now within this compact support, $\varphi$ and its derivatives are continuous and so must be bounded functions.
\end{remark}

\begin{definition}\label{definition502}
In the analysis which follows we make use of the following three approximations to $x$ over the domain $(N,R)$. First we have the left endpoint approximation, defined by
\[\xi^h:x \in (N,R)\rightarrow \xi^h(x)= \sum_{i=0}^{I_h-1} \chi_{\Lambda_i}(x)x_{i-1/2}.\]
Secondly we consider the midpoint approximation, defined by
\[X^h:x \in (N,R)\rightarrow X^h(x)= \sum_{i=0}^{I_h-1} \chi_{\Lambda_i}(x)x_i, \]
and finally we introduce the right endpoint approximation given by
\[\Xi^h:x \in (N,R)\rightarrow \Xi^h(x)= \sum_{i=0}^{I_h-1} \chi_{\Lambda_i}(x)x_{i+1/2}. \]
\end{definition}
\begin{remark}\label{convremark3}
It is a simple exercise to show that the three approximations, introduced above, converge point-wise (uniformly) to $x$ over the domain $(N,R)$ as the mesh parameter $h$ goes to $0$. The reader may find details given in \cite[Lemma 5.2.3]{bairdphd}.
\end{remark}

We are now in a position to proceed with our proof that $u_C^R$ is a weak solution to \eqref{equation402}.

\begin{definition}\label{definition503} Let $\varphi\in C_c^2\left([N,R]\times[0,T)\right)$ , then for sufficiently small $\Delta t$, the support of $\varphi$ with respect to $t$ lies within $[0,t_{M-1}]$. We define $\varphi_{i}^{n}$ as an approximation of $\varphi$ on $\Lambda_{i}\times \tau_n$ by
\[\varphi_i^n=\frac{1}{\Delta t}\int_{\tau_n}\varphi(x_{i-1/2},t)\dd t,\]
with $\varphi_{i}^{M-1}=\varphi_{i}^{M}=0$ for admissible $i$ and define $\varphi_{I_h}^{n}=0$ for all $n$.
\end{definition}

Rearranging equation \eqref{equation409}, multiplying by $\varphi_{i}^{n}$ and summing over $n=0,\ldots,M-1$ and $i=0,\ldots,I_h-1$, gives us
\begin{equation*}
\sum_{n=0}^{M-1} \sum_{i=0}^{I_h-1}\left(x_i\Delta x_{i}\left(u_{C}^{n+1,i}-u_{C}^{n,i}\right)\varphi_{i}^{n}-\Delta t(F_{i+1/2}^n-F_{i-1/2}^n)
\varphi_{i}^{n}+\Delta t\Delta x_{i}S_i^n\varphi_{i}^{n}\right)=0.
\end{equation*}
Rearrangement of the summations and utilising the compact support of $\varphi$ and the zero boundary flux gives us the following equality:
\begin{align}\label{equation506}
&\sum_{n=0}^{M-1} \sum_{i=0}^{I_h-1}x_i\Delta x_{i}u_{C}^{n+1,i}\left(\varphi_{i}^{n+1}-\varphi_{i}^{n}\right)
+ \sum_{i=0}^{I_h-1}x_i \Delta x_{i} u_{C}^{0,i}\varphi_{i}^{0}\nonumber\\
&-\sum_{n=0}^{M-1} \sum_{i=0}^{I_h-1}\Delta t F_{i+1/2}^n(\varphi_{i+1}^{n}-\varphi_{i}^{n})
-\sum_{n=0}^{M-1} \sum_{i=0}^{I_h-1}\Delta t\Delta x_{i}S_i^n\varphi_{i}^{n}
=0.
\end{align}
The above equality can be seen as the discrete equivalent of the weak formulation \eqref{equation505}. Our approach now involves taking the limit as $h\rightarrow 0$ of \eqref{equation506} and showing that we do indeed obtain \eqref{equation505} with $u_C^R$ as a weak solution. Observing the terms of \eqref{equation505} we see that the integrals are with respect to the measure $\ddn x\dd t$ whilst we have shown that convergence occurs in the space with weighted measure $x \dd x \dd t$. At this point we highlight the use of Theorem~\ref{theorem201} to switch spaces but retain convergence.

\begin{theorem}\label{theorem502}
Under the assumptions outlined in Remark \ref{conditions}, the function $u_C^R$ obtained as the limit of the sequence $\left\lbrace u_C^h \right\rbrace$, is a weak solution of our equation, satisfying \eqref{equation505}.
\end{theorem}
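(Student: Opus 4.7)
The strategy is to pass to the limit as $h \to 0$ in the discrete identity~\eqref{equation506}, matching each of its four terms with the corresponding term of~\eqref{equation505}. To set this up I would first express each summation as an integral on $(N,R)\times[0,T)$ against piecewise-constant functions built from the mesh, the kernel approximations $a^h$, $b^h$, $b_i^h$, the approximations $\xi^h$, $X^h$, $\Xi^h$ of $x$ from Definition~\ref{definition502}, and piecewise-constant representations of $\varphi$ and of its forward differences $(\varphi_{i+1}^n - \varphi_i^n)/\Delta x_i$ and $(\varphi_i^{n+1} - \varphi_i^n)/\Delta t$. The ingredients available are: weak $L_1$ convergence $u_C^h \rightharpoonup u_C^R$ in $L_1^1$ (equivalently in $L_1$ by Theorem~\ref{theorem201}), pointwise a.e.\ convergence with local $L_\infty$ bounds of the kernel approximations (Remark~\ref{convremark1}), uniform convergence of $\xi^h, X^h, \Xi^h$ to $x$ (Remark~\ref{convremark3}), strong $L_1$ convergence of the initial datum (Remark~\ref{convremark2}), and uniform convergence of the discrete averages and forward differences of $\varphi$ to $\varphi$, $\varphi_x$ and $\varphi_t$ owing to $\varphi \in C^2$. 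The workhorse is Theorem~\ref{theorem203}: the product of a weakly $L_1$-convergent sequence and a uniformly $L_\infty$-bounded pointwise-convergent sequence converges weakly in $L_1$, after which pairing with the constant $1$ on the finite-measure domain gives scalar convergence.

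The initial-condition and sink terms fall directly into this framework. The initial-condition term rewrites as $\int_N^R X^h(x) u_C^h(x,0) \varphi^{h,0}(x) \dd x$ and converges to $\int_N^R x c_0(x) \varphi(x,0) \dd x$ by combining strong $L_1$ convergence $u_C^h(\cdot,0) \to c_0$ with the uniform convergence $X^h \varphi^{h,0} \to x\varphi(x,0)$. The sink term rewrites as $\int_0^T \int_N^R a^h(z)\bigl(\sum_{j=1}^N j\, b_j^h(z)\bigr) \varphi^h(z,t) u_C^h(z,t) \dd z \dd t$; the multiplier of $u_C^h$ converges pointwise a.e.\ to $a(z)\sum_j j b_j(z)\varphi(z,t)$ and is uniformly $L_\infty$-bounded, using $\alpha(R)$, $\beta(R)$, the bound $\sum_j j b_j(z) \leq z \leq R$ extracted from~\eqref{equation303}, and boundedness of $\varphi$; Theorem~\ref{theorem203} then yields convergence to $\int_0^T \int_N^R S(xu_C^R)\varphi \dd x \dd t$ after noting $S(xu_C^R)(z) = a(z) u_C^R(z) \sum_j j b_j(z)$. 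For the time-derivative term I set $v^h(x,t) = u_C^{n+1,i}$ on $\Lambda_i \times \tau_n$, so the term becomes $\int_0^T \int_N^R X^h(x) v^h(x,t) D_t^h\varphi(x,t) \dd x \dd t$ with $X^h D_t^h\varphi \to x\varphi_t$ uniformly. Since $v^h$ is a time-shift of $u_C^h$ by $\Delta t$, it inherits the uniform bound of Lemma~\ref{lemma501}, and one checks by testing against continuous functions (and invoking the uniform $L_1^1$ bound) that its weak limit coincides with that of $u_C^h$; Theorem~\ref{theorem203} then produces $\int_0^T \int_N^R x u_C^R \varphi_t \dd x \dd t$.

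The flux term is the main obstacle and requires a telescoping manipulation. Interchanging summation order and using $\sum_{i=k}^{j-1}(\varphi_{i+1}^n - \varphi_i^n) = \varphi_j^n - \varphi_k^n$ one rewrites
\begin{equation*}
\sum_{n,i} \Delta t\, F_{i+1/2}^n(\varphi_{i+1}^n - \varphi_i^n) = \sum_n \Delta t \sum_{j=1}^{I_h-1} A_j u_C^{n,j} \Delta x_j \sum_{k=0}^{j-1} x_k B_{k,j} \Delta x_k(\varphi_j^n - \varphi_k^n),
\end{equation*}
which in integral form becomes $\int_0^T \int_N^R a^h(z) u_C^h(z,t) G^h(z,t) \dd z \dd t$ with
\begin{equation*}
G^h(z,t) = \int_N^R \chi_{\{y<z\}} X^h(y)\, b^h(y|z)\bigl(\widetilde\varphi_z^h(z,t) - \widetilde\varphi_y^h(y,t)\bigr) \dd y,
\end{equation*}
for suitable piecewise-constant approximations $\widetilde\varphi_z^h$, $\widetilde\varphi_y^h$ of $\varphi$ converging uniformly to $\varphi$. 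Dominated convergence in $y$ (with uniform bound $2\|\varphi\|_\infty R \beta(R)$ on the integrand) together with Remark~\ref{convremark1} gives $G^h(z,t) \to \int_N^z y b(y|z)(\varphi(z,t) - \varphi(y,t)) \dd y$ pointwise a.e., and $a^h G^h$ is uniformly $L_\infty$-bounded. Theorem~\ref{theorem203} then provides the weak $L_1$ limit of $a^h G^h u_C^h$, and pairing with $1$ yields $\int_0^T \int_N^R a(z) u_C^R(z,t) \int_N^z y b(y|z)(\varphi(z,t)-\varphi(y,t)) \dd y \dd z \dd t$. A direct Fubini computation on $\int_0^T \int_N^R \mathcal{F}^R(xu_C^R)\varphi_x \dd x \dd t$, using $\chi_{\{N<y<x<z<R\}}$ to reduce the $x$-integral to $\varphi(z,t) - \varphi(y,t)$, confirms that the two expressions coincide. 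Combining the four limits in~\eqref{equation506} delivers~\eqref{equation505}; the telescoping step is the essential move that recasts the discrete divergence structure as a well-behaved integral multiplier of $u_C^h$ which is both pointwise convergent and uniformly bounded, thereby permitting an application of the weak-convergence machinery.
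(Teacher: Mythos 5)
Your proposal is correct and follows the same overall strategy as the paper --- pass to the limit in the discrete identity~\eqref{equation506} term by term, using Theorem~\ref{theorem203} to handle products of the weakly convergent $u_C^h$ with uniformly bounded, pointwise convergent multipliers --- but your treatment of two of the four terms genuinely differs in execution. For the time-derivative term, the paper re-indexes the time integral so that $u_C^{n+1,i}$ is read as the value of $u_C^h$ on $\tau_{n+1}$ and the $\Delta t$-shift lands on the test function; you instead introduce the shifted sequence $v^h$ and identify its weak limit with $u_C^R$ by testing against continuous functions. That step is sound (a weak $L_1$ limit is determined by its action on continuous test functions, and the shift vanishes there by uniform continuity of $\varphi$ together with the uniform $L_1^1$ bound of Lemma~\ref{lemma501}), but it is an extra argument the paper's re-indexing avoids. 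The more substantial divergence is in the flux term: the paper keeps the flux in divergence form, uses the exact identity $\varphi_{i+1}^n-\varphi_i^n=\frac{1}{\Delta t}\int_{\tau_n}\int_{\Lambda_i}\varphi_x\dd x\dd t$ to write the term as $\int_0^T\int_N^R\mathcal{F}^h(u_C^h)\,\varphi_x\dd x\dd t$, and then runs a nested limit (Theorem~\ref{theorem203} in the inner $(v,t)$ variables for a.e.\ fixed $(x,w)$, followed by dominated convergence in $(x,w)$). Your summation by parts collapses the flux-times-difference into a single multiplier $a^h G^h$ of $u_C^h$, so that Theorem~\ref{theorem203} is applied only once, at the price of a final Fubini computation to verify that the limit $\int_0^T\int_N^R a(z)u_C^R(z,t)\int_N^z yb(y|z)(\varphi(z,t)-\varphi(y,t))\dd y\dd z\dd t$ equals $\int_0^T\int_N^R\mathcal{F}^R(xu_C^R)\varphi_x\dd x\dd t$, and of some bookkeeping to make the characteristic function $\chi_{\{y<z\}}$ match the mesh cells exactly (it should really be a condition on $\Xi^h(y)$ and $\xi^h(z)$ so that the discrete sum is reproduced verbatim). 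Both routes are legitimate; yours avoids the slightly delicate ``weak convergence for a.e.\ fixed outer variables'' step, while the paper's avoids the telescoping rearrangement and the terminal Fubini check. Your handling of the initial-condition and sink terms matches the paper's.
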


\begin{proof}
Looking initially at the first two terms of \eqref{equation506}, we can express them as follows:
\begin{align*}
&\sum_{n=0}^{M-1} \sum_{i=0}^{I_h-1}x_i\Delta x_{i}u_{C}^{n+1,i}\left(\varphi_{i}^{n+1}-\varphi_{i}^{n}\right)
+ \sum_{i=0}^{I_h-1}x_i \Delta x_{i} u_{C}^{0,i}\varphi_{i}^{0}\nonumber\\
&=\underbrace{\sum_{i=0}^{I_h-1}x_i\Delta x_{i}u_{C}^{M,i}\left(\varphi_{i}^{M}-\varphi_{i}^{M-1}\right)}_{=0}\\
&\hspace{5mm}+\sum_{n=0}^{M-2} \sum_{i=0}^{I_h-1}\int_{\tau_{n+1}} \int_{\Lambda_{i}}X^h(x)u^h_C(x,t)
\frac{\varphi( \xi^h(x),t)-\varphi( \xi^h(x),t-\Delta t)}{\Delta t}\dd x \dd t\\
&\hspace{5mm}+\sum_{i=0}^{I_h-1}\int_{\Lambda_{i}}X^h(x)u^h_C(x,0)\frac{1}{\Delta t}\int_{0}^{\Delta t}\varphi( \xi^h(x),t)\dd t \dd x\\
&=\int_{0}^{T} \int_{N}^{R} \chi_{[\Delta t,T)}(t) X^h(x)u^h_C(x,t)
\frac{\varphi( \xi^h(x),t)-\varphi( \xi^h(x),t-\Delta t)}{\Delta t}\dd x \dd t\\
&\hspace{5mm}+\int_{N}^{R}X^h(x)u^h_C(x,0)\frac{1}{\Delta t}\int_{0}^{\Delta t}\varphi( \xi^h(x),t)\dd t\dd x.
\end{align*}

Considering the first of the double integrals, let $(x,t)\in(N,R)\times(0,T)$, then in the case that $0<\Delta t \leq t$, Taylor expansions of the $\varphi$ terms about the point $(x,t)$ give us
\[\varphi( \xi^h(x),t)=\varphi(x,t)+(\xi^h(x)-x)\frac{\partial \varphi}{\partial x}(x,t)+O(h^2),\]
\[\varphi( \xi^h(x),t-\Delta t)=\varphi(x,t)+(\xi^h(x)-x)\frac{\partial \varphi}{\partial x}(x,t)
+(t-\Delta t-t)\frac{\partial \varphi}{\partial t}(x,t)+O(h^2,h\Delta t,\Delta t^2).\]
Simple cancellations and recalling the condition \eqref{equation404} relating $h$ and $\Delta t$ give us
\begin{equation}\label{equation507}
\chi_{[\Delta t,T)}(t)\frac{\varphi( \xi^h(x),t)-\varphi( \xi^h(x),t-\Delta t)}{\Delta t}=\left\lbrace \begin{array}{cc}&\hspace{-5mm}\dfrac{\Delta t\frac{\partial \varphi}{\partial t}(x,t)+O(\Delta t^2)}{\Delta t}, \hspace{6mm}\Delta t \leq t\\ \vspace{-3mm}\\&\hspace{10.5mm}0,\hspace{21.2mm}\Delta t > t\end{array}\right..
\end{equation}
The expression on the left-hand side of \eqref{equation507} can thus be seen to converge pointwise to $\varphi_t$ on $(N,R)\times(0,T)$ as the mesh size goes to 0. Furthermore, since the derivatives of $\varphi$ are bounded as per Remark \ref{remark502}, we can bound the left-hand side of \eqref{equation507} on $(N,R)\times(0,T)$, with a bound that is uniform w.r.t $h$, as $h\searrow 0$. As noted in Remark \ref{convremark3}, the functions $X^h(x)$ converge pointwise to $x$ on $(N,R)$ as $h\searrow 0$, and are clearly bounded by $R$ for all values of $x$ and $h$. Noting that $(N,R)\times[0,T) \backslash (N,R)\times(0,T)$ is of measure $0$ with respect to the measure $\dd x \dd t$, we see that the terms accompanying $u_C^h$ in our double integral satisfy the conditions for $\left\lbrace g_h \right\rbrace$ from Theorem~\ref{theorem203}. We have shown previously that $u_C^h\rightharpoonup u_C^R$ in $L_{1}\left((N,R)\times[0,T),\,x\dd x\dd t\right)$ and by Theorem~\ref{theorem201} this is also the case in $L_{1}\left((N,R)\times[0,T),\dd x \dd t\right)$ and so an application of Theorem~\ref{theorem203} gives us
\begin{align}\label{equation508}
\int_{0}^{T} \int_{N}^{R} \chi_{[\Delta t,T)}(t) & X^h(x)u^h_C(x,t)
\frac{\varphi( \xi^h(x),t)-\varphi( \xi^h(x),t-\Delta t)}{\Delta t} \dd x \dd t\rightarrow \nonumber\\
&\int_{0}^{T}\int_{N}^{R} xu_C^R(x,t)\frac{\partial\varphi}{\partial t}(x,t) \dd x \dd t,
\end{align}
as the mesh size parameter $h$ goes to 0. Next, we consider the second term appearing above. Since
$\varphi$ is $C_c^2\left([N,R]\times[0,T)\right)$, its derivatives are bounded, allowing us to deduce that
\begin{equation}\label{equation54322}
\frac{1}{\Delta t} \int_{0}^{\Delta t} \varphi(\xi^h(x),t)\dd t\rightarrow\varphi(x,0)\hspace{2mm}\text{as}\hspace{2mm}h\searrow0,
\end{equation}
for all $x\in (N,R)$, as we now demonstrate. Consider the following:
\begin{align}\label{equation54321}
&\left|\frac{1}{\Delta t}\int_{0}^{\Delta t}\varphi( \xi^h(x),t)\dd t-\varphi(x,0)\right|=
\left|\frac{1}{\Delta t}\int_{0}^{\Delta t}\left(\varphi( \xi^h(x),t)-\varphi(x,0)\right)\dd t\right|\nonumber\\
&=\left|\frac{1}{\Delta t}\int_{0}^{\Delta t}\left(\varphi( \xi^h(x),t)-\varphi(x,t)+\varphi(x,t)-\varphi(x,0)\right)\dd t\right|\nonumber\\
&\leq\frac{1}{\Delta t}\int_{0}^{\Delta t}\left|\varphi( \xi^h(x),t)-\varphi(x,t)\right|\dd t
+\frac{1}{\Delta t}\int_{0}^{\Delta t}\left|\varphi(x,t)-\varphi(x,0)\right|\dd t.
\end{align}
Expressing $\varphi( \xi^h(x),t)$ using a Taylor expansion about $(x,t)\in(N,R)\times(0,T)$, and recalling Remark~\ref{remark502} about the derivatives of $\varphi$ we get
\[\varphi( \xi^h(x),t)=\varphi(x,t)+(\xi^h(x)-x)\frac{\partial\varphi}{\partial x}(x,t)+O(h^2).\]
Hence bounding the derivative $\partial\varphi/\partial x$ and noting that $\left|\xi^h(x)-x\right|\leq h$ gives us
\[\left|\varphi( \xi^h(x),t)-\varphi(x,t)\right|\leq C_1h,\]
for some constant $C_1$ independent of $h$ and $\Delta t$. Similarly, expanding $\varphi(x,t)$ about $(x,0)$, where $x\in(N,R)$, produces
\begin{equation}\label{Tseries}
\varphi( x,t)=\varphi(x,0)+\underbrace{(t-0)}_{\leq \Delta t}\frac{\partial\varphi}{\partial t}(x,0_+)+O(h^2),
\end{equation}
for $t\geq0$. The use of the notation $\tfrac{\partial\varphi}{\partial t}(x,0_+)$ signifies we are considering the right derivative of $\varphi$ with respect to $t$ at $t=0$.
The expansion \eqref{Tseries} then leads to
\begin{align*}
\left|\varphi(x,t)-\varphi(x,0)\right|\leq C_2h,
\end{align*}
for some other constant $C_2$, independent of $h$ and $\Delta t$. Returning to \eqref{equation54321} we have
\begin{align*}
\left|\frac{1}{\Delta t}\int_{0}^{\Delta t}\varphi( \xi^h(x),t)\dd t-\varphi(x,0)\right|\leq (C_1+C_2)h.
\end{align*}
Hence \eqref{equation54322} does indeed hold  for $x\in (N,R)$, furthermore the convergence is uniform with respect to $x$. Together with the pointwise convergence of $X^h(x)$ to $x$, and from  Remark~\ref{convremark2}, the $L_1(N,R)$ strong convergence  of $u^h_C(x,0)$  to the restriction of $c_0$ to $(N,R)$, another application of Theorem~\ref{theorem203} yields
\begin{equation}\label{equation509}
\int_{N}^{R}X^h(x)u^h_C(x,0)\frac{1}{\Delta t}\int_{0}^{\Delta t}\varphi( \xi^h(x),t)\dd t\dd x\rightarrow
\int_{N}^{R}xc_{0}(x)\varphi(x,0)\dd x.
\end{equation}

Moving on to the third term of equation \eqref{equation506}, for $t\in\tau_n$ and $x\in\Lambda_i$ we can write the numerical flux as an integral as follows:
\begin{align*}
F_{i+1/2}^{n}&=\sum_{j=i+1}^{I_h-1}\sum_{k=0}^{i}x_{k}A_jB_{k,j}u_{C}^{n,j}\Delta x_{k}\Delta x_{j}\\
             &=\sum_{j=i+1}^{I_h-1}\sum_{k=0}^{i}\int_{\Lambda_j}\int_{\Lambda_k}X^h(w)a^h(v)b^h(w|v) u_C^h(v,t)\dd w\dd v\\
             &=\int_{x_{i+1/2}}^{R}\int_{N}^{x_{i+1/2}}X^h(w)a^h(v)b^h(w|v) u_C^h(v,t)\dd w\dd v\\
             &=\int_{N}^{R}\int_{N}^{R} \chi_{[\Xi^h(x),R]}(v)\chi_{[N,\Xi^h(x)]}(w) X^h(w)a^h(v)b^h(w|v) u_C^h(v,t)\dd w\dd v\\
             &=:\mathcal{F}^h(u_C^h)(x,t).
\end{align*}
Then the third term of equation \eqref{equation506} is given by
\begin{align*}
&\sum_{n=0}^{M-1} \sum_{i=0}^{I_h-1}\Delta t F_{i+1/2}^n(\varphi_{i+1}^{n}-\varphi_{i}^{n})\\
&=\sum_{n=0}^{M-1} \sum_{i=0}^{I_h-1}F_{i+1/2}^n\int_{\tau_n}\varphi(x_{i+1/2},t)-\varphi(x_{i-1/2},t)\dd t\\
&=\sum_{n=0}^{M-1} \sum_{i=0}^{I_h-1}\int_{\tau_n}\int_{\Lambda_i}F_{i+1/2}^n\frac{\partial\varphi }{\partial x}(x,t)\dd x\dd t\\
&=\int_0^T \int_{N}^R\mathcal{F}^h(u_C^h)(x,t)\frac{\partial\varphi }{\partial x}(x,t)\dd x \dd t.
\end{align*}
Expressed in full this gives us the following, after a switch in the order of integration:
\begin{align}\label{equation510}
&\int_0^T\int_{N}^R\int_{N}^{R}\int_{N}^{R} \chi_{[\Xi^h(x),R]}(v)\chi_{[N,\Xi^h(x)]}(w) X^h(w)a^h(v)b^h(w|v) u_C^h(v,t)\dd w \dd v\frac{\partial\varphi }{\partial x}(x,t)\dd x\dd t\nonumber\\
&=\int_{N}^R\int_{N}^{R}\chi_{[N,\Xi^h(x)]}(w) X^h(w)\left(\int_0^T\int_{N}^{R} \chi_{[\Xi^h(x),R]}(v)a^h(v)b^h(w|v) u_C^h(v,t)\frac{\partial\varphi }{\partial x}(x,t)\dd v\dd t\right)\dd w\dd x.
\end{align}
Due to the boundedness of the partial derivative $\varphi_x$ and the $L_{\infty,loc}$ property of the functions $a$ and $b$, for almost all fixed $(x,w) \in (N,R) \times (N,R)$, the product $\chi_{[\Xi^h(x),R]}(v)a^h(v)b^h(w|v)\frac{\partial\varphi }{\partial x}(x,t)$ is a bounded (uniformly w.r.t. $h$) function of $v$ and $t$. Also, as a consequence of Remarks~\ref{convremark1} and~\ref{convremark3}, it converges pointwise almost everywhere (w.r.t the measure $\dd v\dd t$) on $(v,t) \in (N,R) \times [0,T)$ to $\chi_{[x,R]}(v)a(v)b(w|v)\frac{\partial\varphi }{\partial x}(x,t)$ as $h\rightarrow 0$. Since $u_C^h\rightharpoonup u_C^R$ in $L_1((N,R)\times[0,T),\dd x\dd t)$, an application of Theorem~\ref{theorem203} gives us
\begin{align*}
&\int_0^T\int_{N}^{R} \chi_{[\Xi^h(x),R]}(v)a^h(v)b^h(w|v) u_C^h(v,t)\frac{\partial\varphi }{\partial x}(x,t)\dd v\dd t\\
&\hspace{10mm}\rightarrow \int_0^T\int_{N}^{R} \chi_{[x,R]}(v)a(v)b(w|v) u_C^R(v,t)\frac{\partial\varphi }{\partial x}(x,t)\dd v\dd t.
\end{align*}
Using the local boundedness of $a$ and $b$, along with the boundedness of the partial derivative $\frac{\partial\varphi }{\partial x}$, and the boundedness of the sequence $\left\lbrace u_C^h \right\rbrace$ in $L_1((N,R)\times[0,T),\dd x \dd t)$, the left--hand side above can be bounded by a constant. It is easily seen that $\chi_{[N,\Xi^h(x)]}(w) X^h(w)$ converges pointwise to $\chi_{[N,x]}(w)w$ as $h \rightarrow 0$ and can be bounded by $R$ over our domain of interest. Therefore applying the Lebesgue dominated convergence theorem \cite[Theorem 1.8]{lieb01}, we get that
\eqref{equation510} converges to
\begin{align}\label{equation511}
&\int_{N}^R\int_{N}^{R}\chi_{[N,x]}(w)w\left(\int_0^T\int_{N}^{R} \chi_{[x,R]}(v)a(v)b(w|v)
u_C^R(v,t)\frac{\partial\varphi }{\partial x}(x,t)\dd v\dd t\right)\dd w \dd x\nonumber\\
&=\int_0^T\int_{N}^R\left(\int_{x}^{R}\int_{N}^{x}w a(v)b(w|v)u_C^R(v,t)\dd w\dd v \right)
\frac{\partial\varphi }{\partial x}(x,t)\dd x\dd t\nonumber\\
&=\int_0^T\int_{N}^R\mathcal{F}^R\left(xu_{C}^R\right)(x,t)\frac{\partial\varphi }{\partial x}(x,t)\dd x\dd t.
\end{align}
Therefore, in the limit as $h\rightarrow 0$, the third term of \eqref{equation506} coincides with the third term of \eqref{equation505}.

Now the fourth term from equation \eqref{equation506} is given fully by
\begin{align*}
&\sum_{n=0}^{M-1} \sum_{i=0}^{I_h-1}A_i\left(\sum_{j=1}^{N}j\tilde{B}_{j,i}\right)u_{C}^{n,i}\varphi_{i}^{n}\Delta x_{i} \Delta t\\
&=\sum_{n=0}^{M-1}\sum_{i=0}^{I_h-1}\int_{\tau_n}\int_{\Lambda_i}a^h(v)\left(\sum_{j=1}^{N}jb_j^h(v)\right)u_c^h(v,t)
\varphi(\xi^h(v),t)\dd v\dd t\\
&=\int_{0}^T\int_{N}^Ra^h(v)\left( \sum_{j=1}^{N}jb_j^h(v)\right)u_c^h(v,t)\varphi(\xi^h(v),t)\dd v\dd t.
\end{align*}
The pointwise convergence of $a^h$, $b_j^h$ and $\xi^h$ along with the continuity of $\varphi$ means that
\[a^h(v)\left(\sum_{j=1}^{N}jb_j^h(v)\right)\varphi(\xi^h(v),t)\rightarrow a(v)\left( \sum_{j=1}^{N}jb_j(v)\right)\varphi(v,t),\]
for all $t\in[0,T)$ and almost all $v\in(R,N)$ as $h\rightarrow0$. Since $a$ and $b_i$ are in $L_{\infty,loc}([N,\infty))$ and $\varphi$ is $C^2$ on
$[N,R]\times [0,T)$ with compact support (hence is a bounded function), the expressions on either side belong to $L_{\infty}((N,R)\times[0,T))$, with the left-hand side being uniformly bounded w.r.t. $h$. Hence, with $u_c^h\rightharpoonup u_C^R$ in $L_1((N,R)\times[0,T),\dd v\dd t)$, applying Theorem~\ref{theorem203}, yields
\begin{align}\label{equation512}
&\int_{0}^T \int_{N}^R a^h(v) \left( \sum_{j=1}^{N}jb_j^h(v)\right) u_c^h(v,t) \varphi(\xi^h(v),t) \dd v\dd t\nonumber\\
&\rightarrow\int_{0}^T\int_{N}^Ra(v)\left( \sum_{j=1}^{N}jb_j(v)\right)u_c^R(v,t)\varphi(v,t)\dd v\dd t\nonumber\\
&=\int_{0}^{T}\int_{N}^{R}S(vu_C^R)(v,t)  \varphi(v,t)\dd v\dd t.
\end{align}
Taken together \eqref{equation508}, \eqref{equation509}, \eqref{equation511} and \eqref{equation512} show that $u_C^R$ satisfies \eqref{equation505} for all $\varphi\in C_c^2([N,R]\times[0,T))$, and hence $u_C^R$ is a weak solution, as set out in Definition \ref{definition501}.
\end{proof}

\subsection{Discrete Fragmentation Regime: Convergence}

Now let us consider the discrete regime approximations. This is treated by a similar approach to the one we adopted for the continuous regime equation, but as a first step we establish a bound on the values $u_{D}^{n,i}$.

\begin{lemma}\label{lemma503}There exists a constant $C$, independent of $h$ and $R$, such that for all values of $n$ and $i$ we have
\[0\leq u_{D}^{n,i}\leq C.\]
\end{lemma}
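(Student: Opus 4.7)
The nonnegativity half is free: under the assumptions of Remark~\ref{conditions} (in particular condition 5), Lemma~\ref{lemma401} guarantees that every $u_D^{n,i}\ge 0$. The content of the statement is therefore the uniform upper bound, and the plan is to extract it from the mass--conservation identity already proved in Lemma~\ref{lemma402}.

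The key observation is that, since all $u_C^{n,i}$ and $u_D^{n,i}$ are nonnegative, each term in the total mass
\[
M^h(t_n) \,=\, \sum_{i=0}^{I_h-1} x_i\,\Delta x_i\,u_C^{n,i} \;+\; \sum_{i=1}^{N} i\,u_D^{n,i}
\]
is nonnegative. In particular, for every $n$ and every $i\in\{1,\ldots,N\}$,
\[
u_D^{n,i} \,\le\, i\,u_D^{n,i} \,\le\, \sum_{j=1}^{N} j\,u_D^{n,j} \,\le\, M^h(t_n).
\]
By Lemma~\ref{lemma402}, $M^h(t_n)=M^h(0)$, so it suffices to bound $M^h(0)$ by a constant depending only on the data $c_0$ and $d_0$, and not on $h$ or $R$.

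The discrete initial mass is exactly $\sum_{i=1}^N i\,d_{0,i}$, independent of both $h$ and $R$. For the continuous initial mass, I would use \eqref{equation410} together with the elementary estimate $x_i\le \tfrac{3}{2}\,x$ for $x\in\Lambda_i$ (the same midpoint/left-endpoint bound already used in the proof of Lemma~\ref{lemma501}), giving
\[
\sum_{i=0}^{I_h-1} x_i\,\Delta x_i\,u_C^{0,i}
\,=\,\sum_{i=0}^{I_h-1} x_i\int_{\Lambda_i} c_0(x)\dd x
\,\le\,\tfrac{3}{2}\int_{N}^{R} x\,c_0(x)\dd x
\,\le\,\tfrac{3}{2}\,\|c_0\|_{L_1^1(N,\infty)}.
\]
(Here I am implicitly using the standing hypothesis, present throughout the analysis, that the initial datum $c_0$ has finite first moment on $(N,\infty)$; otherwise no $R$-independent bound is possible.) Combining these two bounds yields
\[
u_D^{n,i} \,\le\, M^h(0) \,\le\, \tfrac{3}{2}\,\|c_0\|_{L_1^1(N,\infty)} + \sum_{j=1}^{N} j\,d_{0,j} \,=:\, C,
\]
and $C$ is manifestly independent of $h$, $R$, $n$, and $i$.

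There is no real obstacle here: the proof is essentially a repackaging of Lemma~\ref{lemma402}. The only subtlety worth flagging is verifying that the discretised initial mass is controlled by the continuum first moment of $c_0$ with an $h$- and $R$-independent constant, which is exactly what the $x_i/x_{i-1/2}\le 3/2$ estimate provides.
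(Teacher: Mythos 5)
Your proposal is correct and follows essentially the same route as the paper's own proof: nonnegativity from Lemma~\ref{lemma401}, then the upper bound via the mass-conservation identity of Lemma~\ref{lemma402}, with the initial mass bounded using the $x_i/x_{i-1/2}\le 3/2$ estimate and the extension of the integral to $(N,\infty)$ to secure $R$-independence. The finite first moment of $c_0$ that you flag is indeed the standing hypothesis the paper relies on here.
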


\begin{proof}The nonnegativity of $u_{D}^{n,i}$ follows from Lemma \ref{lemma401}. We shall therefore concentrate on the upper bound. From Lemma~\ref{lemma402} we have, for all admissible $n$, that the following holds:
\begin{align*}
\sum_{i=0}^{I_{h-1}}x_i \Delta x_i u_{C}^{n,i} + \sum_{i=1}^N i u_{D}^{n,i}&=\sum_{i=0}^{I_{h-1}}x_i \Delta x_i u_{C}^{0,i} + \sum_{i=1}^N i u_{D}^{0,i}\\
&=\sum_{i=0}^{I_{h-1}}x_i \int_{\Lambda_i}c_0(x)\dd x + \sum_{i=1}^N i d_{0i}\\
&\leq\frac{3}{2}\sum_{i=0}^{I_{h-1}}\int_{\Lambda_i}c_0(x)\,x\dd x + \sum_{i=1}^N i d_{0i}\\
&\leq\frac{3}{2}\int_{N}^\infty c_0(x)\,x\dd x + \sum_{i=1}^N i d_{0i}=C<\infty.
\end{align*}
Therefore, for all $n$ and $i$ we have that
\[u_{D}^{n,i}\leq C,\]
where the constant $C$ is independent of the mesh parameter $h$ and the truncation parameter $R$.
\end{proof}

\begin{theorem}\label{theorem503}For each $i=1,\ldots,N$, the family $\left\lbrace u^h_{Di}\right \rbrace$ forms a sequentially weakly compact set in $L_{1}\left([0,T)\right)$, hence must have a weakly convergent subsequence.
\end{theorem}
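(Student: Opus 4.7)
The plan is to apply the Dunford--Pettis theorem (Theorem~\ref{theorem205}) to the family $\{u^h_{Di}\}$ in $L_1([0,T))$. Since $[0,T)$ carries a finite Lebesgue measure, condition (iii) of Dunford--Pettis is automatic, so I only need to verify equiboundedness in $L_1$ and equiintegrability.

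For the $L_1$ bound I would invoke Lemma~\ref{lemma503}, which provides the uniform pointwise bound $0 \le u_D^{n,i} \le C$, independent of $h$, $n$ and $R$. From the definition \eqref{equation4125} of $u^h_{Di}$ as a piecewise constant interpolant of the values $u_D^{n,i}$, this transfers immediately to the pointwise bound $0 \le u^h_{Di}(t) \le C$ for all $t \in [0,T)$, yielding $\|u^h_{Di}\|_{L_1([0,T))} \le CT$, uniformly in $h$. Hence $\{u^h_{Di}\}$ is equibounded in $L_1([0,T))$.

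For equiintegrability I would use the same uniform $L_\infty$ bound: for any measurable $A \subset [0,T)$,
\[
\int_A |u^h_{Di}(t)|\dd t \;\le\; C\,\lambda(A),
\]
with $\lambda$ denoting Lebesgue measure, so the integrals can be made uniformly small by restricting to sets of small measure; this is one of the equivalent characterisations of equiintegrability. Alternatively, one may apply de la Vallée Poussin (Theorem~\ref{theorem204}) with a function such as $\Phi(x)=x^2+x$ (which is nonnegative, convex, smooth, has $\Phi(0)=0$, $\Phi'(0)=1$, concave derivative, and superlinear growth), and the uniform $L_\infty$ bound again produces $\sup_h \int_0^T \Phi(|u^h_{Di}|)\dd t \le (C^2+C)T < \infty$.

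With all three hypotheses of the Dunford--Pettis theorem verified, I conclude that $\{u^h_{Di}\}$ is weakly sequentially compact in $L_1([0,T))$ and hence has a weakly convergent subsequence, for each $i=1,\ldots,N$. There is no serious obstacle here; the argument is a short corollary of Lemma~\ref{lemma503}, and the only minor subtlety is to note that the uniform $L_\infty$ bound on a finite-measure space automatically supplies equiintegrability, avoiding any need for a more delicate construction of a test function $\Phi$.
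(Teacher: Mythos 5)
Your proposal is correct and follows essentially the same route as the paper: both arguments rest entirely on the uniform bound $0\leq u_D^{n,i}\leq C$ from Lemma~\ref{lemma503}, which gives the $L_1$ bound $CT$ and equiintegrability at once, after which Dunford--Pettis applies. The only cosmetic difference is that the paper verifies equiintegrability by feeding an arbitrary de la Vall\'ee Poussin function $\Phi$ through the $L_\infty$ bound (obtaining $\Phi(C)T$), whereas you use the uniform absolute-continuity characterisation (or a concrete $\Phi$); both are equally valid.
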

\begin{proof}
The bound obtained in Lemma~\ref{lemma503} allows us to easily establish equiboundedness and equiintegrability in $L_{1}\left([0,T)\right)$ for each of the families $\left\lbrace u^h_{Di}\right \rbrace$  as follows:

\begin{equation}\label{equation513}
\left\|u_{Di}^h(\cdot)\right\|_{L_1\left([0,T)\right)}=\sum_{n=0}^{M-1}u^{n,i}_D\Delta t \leq \sum_{n=0}^{M-1}C\Delta t=CT.
\end{equation}

Now let $\Phi$ be any function of the nature described in Theorem~\ref{theorem204}. Since $\Phi$ is increasing, the established bound for $u^{n,i}_D$ allows us to deduce that
\begin{equation*}
\int_0^T\Phi(u_{Di}^h(t))\dd t=\sum_{n=0}^{M-1}\Phi(u^{n,i}_D)\Delta t \leq \sum_{n=0}^{M-1}\Phi(C)\Delta t=\Phi(C)T.
\end{equation*}

Hence each of the families $\left\lbrace u^h_{Di} \right \rbrace$ is equiintegrable. By the Dunford--Pettis theorem (Theorem~\ref{theorem205}), each of the families form a weakly sequentially compact set in $L_{1}\left([0,T)\right)$. As such, they all contain some weakly convergent subsequence.
\end{proof}

\begin{remark}
We note that what we seek is a collection of values $\left\lbrace h^j \right \rbrace$, such that all of  the sequences $\left\lbrace u^{h^j}_{Di} \right \rbrace$, for $i=1,\ldots,N$, converge weakly, as $j\rightarrow \infty$ and $h_j\rightarrow 0$. We achieve this by means of a diagonal argument, which we now outline. Knowing that the family $\left\lbrace u^h_{D1} \right \rbrace$ has a weakly convergent subsequence, let us denote the corresponding sequence of $h$-values by $\left\lbrace h^j \right \rbrace_{j=1}^\infty$ and consider the family $\left\lbrace u^{h^j}_{D2} \right \rbrace_{j=1}^\infty$. As this set satisfies the equiboundedness and equiintegrability conditions of the Dunford--Pettis theorem, it too must have a weakly convergent subsequence. Extracting this subsequence and denoting the corresponding $h$-values by $\left\lbrace h^{j_n} \right \rbrace_{n=1}^\infty$, we then have both
$\left\lbrace u^{h^{j_n}}_{D1} \right \rbrace$ and $\left\lbrace u^{h^{j_n}}_{D2} \right \rbrace$ converging (weakly) as $n\rightarrow \infty$ and $j_n\rightarrow \infty$ . We can continue this process, working through each of the families $\left\lbrace u^h_{Di} \right \rbrace$, until we have a set of common $h$-values, $\left\lbrace h_j^{\prime } \right \rbrace_{j=1}^\infty$, for which all the subsequences $\left\lbrace u^{h_j^{\prime }}_{Di} \right \rbrace_{j=1}^\infty$ are (weakly) convergent as $j\rightarrow \infty$ and $h_j^{\prime }\rightarrow 0$.

From now on these convergent subsequences are considered implicitly and we use $\left\lbrace u^h_{Di} \right \rbrace$ to denote said subsequences, unless otherwise stated. Let us denote the weak limit of $\left\lbrace u^h_{Di}\right \rbrace$ by $u_{Di}^R$ (note the upper case superscript notation for the limit).
\end{remark}

\subsection{Discrete Fragmentation Regime: Weak Solution}

Having established the convergence of our sequence of approximations we now aim to determine whether the limit produced provides a solution to the equation \eqref{equation403} and if so in what sense. As such, following on from Definition \ref{definition501}, we introduce
\begin{definition}\label{weakdisc}
We say that the function $u_{Di}^R$ is a weak solution of equation \eqref{equation403} if it satisfies
\begin{align}\label{equation515}
&\int_{0}^T u_{Di}^R(t)\frac{\ddn \phi}{\ddn t}(t)\dd t + d_{0_i}\phi(0) - \int_{0}^T a_iu_{Di}^R(t) \phi(t) \dd t\nonumber\\
&+ \int_{0}^T \sum_{j=i+1}^{N}a_jb_{i,j}u_{Dj}^R(t) \phi(t) \dd t + \int_{0}^T \int_{N}^{R} a(y)b_i(y)u_{C}^R(y,t)\phi(t)\dd y \dd t=0
\end{align}
for any $\phi \in C_c^2([0,T))$, where $C_c^2([0,T))$ is defined in an analogous fashion to  $C_c^2\left([N,R]\times[0,T)\right)$ from Definition \ref{definition501}.
\end{definition}

\begin{theorem}\label{theorem504}The functions $u_{Di}^R$ obtained as weak limits of the sequences $\left\lbrace u^h_{Di} \right \rbrace$ are indeed weak solutions of \eqref{equation403}, satisfying equation \eqref{equation515} for any $\phi \in C_c^2([0,T))$.
\end{theorem}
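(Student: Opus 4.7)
The plan is to mirror the strategy used in the proof of Theorem~\ref{theorem502}: first construct a discrete analogue of the weak formulation \eqref{equation515} by multiplying the update rule \eqref{equation412} by a time-averaged version of the test function $\phi$, then pass to the limit term by term using the weak convergences already established together with Theorem~\ref{theorem203}.

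Given $\phi\in C_c^2([0,T))$, I would introduce, in the spirit of Definition~\ref{definition503}, the time-averaged values
\[\phi^n := \frac{1}{\Delta t}\int_{\tau_n}\phi(t)\dd t \qquad (n=0,\ldots,M-1),\]
observing that for $\Delta t$ sufficiently small the compact support of $\phi$ in $[0,T)$ forces $\phi^{M-1}=0$, so that an Abel summation-by-parts in the time index produces no boundary contribution at $t=T$. Multiplying \eqref{equation412} by $\phi^n$, summing over $n=0,\ldots,M-1$, and rearranging the leading time difference via Abel's transformation, I expect to obtain
\begin{align*}
&\sum_{n=0}^{M-2}u_D^{n+1,i}\bigl(\phi^{n+1}-\phi^n\bigr) + u_D^{0,i}\phi^0 - \Delta t\sum_{n=0}^{M-1}a_i u_D^{n,i}\phi^n\\
&\quad + \Delta t\sum_{n=0}^{M-1}\sum_{j=i+1}^{N}a_j b_{i,j}u_D^{n,j}\phi^n + \Delta t\sum_{n=0}^{M-1}\sum_{j=0}^{I_h-1}A_j\tilde{B}_{i,j}u_C^{n,j}\phi^n\Delta x_j = 0,
\end{align*}
which is the discrete analogue of \eqref{equation515}.

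Next, I would re-express each sum above as an integral against the piecewise-constant approximations $u_{Di}^h$, $u_C^h$, $a^h$, $b_i^h$, and the piecewise-constant lift $\phi^h(t):=\sum_n \chi_{\tau_n}(t)\phi^n$, and then pass to the limit $h\searrow 0$ term by term. The initial contribution handles itself, since $u_D^{0,i}=d_{0_i}$ exactly and $\phi^0\to\phi(0)$ by a direct Taylor-type estimate. For the three linear terms containing the discrete values $u_D^{n,j}$---including the discrete time-derivative term---I would combine the weak convergence $u_{Dj}^h\rightharpoonup u_{Dj}^R$ in $L_1([0,T))$ (granted by Theorem~\ref{theorem503} and the diagonal extraction immediately following it) with the uniform pointwise convergence $\phi^h\to\phi$ and the pointwise convergence of the divided differences $(\phi^{n+1}-\phi^n)/\Delta t \to \phi'(t)$; both are uniformly $L_\infty$-bounded thanks to the compact support of $\phi$ and the argument recalled in Remark~\ref{remark502}, so Theorem~\ref{theorem203} yields each of the required limits.

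The coupling integral, which is the main technical source of care, takes the form
\[\int_0^T\int_N^R a^h(y)\,b_i^h(y)\,u_C^h(y,t)\,\phi^h(t)\dd y\dd t.\]
Here the product $a^h(y)b_i^h(y)\phi^h(t)$ converges pointwise almost everywhere on $(N,R)\times[0,T)$ to $a(y)b_i(y)\phi(t)$ by Remark~\ref{convremark1} and is uniformly $L_\infty$-bounded thanks to $a,b_i\in L_{\infty,loc}$ and the boundedness of $\phi$. Because Theorem~\ref{theorem201} allows us to reinterpret $u_C^h\rightharpoonup u_C^R$ as weak convergence in $L_1((N,R)\times[0,T),\dd y\dd t)$, a further application of Theorem~\ref{theorem203} produces the limit $\int_0^T\int_N^R a(y)b_i(y)u_C^R(y,t)\phi(t)\dd y\dd t$. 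Assembling the five limits yields \eqref{equation515} for arbitrary $\phi\in C_c^2([0,T))$. The principal obstacle, exactly as in Theorem~\ref{theorem502}, is the careful Taylor-expansion verification that the divided difference $(\phi^{n+1}-\phi^n)/\Delta t$ converges pointwise to $\phi'$ whilst remaining uniformly $L_\infty$-bounded, so as to be placed in the form demanded by Theorem~\ref{theorem203}; the remaining steps reduce to careful bookkeeping.
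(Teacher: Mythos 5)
Your proposal is correct and follows essentially the same route as the paper: time-average the test function, multiply the update rule \eqref{equation412} by $\phi^n$, sum by parts in $n$ using $\phi^{M-1}=0$, rewrite each sum as an integral against the piecewise-constant approximants, and pass to the limit term by term via the weak convergences of $u_{Dj}^h$ and $u_C^h$ together with Theorem~\ref{theorem203} and the Taylor-expansion control of the divided differences of $\phi$. The only cosmetic difference is that the paper exploits the exact identity $\Delta t\,\phi^n=\int_{\tau_n}\phi(t)\,\ddn t$ to insert $\phi(t)$ itself (and the quotient $(\phi(t)-\phi(t-\Delta t))/\Delta t$) directly into the integrals rather than working with the lift $\phi^h$, but this changes nothing of substance.
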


\begin{proof} For such a function $\phi$, let us denote its approximation over $\tau_n$ by $\phi^n$, which is defined as
\[\phi^n=\frac{1}{\Delta t}\int_{\tau_n}\phi(t)\dd t\hspace{2mm}\text{for}\hspace{1.5mm}n=0,\ldots,M-1,\]
and $\phi^M=0$. Multiplying \eqref{equation412} by $\phi^n$ and summing over $n$ from $0$ to $M-1$, gives us the following equality:
\begin{align*}
&\sum_{n=0}^{M-1}\left(u_{D}^{n+1,i}-u_{D}^{n,i}\right)\phi^n= -\sum_{n=0}^{M-1}a_i u_{D}^{n,i}\phi^n \Delta t \\
&+\sum_{n=0}^{M-1} \sum_{j=i+1}^{N} a_j b_{i,j} u_{D}^{n,j} \phi^n \Delta t+ \sum_{n=0}^{M-1} \sum_{j=0}^{I_h-1} A_j \tilde{B}_{i,j} u_{C}^{n,j}\phi^n
\Delta x_{j}\Delta t.
\end{align*}
Since $\phi$ is compactly supported, for sufficiently small $\Delta t$ we have $\phi^{M-1}=0$; then, further manipulation of the first term yields
\begin{align}\label{equation516}
&\sum_{n=0}^{M-1} u_{D}^{n+1,i} \left( \phi^{n+1} -\phi^n \right)+ u_{D}^{0,i}\phi^{0} - \sum_{n=0}^{M-1} a_i u_{D}^{n,i} \phi^n \Delta t\nonumber \\
&+\sum_{n=0}^{M-1}\sum_{j=i+1}^{N}a_jb_{i,j}u_{D}^{n,j}\phi^n\Delta t+\sum_{n=0}^{M-1}\sum_{j=0}^{I_h-1}A_j\tilde{B}_{i,j}u_{C}^{n,j}\phi^n
\Delta x_j\Delta t=0.
\end{align}
Looking more closely at the first term above we can rewrite it as
\begin{align*}
\sum_{n=0}^{M-1} u_{D}^{n+1,i} \left( \phi^{n+1} -\phi^n \right)
&=\underbrace{u_{D}^{M,i} \left( \phi^{M} -\phi^{M-1} \right)}_{=0}
+\sum_{n=0}^{M-2}  \int_{\tau_{n+1}} u_{Di}^h(t)   \frac{\phi(t)-\phi(t-\Delta t)}{\Delta t} \dd t\\
&=\int_{0}^T \chi_{[\Delta t,T)}(t) u_{Di}^h(t)   \frac{\phi(t)-\phi(t-\Delta t)}{\Delta t} \dd t.
\end{align*}
Assuming that $t\in(0,T)$ and $\Delta t\leq t$, then a Taylor series expansion of $\phi(t-\Delta t)$ about $t$ gives
\[\phi(t-\Delta t)=\phi(t)-\Delta t \frac{\ddn \phi}{\ddn t}(t)+O(\Delta t^2).\]
Therefore, we have
\begin{equation}\label{equation51515}
\chi_{[\Delta t,T)}(t)
\frac{\phi(t)-\phi(t-\Delta t)}{\Delta t}=\left\lbrace \begin{array}{cc}&\hspace{-5mm}\dfrac{\Delta t\frac{d \phi}{dl t}(x,t)+O(\Delta t^2)}{\Delta t}, \hspace{6mm}\Delta t \leq t\\ \vspace{-3mm}\\&\hspace{10.5mm}0,\hspace{21.2mm}\Delta t > t\end{array}\right..
\end{equation}
As such, the left-hand side of \eqref{equation51515} can be seen to converge pointwise to $\phi_t$ on $(0,T)$, as $h$, and by condition \eqref{equation404}, $\Delta t$ goes to $0$. By an analogous argument to that used for $\varphi$ and its derivatives, $\phi$ and its derivative $\phi_t$ must be bounded, therefore we can bound the left-hand side above, with the bound being uniform w.r.t $h$. Then, as ${u_D^h}_i\rightharpoonup {u_D^R}_i$ in $L_1\left([0,T)\right)$, applying Theorem~\ref{theorem203}, as before, gives us
\begin{equation}\label{equation517}
\int_{0}^T \chi_{[\Delta t,T)}(t) u_{Di}^h(t)\frac{\phi(t)-\phi(t-\Delta t)}{\Delta t} \dd t\rightarrow \int_{0}^T u_{Di}^R(t) \frac{\ddn \phi}{\ddn t}(t)\dd t.
\end{equation}
By definition, $u_{D i}^{n}={d_0}_i$, and since $\phi$ is $C^2$ with compact support, its derivative must be bounded, from which we deduce that
\[\phi^0=\frac{1}{\Delta t}\int_0^{\Delta t}\phi(t)\dd t\rightarrow\phi(0),\]
as $h$ goes to 0. Therefore
\begin{equation}\label{equation518}
u_{D}^{0 ,i}\phi^0\rightarrow{d_0}_i\phi(0)\hspace{1.5mm}\text{as}\hspace{1.5mm} h \rightarrow 0.
\end{equation}
By defining $b_{i,i}$ to be $-1$, we can combine the third and fourth terms of \eqref{equation516}, writing them as
\begin{align*}
\sum_{n=0}^{M-1} \sum_{j=i}^{N} a_j b_{i,j} u_{D}^{n,j} \phi^n \Delta t&=\sum_{n=0}^{M-1} \sum_{j=i}^{N} \int_{\tau_n}a_j b_{i,j} u_{Dj}^h(t) \phi(t)
\dd t\\
&=\int_0^T\sum_{j=i}^{N} a_j b_{i,j} u_{Dj}^h(t) \phi(t) \dd t,
\end{align*}
and since $u_{Dj}^h\rightharpoonup u_{Dj}^R$ in $L_1\left([0,T)\right)$, for each $j$, we have
\begin{align}\label{equation519}
&\int_0^T\sum_{j=i}^{N} a_j b_{i,j}u_{Dj}^h(t) \phi(t) \dd t \rightarrow \int_0^T\sum_{j=i}^{N} a_j b_{i,j} u_{Dj}^R(t) \phi(t) \dd t\nonumber\\
&=-\int_{0}^T a_iu_{Di}^R(t) \phi(t) \dd t+ \int_{0}^T \sum_{j=i+1}^{N}a_jb_{i,j}u_{Dj}^R(t) \phi(t) \dd t,
\end{align}
giving us the third and fourth terms of our weak formulation \eqref{equation515}. Rewriting the final term of our discrete relation, we get
\begin{align*}
\sum_{n=0}^{M-1} \sum_{j=0}^{I_h-1} A_j \tilde{B}_{i,j} u_{C}^{n,j}\phi^n\Delta x_j \Delta t&=\sum_{n=0}^{M-1} \sum_{j=0}^{I_h-1} \int_{\tau_n}
\int_{\Lambda_j} a^h(y)b_i^h(y)\phi(t) u_C^h(y,t)\dd y \dd t\\
&=\int_{0}^T\int_{N}^R a^h(y)b_i^h(y)\phi(t) u_C^h(y,t)\dd y \dd t.
\end{align*}
From Remark~\ref{convremark1} we have $a^h(y)$ and $b_i^h(y)$ converging pointwise to $a(y)$ and $b_i(y)$ respectively, and along with $\phi$ are bounded (uniformly with respect to $h$), a final application of Theorem~\ref{theorem203} allows us to deduce that
\begin{equation}\label{equation520}
\int_{0}^T\int_{N}^R a^h(y)b_i^h(y)\phi(t) u_C^h(y,t)\dd y \dd t\rightarrow\int_{0}^T \int_{N}^{R} a(y)b_i(y)u_{C}^R(y,t)\phi(t)\dd y \dd t,
\end{equation}
as the mesh size parameter $h\rightarrow0$. Taking the results \eqref{equation517}, \eqref{equation518}, \eqref{equation519} and \eqref{equation520}, we see that by letting $h\rightarrow0$ in \eqref{equation516} we obtain the weak formulation \eqref{equation515}, hence $u_{Di}^R$ is indeed a weak
solution of \eqref{equation403}.
\end{proof}

In this section we established the weak convergence of a subsequence of our sequence of approximate solutions as the mesh parameter was decreased to zero. The limits were shown to provide a set of weak solutions to the truncated equations \eqref{equation402} and \eqref{equation403}. However, there are a number of questions which remain unanswered which we seek to address in the following section.

\section{Uniqueness and Differentiability of Solutions }
In the previous section, we formed approximate solutions to a truncated version of our system. A subsequence of these approximations was shown to converge to a weak solution to our problem, as the underlying mesh was refined. This convergence of subsequences, rather than the full sequence, raises the possibility of nonunique solutions, with each convergent subsequence possibly offering a different solution. In this section we seek to address this, showing that any limits must coincide, providing a unique solution. Further, we would like to establish whether this solution may in fact display extra regularity, as we might expect from the results in \cite{baird18}. 

\subsection{Continuous Regime}

Returning to equation \eqref{equation40001}, we introduce the space $X_C^R=L_{1}\left((N,R),x\dd x\right)$ with the aim of recasting the equation as an abstract Cauchy problem, as was carried out for \eqref{equation301} in \cite{baird18}. Motivated by the terms appearing on the right-hand side, of \eqref{equation40001} we define the following linear operators $A_C^R$ and $B_C^R$ on the space $X_C^R$:
\begin{equation*}
(A_C^Rf)(x)=-a(x)f(x)\hspace{2mm} \text{and}\hspace{2mm} (B_C^Rf)(x)=\int_{x}^{R}a(y)b(x|y)f(y)\dd y\hspace{2mm} \text{for} \hspace{2mm}N<x<R.
\end{equation*}
with the respective domains
\[D(A_C^R)=\left\lbrace f\in X_C^R:A_C^Rf \in X_C^R\right\rbrace \hspace{6mm}D(B_C^R)=\left\lbrace f\in X_C^R:B_C^Rf \in X_C^R\right\rbrace,\]

Assuming that the functions $a$ and $b$ retain the properties imposed in Remark~\ref{conditions}, in particular $a\in L_{\infty,loc}\left([N,\infty)\right)$ and $b\in L_{\infty,loc}\left([N,\infty)\times [N,\infty)\right)$, then, the following property holds for the operators $A_C^R$ and $B_C^R$.

\begin{lemma} \label{lemma504}The operators $A_C^R$ and $B_C^R$ are bounded linear operators on the space $X_C^R$, with $\left\|B_C^Rf\right\|_{X_C^R}\leq\left\|A_C^Rf\right\|_{X_C^R}$ for all $f\in X_C^R$.
\end{lemma}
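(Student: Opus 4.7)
The plan is to handle $A_C^R$ by a direct estimate using the local boundedness of $a$, and then control $B_C^R$ by $A_C^R$ by switching the order of integration and invoking the mass conservation condition \eqref{equation303}.

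For $A_C^R$, I would first note that since $a \in L_{\infty,loc}([N,\infty))$, on the bounded subinterval $[N,R]$ we have $a$ essentially bounded by the constant $\alpha(R)$ already introduced in Remark~\ref{conditions}. Then for any $f \in X_C^R$,
\begin{equation*}
\|A_C^R f\|_{X_C^R} \,=\, \int_N^R a(x)|f(x)|\,x\,\ddn x \,\leq\, \alpha(R)\,\|f\|_{X_C^R},
\end{equation*}
so $A_C^R$ is a bounded linear operator on all of $X_C^R$ with $D(A_C^R)=X_C^R$ and operator norm at most $\alpha(R)$.

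For $B_C^R$, the idea is to estimate directly using nonnegativity and Fubini's theorem, followed by the mass conservation relation. For $f\in X_C^R$,
\begin{equation*}
\|B_C^R f\|_{X_C^R} \,\leq\, \int_N^R\!\left(\int_x^R a(y)b(x|y)|f(y)|\,\ddn y\right)\!x\,\ddn x \,=\, \int_N^R a(y)|f(y)|\!\left(\int_N^y x\,b(x|y)\,\ddn x\right)\ddn y,
\end{equation*}
where the change in the order of integration is legitimate since the integrand is nonnegative. The bracketed inner integral is controlled by the mass conservation condition \eqref{equation303}: since each $b_j(y)\geq 0$, we have $\int_N^y x\,b(x|y)\,\ddn x \leq y$. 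Substituting this bound yields
\begin{equation*}
\|B_C^R f\|_{X_C^R} \,\leq\, \int_N^R a(y)|f(y)|\,y\,\ddn y \,=\, \|A_C^R f\|_{X_C^R},
\end{equation*}
which is the stated inequality. Combined with the bound on $\|A_C^R f\|_{X_C^R}$, this also shows that $B_C^R$ is a bounded linear operator on $X_C^R$ with $D(B_C^R)=X_C^R$ and operator norm at most $\alpha(R)$.

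No step looks particularly delicate here: the only point requiring a moment of thought is verifying Fubini's hypotheses (trivial by nonnegativity) and remembering that \eqref{equation303} gives $\int_N^y x b(x|y)\,\ddn x \leq y$ rather than equality, the missing mass being accounted for by the discrete contribution $\sum_j j b_j(y)$. Linearity in both cases is immediate from the linearity of the integral and of pointwise multiplication.
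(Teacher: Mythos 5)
Your proof is correct and follows essentially the same route as the paper's: a direct estimate with $\alpha(R)$ for $A_C^R$, then the triangle inequality, Tonelli (justified by nonnegativity), and the mass conservation condition \eqref{equation303} to bound $\int_N^y x\,b(x|y)\,\ddn x$ by $y$ and hence $\|B_C^R f\|_{X_C^R}$ by $\|A_C^R f\|_{X_C^R}$. No discrepancies to report.
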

\begin{proof}We will first consider the operator $A_C^R$. Let $f\in X_C^R$; then we have
\begin{align*}
\left\|A_C^Rf\right\|_{X_C^R}&=\int_N^R \left|a(x) f(x)\right|\,x\dd x\\
&=\int_N^Ra(x)\left|f(x)\right|\,x\dd x.\\
&\leq \alpha(R)\int_N^R\left|f(x)\right|\,x\dd x= \alpha(R)\left\|f\right\|_{X_C^R}.
\end{align*}
Therefore $A_C^R$ is a bounded operator on the space $X_C^R$. The boundedness of $B_C^R$ in $X_C^R$ can be seen as follows. Let $f\in X_C^R$; then
\begin{align*}
\left\|B_C^Rf\right\|_{X_C^R}&=\int_{N}^{R}\left|\int_{x}^{R}a(y)b(x|y)f(y)\dd y\right|x\dd x\nonumber\\
&\leq\int_{N}^{R}\left(\int_{x}^{R}a(y)b(x|y)\left|f(y)\right|\dd y\right)x\dd x\nonumber\\                       &=\int_{N}^{R}a(y)\left|f(y)\right|\left(\int_{N}^{y}xb(x|y)\dd x\right)\dd y\\
                       &\leq \int_{N}^{R}a(y)\left|f(y)\right|y\dd y=\left\|A_C^Rf\right\|_{X_C^R}\leq \alpha(R)\left\|f\right\|_{X_C^R}.\nonumber
\end{align*}
The change in the order of integration can be justified by the nonnegativity of the integrand along with Tonelli's theorem. The inequality in going from the third to the fourth line comes as a result of the mass conservation condition~\eqref{equation303}.
\end{proof}
Equation \eqref{equation40001} is then recast as the following abstract Cauchy problem in the space $X_C^R$:
\begin{equation}\label{equation522}
\frac{\ddn}{\dd t}u_C^R(t)=\left(A_C^R+B_C^R\right)[u_C^R(t)],\hspace{3mm}t>0;\hspace{3mm}u_C^R(0)=c_{0}^R=\chi_{(N,R)}(x)c_{0}(x).
\end{equation}
Here $u_C^R$ denotes an $X_C^R$-valued function rather than the scalar-valued function of two variables from the previous section. However due to the relationship between the spaces $L_1\left(I,L_1(\Omega,\dd \mu)\right)$ and $L_1\left(\Omega\times I,\dd \mu \dd t \right)$, we may switch between the two, with each $L_1$-valued solution to \eqref{equation522} providing us with a scalar-valued solution to \eqref{equation40001} and vice versa.

\begin{lemma}\label{lemma505} The operator $\left(A_C^R+B_C^R\right)$ generates a uniformly continuous semigroup of positive contractions on $X_C^R$.
\end{lemma}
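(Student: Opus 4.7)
The plan is to verify the three properties—uniform continuity, positivity, and contractivity—in turn, exploiting the fact that by Lemma~\ref{lemma504} both $A_C^R$ and $B_C^R$ are already bounded linear operators on $X_C^R$. First, since $A_C^R+B_C^R\in \mathcal{L}(X_C^R)$, standard semigroup theory immediately gives that it generates a uniformly continuous semigroup $(S_C^R(t))_{t\ge 0}$ on $X_C^R$, defined by the exponential series $S_C^R(t)=\exp\bigl(t(A_C^R+B_C^R)\bigr)$. This disposes of the uniform continuity.

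For positivity, I would exploit the decomposition of the generator and a Dyson--Phillips style expansion. The multiplication operator $A_C^R$, being multiplication by $-a(x)$ with $a\ge 0$, generates the positive semigroup $(e^{tA_C^R}f)(x)=e^{-ta(x)}f(x)$, while $B_C^R$ is a positive operator (it maps nonnegative functions to nonnegative functions, since $a$ and $b$ are nonnegative). Writing
\[
S_C^R(t)=\sum_{n=0}^{\infty}T_n(t),\qquad T_0(t)=e^{tA_C^R},\qquad T_{n+1}(t)=\int_0^t e^{(t-s)A_C^R}\,B_C^R\,T_n(s)\dd s,
\]
a straightforward induction shows that each $T_n(t)$ maps the positive cone of $X_C^R$ into itself, and since the series converges in operator norm (by boundedness of the generator), $S_C^R(t)$ is itself positive.

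For contractivity, the key is the mass conservation identity~\eqref{equation303}, used exactly as in the proof of Lemma~\ref{lemma504}. For any nonnegative $f\in X_C^R$, Tonelli's theorem and~\eqref{equation303} give
\[
\int_N^R \bigl((A_C^R+B_C^R)f\bigr)(x)\,x\dd x
= -\int_N^R a(y)f(y)\Bigl(y-\int_N^y x\,b(x|y)\dd x\Bigr)\dd y
= -\int_N^R a(y)f(y)\sum_{j=1}^N j\,b_j(y)\dd y\le 0.
\]
By the positivity already established, $S_C^R(t)f\ge 0$ for $f\ge 0$, hence
\[
\frac{\ddn}{\ddn t}\,\|S_C^R(t)f\|_{X_C^R}
=\int_N^R (A_C^R+B_C^R)\bigl(S_C^R(t)f\bigr)(x)\,x\dd x\le 0,
\]
so $\|S_C^R(t)f\|_{X_C^R}\le\|f\|_{X_C^R}$ for every nonnegative $f$. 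Extending to arbitrary $f\in X_C^R$ by the decomposition $f=f^+-f^-$ and again using positivity of $S_C^R(t)$,
\[
\|S_C^R(t)f\|_{X_C^R}\le \|S_C^R(t)f^+\|_{X_C^R}+\|S_C^R(t)f^-\|_{X_C^R}\le \|f^+\|_{X_C^R}+\|f^-\|_{X_C^R}=\|f\|_{X_C^R},
\]
which is the required contraction property.

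The only subtle point is the contractivity step, which is where the physically meaningful mass conservation condition~\eqref{equation303} enters in a crucial way; all other ingredients are mere bookkeeping given the boundedness from Lemma~\ref{lemma504}. The reduction from general $f$ to the positive cone via positivity of $S_C^R(t)$ is standard for $L_1$-type spaces where $\|f\|=\|f^+\|+\|f^-\|$ holds.
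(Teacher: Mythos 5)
Your proof is correct, but it takes a genuinely different route from the paper's. The paper dispatches uniform continuity exactly as you do (bounded generator), but then obtains positivity and contractivity in one stroke by invoking the Kato--Voigt perturbation theorem as applied in the earlier existence paper: that theorem yields an extension of $A_C^R+B_C^R$ generating a substochastic semigroup, and since the operator is bounded on all of $X_C^R$ the extension is the operator itself, so the substochastic semigroup must coincide with the uniformly continuous one. Your argument instead builds everything by hand: positivity via the Dyson--Phillips series (positivity of the multiplication semigroup $e^{-ta(x)}$ plus positivity of $B_C^R$, propagated through the norm-convergent expansion), and contractivity by integrating the generator against the weight $x$, where Tonelli and the mass-conservation condition~\eqref{equation303} show that
\[
\int_N^R \bigl((A_C^R+B_C^R)f\bigr)(x)\,x\dd x=-\int_N^R a(y)f(y)\sum_{j=1}^N jb_j(y)\dd y\le 0
\]
for $f\ge 0$, followed by the standard $f=f^+-f^-$ reduction. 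What your version buys is transparency and self-containedness: it avoids the substochastic-semigroup machinery entirely, and it makes explicit that the failure of mass conservation within $X_C^R$ is precisely the sink term leaking mass to the discrete regime, which is the physical reason the semigroup is contractive rather than stochastic. What the paper's version buys is brevity and consistency with the operator-semigroup framework already set up in \cite{baird18}, which it reuses again in the subsequent uniqueness arguments. The only point worth flagging in your write-up is the differentiation of $t\mapsto\|S_C^R(t)f\|_{X_C^R}$: this is legitimate because, for $f\ge 0$, positivity gives $\|S_C^R(t)f\|_{X_C^R}=\langle S_C^R(t)f,\mathbf{1}\rangle$, a bounded linear functional composed with a norm-differentiable orbit, but that one-line justification should be stated rather than left implicit.
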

\begin{proof}As a bounded linear operator on the space $X_C^R$, the sum $A_C^R+B_C^R$ generates a uniformly continuous semigroup $\left(T_R(t)\right)_{t\geq0}$ on $X_C^R$, \cite[Chapter 1, Theorem 3.7 and Chapter 2, Corollary 1.5]{engel00}. Furthermore, the Kato--Voigt perturbation theorem, \cite[Corollary 5.17]{banasiak06} as applied in \cite[Theorem 3.2]{baird18}, is readily utilised in the case $\left(A_C^R+B_C^R,X_C^R\right)$ to give us an `extension' of $\left(A_C^R+B_C^R,X_C^R\right)$ as a generator of a substochastic semigroup. Now as $A_C^R+B_C^R$ is defined and bounded on all of $X_C^R$, this extension must be $A_C^R+B_C^R$ itself and by \cite[Theorem 2.6]{pazy83}, the substochastic semigroup generated must be $\left(T_R(t)\right)_{t\geq0}$. Hence $\left(T_R(t)\right)_{t\geq0}$ is a uniformly continuous semigroup of positive contractions.
\end{proof}

From standard results concerning strongly continuous semigroups, \cite[Theorem 2.40 and Theorem 2.41]{morante98}, the existence of the semigroup $\left(T_R(t)\right)_{t\geq0}$ on $X_C^R$, provides a unique strong solution to equation \eqref{equation522}, given by $u_C^R(t)= T_R(t)c_{0}^R$. Additionally, by \cite[Chapter 2, Proposition 6.4]{engel00} this is also a unique mild solution, satisfying an equation of the form
\begin{equation}\label{equation523}
u_C^R(t)=c_{0}^R+\left(A_C^R+B_C^R\right)\int_0^tu_C^R(s)\dd s=c_{0}^R+\int_0^t\left(A_C^R+B_C^R\right)u_C^R(s)\dd s, \hspace{3mm}(t\geq 0).
\end{equation}
We are able to take the operator $A_C^R+B_C^R$ inside the integral as a consequence of its boundedness by applying \cite[Proposition 1.1.7.]{arendt01}. Hence our equation \eqref{equation523} corresponds with the mild solution form of \cite[Definition 1.10]{canizo06}. The article \cite{canizo06} also provides a notion of an $X_C^R$-valued weak solution of equations of type \eqref{equation522}, which we outline here for our specific example.

\begin{definition}\label{definition282} The function  $u_C^R:[0,T)\longrightarrow X_C^R$ is a \emph{weak} solution of \eqref{equation522}, if for all $\phi$ in $L_{\infty}\left((N,R)\right)$ (the dual space of $X_C^R$), we have that $t\rightarrow \langle u_C^R(t),\phi\rangle$ is locally integrable in $(0,T)$ and
\begin{equation}\label{equation282}
\int_0^T\langle u_C^R(s),\phi \rangle \frac{\ddn}{\ddn s}\psi(s)\dd s =-\langle f_0,\phi \rangle\psi(0) - \int_0^T \left\langle \left(A_C^R+B_C^R\right)u_C^R(s),\phi \right\rangle \psi(s)\dd s,
\end{equation}
for all $\psi \in C^\infty\left([0,T)\right)$ with compact support, where $\langle g,\phi\rangle$ denotes the duality pairing of $g$ and $\phi$.
\end{definition}
\begin{remark}\label{remark284}If $D\subseteq {X_C^R}'$ is dense in the weak-$\ast$ topology, then it is sufficient to show that \eqref{equation282} holds for all $\phi \in D$ to establish $u_C^R:[0,T)\rightarrow X_C^R$ as a \emph{weak} solution of \eqref{equation522}; see \cite[Definition 1.9]{canizo06}.
\end{remark}

The results so far have provided us with the existence of unique strong and mild solutions to \eqref{equation522}. We now show that, for our case, any mild solution satisfying \eqref{equation523} must necessarily be a weak solution as in Definition~\ref{definition282} and vice-versa, providing the existence and uniqueness of a weak solution.
\begin{theorem}\label{theorem505}The function  $u_C^R:[0,\infty)\longrightarrow X_C^R$ provided by the semigroup $\left(T_R(t)\right)_{t\geq0}$ is the unique weak solution to equation \eqref{equation522}, satisfying Definition~\ref{definition282} over any time interval $[0,T)$ where $T<\infty$.
\end{theorem}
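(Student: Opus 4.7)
The plan is to verify the theorem in two steps: first, that the semigroup orbit $u_C^R(t):=T_R(t)c_0^R$ satisfies Definition~\ref{definition282}, and second, that any weak solution in the sense of Definition~\ref{definition282} must coincide with it, thereby yielding uniqueness.

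For the first step (existence), the function $u_C^R$ already satisfies the mild identity \eqref{equation523}. Fix $\phi\in L_\infty(N,R)={X_C^R}'$ and pair \eqref{equation523} with $\phi$. Because $A_C^R+B_C^R$ is a bounded linear operator on $X_C^R$ by Lemma~\ref{lemma504}, it commutes with the Bochner integral and with the duality pairing, giving
\[
\langle u_C^R(t),\phi\rangle = \langle c_0^R,\phi\rangle + \int_0^t \bigl\langle (A_C^R+B_C^R)u_C^R(s),\phi\bigr\rangle\,\ddn s.
\]
The right-hand side is absolutely continuous in $t$, so the scalar function $g(t):=\langle u_C^R(t),\phi\rangle$ has $g(0)=\langle c_0^R,\phi\rangle$ and derivative $g'(t)=\langle (A_C^R+B_C^R)u_C^R(t),\phi\rangle$ for a.e.\ $t$. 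For any $\psi\in C^\infty([0,T))$ with compact support in $[0,T)$, classical integration by parts (the boundary term at $T$ vanishes by compact support, the one at $0$ produces $-g(0)\psi(0)$) yields \eqref{equation282} at once. Local integrability of $t\mapsto\langle u_C^R(t),\phi\rangle$ follows from the strong continuity of $(T_R(t))_{t\ge0}$.

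For the second step (uniqueness), let $u_1,u_2$ be two weak solutions and set $v:=u_1-u_2$; by linearity, $v$ satisfies \eqref{equation282} with $f_0=0$. The strategy is to extract a mild-type identity for $v$ by testing \eqref{equation282} against a family of cutoff functions $\psi_\varepsilon\in C^\infty([0,T))$ with $\psi_\varepsilon\equiv 1$ on $[0,t-\varepsilon]$, $\psi_\varepsilon\equiv 0$ on $[t,T)$, and $\psi_\varepsilon'$ supported in $[t-\varepsilon,t]$ with integral $-1$. Passing $\varepsilon\searrow 0$, the Lebesgue differentiation theorem gives, for a.e.\ $t\in(0,T)$ (depending on $\phi$),
\[
\langle v(t),\phi\rangle = \int_0^t \bigl\langle (A_C^R+B_C^R)v(s),\phi\bigr\rangle\,\ddn s.
\]
Since $X_C^R$ is separable and $L_\infty(N,R)$ contains a countable weak-$\ast$ dense subset $D$ (appealing to Remark~\ref{remark284}), we obtain the above identity simultaneously for all $\phi\in D$ and all $t$ outside a common null set, which upgrades (by duality) to the vector-valued identity $v(t)=\int_0^t (A_C^R+B_C^R)v(s)\,\ddn s$ in $X_C^R$ a.e.; continuity of the right-hand side in $t$ then gives it for every $t\in[0,T)$ after passing to a continuous representative. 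Using $\|A_C^R+B_C^R\|\le 2\alpha(R)$ from Lemma~\ref{lemma504}, we deduce $\|v(t)\|_{X_C^R}\le 2\alpha(R)\int_0^t\|v(s)\|_{X_C^R}\,\ddn s$, whence Gronwall's inequality yields $v\equiv 0$ and hence uniqueness.

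The main obstacle I anticipate is the careful passage to the limit $\varepsilon\searrow 0$ and the promotion of the scalar, $\phi$-dependent a.e.\ identity to a vector-valued identity valid for every $t$: this requires a countable weak-$\ast$ dense (norming) subset of $L_\infty(N,R)$, absorption of countably many null sets, and then the continuity argument to obtain a pointwise identity. Once that is in place, the boundedness of $A_C^R+B_C^R$ and Gronwall's lemma dispose of uniqueness essentially for free, and the existence half is routine once absolute continuity of $\langle u_C^R(\cdot),\phi\rangle$ has been noted.
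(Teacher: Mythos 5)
Your argument is correct in outline but follows a genuinely different route from the paper's. The paper does not verify the weak formulation directly or run a Gronwall argument at all: it observes that the semigroup orbit is the unique mild (indeed strong) solution satisfying \eqref{equation523}, checks that $s\mapsto(A_C^R+B_C^R)u_C^R(s)$ is Bochner integrable on $[0,T)$ (using the mass-conservation condition \eqref{equation303} and contractivity of $\left(T_R(t)\right)_{t\geq0}$ to obtain the bound $2\alpha(R)T\left\|c_0^R\right\|_{X_C^R}$), and then invokes \cite[Theorem 1.20]{canizo06}, which asserts that under this integrability hypothesis weak and mild solutions coincide up to modification on null sets. Your proof replaces that citation with a self-contained argument: a direct verification that the mild identity implies \eqref{equation282} via integration by parts, and uniqueness via cutoff test functions, a Lebesgue-point argument, and Gronwall's inequality. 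The paper's route buys brevity and delegates the measure-theoretic bookkeeping to the cited result; yours buys transparency and independence from it, and the existence half in particular is clean and complete.

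The one place where your argument has a genuine gap is the promotion of the scalar identity $\langle v(t),\phi\rangle=\int_0^t\langle(A_C^R+B_C^R)v(s),\phi\rangle\dd s$ to the vector-valued identity $v(t)=\int_0^t(A_C^R+B_C^R)v(s)\dd s$, together with the subsequent Gronwall step: both require $s\mapsto\left\|v(s)\right\|_{X_C^R}$ to be locally integrable, whereas Definition~\ref{definition282} as stated only guarantees local integrability of each scalar pairing $s\mapsto\langle v(s),\phi\rangle$, which does not by itself yield local Bochner integrability of $v$ (a countable norming set gives measurability of the norm, not its integrability). Without this, the inequality $\left\|v(t)\right\|_{X_C^R}\le2\alpha(R)\int_0^t\left\|v(s)\right\|_{X_C^R}\dd s$ may have an infinite right-hand side and Gronwall gives nothing. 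This is precisely the integrability hypothesis carried by \cite[Theorem 1.20]{canizo06}; note that the paper's discrete-regime analogue, Theorem~\ref{theorem603}, explicitly inserts the word ``integrable'' into its hypothesis for the same reason. Your proof is repaired either by adding $v\in L_1((0,T),X_C^R)$ as a standing assumption on weak solutions (as the cited framework effectively does) or by supplying an argument that the weak formulation forces it; as written, the Gronwall step is not justified for an arbitrary weak solution in the literal sense of Definition~\ref{definition282}.
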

\begin{proof}From the analysis above we have the semigroup $\left(T_R(t)\right)_{t\geq0}$ providing a unique mild (strong) solution $u_C^R:[0,\infty)\longrightarrow X_C^R$ to equation \eqref{equation522}. The result \cite[Theorem 1.20]{canizo06} tells us that if the right-hand side of \eqref{equation522} is integrable then, given either a weak or mild solution to our evolution equation \eqref{equation522}, the solution can be modified on a set of measure zero to obtain a solution of the other form. Therefore in our case, if the conditions of \cite[Theorem 1.20]{canizo06} hold, the existence of a unique mild solution will provide a unique weak solution (as given in  Definition~\ref{definition282}) on each finite time interval $[0,T)$, with this solution being given by the semigroup $\left(T_R(t)\right)_{t\geq0}$.

Let $0<T<\infty$; and let us consider the following integral:
\begin{align*}
&\int_0^T\left\|(A_C^R+B_C^R)u_C^R(s)\right\|_{X_C^R}\dd s\\
&=\int_0^T\hspace{-.5mm} \int_N^R \bigg|-a(x)(u_C^R(s))(x)
+\int_{x}^{R}a(y)b(x|y)(u^R_C(s))(y)\dd y\bigg|\,x\dd x \dd s\\
&\leq \alpha(R)\hspace{-.8mm}\int_0^T \hspace{-1.5mm}\int_N^R \hspace{-.5mm}\left|(u_C^R(s))(x)\right|\,x\dd x\dd s
+\alpha(R)\hspace{-.7mm}\int_0^T\hspace{-1.5mm}\int_N^R\hspace{-1.5mm}\int_{x}^{R}b(x|y)\left|(u_C^R(s))(y)\right|\dd y\,x\dd x\dd s\\
&= \hspace{-.5mm}\alpha(R)\hspace{-1mm}\int_0^T\hspace{-1.5mm}\int_N^R  \hspace{-1mm}\left|(u_C^R(s))(x)\right|\,x\dd x\dd s
+\alpha(R) \hspace{-1mm}\int_0^T  \hspace{-1.5mm}\int_N^R \left|(u_C^R(s))(y)\right|\int_{N}^{y}\hspace{-.7mm}b(x|y)\,x\dd x\dd y\dd s\\
&\leq 2\alpha(R)\int_0^T \int_N^R \left|(u_C^R(s))(x)\right|\,x\dd x\dd s= 2\alpha(R)\int_0^T \left\|T_R(s)c_0^R\right\|_{X_C^R}\dd s\leq 2\alpha(R)T \left\|c_0^R\right\|_{X_C^R}<\infty.
\end{align*}
In going to the final line, the bounding of the inner integral of the second term from the previous line comes from the mass conservation condition~\eqref{equation303},
whilst the subsequent norm inequality relies on the fact that the semigroup $\left(T_R(t)\right)_{t\geq0}$ consists of contractions.

By \cite[Chapter 2, Theorem 2 and Theorem 4]{diestel77}, the above bound implies that the right-hand side of equation \eqref{equation522} is integrable over the interval $[0,T)$ and so by \cite[Theorem 1.20]{canizo06}, any mild or weak solutions must agree (up to sets of measure zero). Therefore equation \eqref{equation522} has a unique weak solution given by $u_C^R(t)=T_R(t)c_0^R$,
which is in fact a strong solution.
\end{proof}

Let $u_C^R(x,t)$ be a scalar representation of the semigroup solution $(u_C^R(t))(x)=\left(T_R(t)c_0^R\right)(x)$. Then
applying Definition \ref{definition282} to our example and noting the equivalence of \eqref{equation40001} and \eqref{equation402} as detailed fully in \cite[Appendix C]{bairdphd}, we get that equation \eqref{equation282} is equivalent to
\begin{align}\label{equation524}
&\int_{0}^{T}\int_{N}^{R} xu_C^R(x,t)\frac{\partial\varphi}{\partial t}(x,t)\dd x\dd t
+\int_{N}^{R}xu_C^R(x,0)(x)\varphi(x,0)\dd x\nonumber\\
&=\int_{0}^{T}\int_{N}^{R}\mathcal{F}^R\left(xu_C^R(x,t)\right)\hspace{-.6mm}(x,t)\hspace{1mm}\frac{\partial\varphi}{\partial x}(x,t) \dd x \dd t\nonumber\\
&\hspace{45mm}+\int_{0}^{T}\int_{N}^{R}S(xu_C^R(x,t))(x,t)\varphi(x,t)\dd x\dd t,
\end{align}
for all $\varphi$ of the form $\varphi(x,t)=\phi(x)\psi(t)$ where $\phi\in C^{\infty}_c\left((N,R)\right)$ and $\psi \in C^{\infty}_c\left([0,T)\right)$, due to the weak-$\ast$ density of $C^{\infty}_c\left((N,R)\right)$ in $L_{\infty}\left((N,R)\right)$, via Remark \ref{remark284}.

Having determined a one-to-one correspondence between scalar-valued weak solutions satisfying \eqref{equation524} and $X_C^R$-valued strong solutions of the abstract Cauchy problem \eqref{equation522}, we are now in a position to establish the uniqueness and differentiability of the weak solutions of the previous section.

\begin{theorem}\label{theorem506}The weak solution obtained as the limit of the sequence of approximate solutions for the continuous
regime in Theorems~\ref{theorem501} and ~\ref{theorem502} is unique, continuously differentiable with respect to $t$ on any interval $[0,T)$
and satisfies equation \eqref{equation40001}
directly, except perhaps on a set of measure zero.
\end{theorem}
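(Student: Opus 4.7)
The plan is to identify the scalar weak solution $u_C^R$ obtained in Theorem~\ref{theorem502} with the semigroup solution $T_R(t)c_0^R$ delivered by Lemma~\ref{lemma505}, and then transfer the regularity of the latter back to $u_C^R$. The key step is to show that $u_C^R$ qualifies as an abstract weak solution of the Cauchy problem \eqref{equation522} in the sense of Definition~\ref{definition282}, after which Theorem~\ref{theorem505} forces agreement with $T_R(t)c_0^R$ almost everywhere.

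To carry this out, I would specialise the test functions in \eqref{equation505} to the separated form $\varphi(x,t)=\phi(x)\psi(t)$ with $\phi\in C_c^\infty((N,R))$ and $\psi\in C_c^\infty([0,T))$; such a $\varphi$ certainly lies in $C_c^2([N,R]\times[0,T))$. Because $\phi$ vanishes at $N$ and $R$ and $\mathcal{F}^R(xu_C^R)(\cdot,t)$ also vanishes at the endpoints by Lemma~\ref{lastthing}, an integration by parts in $x$ on the flux term, combined with the equivalence between \eqref{equation40001} and \eqref{equation402} established in \cite[Appendix C]{bairdphd}, recasts the right-hand side of \eqref{equation505} as $-\int_0^T\langle (A_C^R+B_C^R)u_C^R(t),\phi\rangle\psi(t)\dd t$, where the pairing is that between $X_C^R$ and its dual $L_\infty((N,R))$. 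The resulting identity is precisely \eqref{equation282} restricted to $\phi\in C_c^\infty((N,R))$. Local integrability of $t\mapsto\langle u_C^R(t),\phi\rangle$ is immediate from $u_C^R\in L_1((N,R)\times[0,T),x\dd x\dd t)$ via Fubini, and the weak-$\ast$ density of $C_c^\infty((N,R))$ in $L_\infty((N,R))$, invoked via Remark~\ref{remark284}, extends the identity to every $\phi$ in the full dual space.

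At this point Theorem~\ref{theorem505} yields $u_C^R(t)=T_R(t)c_0^R$ for almost every $t\in[0,T)$. Since $A_C^R+B_C^R$ is a bounded operator on $X_C^R$ (Lemma~\ref{lemma504}), the semigroup $(T_R(t))_{t\geq 0}$ is uniformly continuous and $t\mapsto T_R(t)c_0^R$ is continuously differentiable on $[0,\infty)$ with derivative $(A_C^R+B_C^R)T_R(t)c_0^R$. Modifying $u_C^R$ on a null set thus produces a representative that is $C^1$ in $t$ and satisfies \eqref{equation40001} pointwise almost everywhere in $x$. Finally, since every weakly convergent subsequence of $\{u_C^h\}$ must, by the same identification applied to its limit, coincide with this single function $T_R(t)c_0^R$, a standard argument forces the whole family to converge weakly to the same $u_C^R$, thereby giving uniqueness. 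The step I expect to require most care is the integration by parts just described: with only $L_1$ regularity for $u_C^R$, the distributional move from \eqref{equation505} to the abstract weak formulation relies crucially on the vanishing-flux Lemma~\ref{lastthing} and on the equivalence of \eqref{equation40001} and \eqref{equation402} from the cited appendix; once that bridge is in place, the rest follows directly from the standard regularity theory for uniformly continuous semigroups.
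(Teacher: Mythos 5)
Your proposal is correct and follows essentially the same route as the paper: specialise the test functions in \eqref{equation505} to the separated form $\phi(x)\psi(t)$, use the equivalence of \eqref{equation40001} and \eqref{equation402} together with the weak-$\ast$ density of $C_c^\infty((N,R))$ in $L_\infty((N,R))$ to recognise the scalar weak solution as an abstract weak solution in the sense of Definition~\ref{definition282}, and then invoke Theorem~\ref{theorem505} to identify it with the strong semigroup solution $T_R(t)c_0^R$, from which uniqueness, $C^1$ regularity in $t$ and the pointwise validity of \eqref{equation40001} all follow. Your closing observation that the whole family $\{u_C^h\}$ (not merely a subsequence) must then converge weakly to this single limit is a small, valid addition beyond what the paper states explicitly.
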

\begin{proof} It is easily seen that any scalar-valued function $u_C^R(x,t)$ satisfying Definition~\ref{definition501} will immediately
satisfy the equation \eqref{equation524} above. From any such scalar-valued function we may define a function
$u_C^R:[0,T)\longrightarrow X_C^R$ via $\left(u_C^R (t) \right)(x):=u_C^R(x,t)$, for almost all $(x,t)\in (N,R)\times[0,T)$. Since the
scalar function satisfies equation \eqref{equation524}, the $X_C^R$-valued function must provide a weak solution, as defined in
Definition~\ref{definition282}, to the abstract Cauchy problem \eqref{equation522}. By Theorem \ref{theorem505}, this weak solution must
necessarily also be the unique (up to sets of measure zero) strong solution of \eqref{equation522}. Hence for each scalar weak solution
$u_C^R(x,t)$ satisfying Definition~\ref{definition501} there is a unique corresponding $X_C^R$-valued strong solution $u_C^R$ to the
associated abstract Cauchy problem, whereby the original function $u_C^R(x,t)$ is one scalar representation of the strong solution.
Since the strong solution is unique, as are its scalar representations (up to measure zero), it follows that the weak solution must be
unique, up to a set of measure zero. Further, as a representation of a strongly differentiable $X_C^R$-valued function, by \cite[Theorem 2.40]{banasiak06}, $u_C^R(x,t)$ is continuously differentiable with respect to $t$, except perhaps on a set of zero measure. Further, by a similar argument as applied at the end of \cite[Theorem 8.3]{banasiak06} or directly as in \cite[Theorem 3.2.7.]{bairdphd}, $u_C^R(x,t)$ can be seen to directly satisfy equation \eqref{equation40001} almost everywhere.
\end{proof}

This result greatly strengthens those of the previous section, where before we had only the existence of a (weakly) convergent
subsequence and the possibility of the numerical scheme converging to multiple weak solutions. We now know that the limit solution must necessarily be unique, continuously differentiable with respect to $t$ and a solution, in the classical sense, of the truncated
fragmentation equation.

\subsection{Discrete Regime}

Recalling the truncated discrete regime equation \eqref{equation403}, we have for $i=1,\ldots,N$:
\begin{align}\label{equation550}
\hspace{-80mm}\frac{\ddn u_{Di}^R(t)}{\ddn t}
&\hspace{-.7mm}=\hspace{-.7mm}-a_iu_{Di}^R(t)+\hspace{-1.9mm}\sum_{j=i+1}^{N}\hspace{-1.2mm}a_jb_{i,j}u_{\small{D}}^R(t)_j+\hspace{-1mm}\int_{N}^{R}a(y)b_i(y)u_{\small{C}}^R(y,t)\dd y,\hspace{.7mm}\hspace{1.1mm}t>0,\\
 u_{D}^R(0)&=d_{0}.\nonumber
\end{align}
With the aim of recasting these equations as an inhomogeneous abstract Cauchy problem as in \cite{baird18}, we introduce the space $X_D=\mathbb{R}^N$, equipped with the weighted norm:
\[\left\|v\right\|_{X_D}=\sum_{j=1}^{N}j|v_j|,\hspace{2mm} \text{where} \hspace{2mm}v=(v_1,\ldots,v_N).\]
The equations \eqref{equation550} then become
\begin{equation}\label{equation551}
\frac{\ddn}{\ddn t}u_D^R(t)=(A_D+B_D)[u_D^R(t)]+C_R[u_C^R(t)],\hspace{3mm}t>0;\hspace{3mm}u_D^R(0)=d_{0},
\end{equation}
where $A_D$ and $B_D$ are defined on $X_D$, by
\[(A_Dv)_i=-a_iv_{i}\hspace{2mm}\text{and}\hspace{2mm}(B_Dv)_i=\sum_{j=i+1}^{N}a_jb_{i,j}v_{j},\hspace{2mm}\text{for}\hspace{2mm}i=1,
\ldots,N,\]
$u_C^R$ is the truncated semigroup solution of \eqref{equation522} and where $C_R \hspace{-0.5mm}:\hspace{-0.5mm} D(C_R)\subseteq X_C^R \rightarrow X_D $ is given by
\[(C_Rf)_i=\int_{N}^{R}a(y)b_i(y)f(y)\dd y, \hspace{3mm} D(C_R)=\left\lbrace f\in X_C^R:C_Rf \in X_D\right\rbrace=X_C^R,\]
for $i=1,\ldots,N$. The fact that $D(C_R)=X_C^R$ is a consequence of the $L_{\infty,loc}$ boundedness of $a$ and each $b_i$. Recall that since the space $X_D$ is finite-dimensional, the operators $A_D$ and $B_D$ are bounded and therefore by \cite[Chapter 1, Theorem 3.7 and Chapter 2, Corollary 1.5]{engel00} their sum must generate a uniformly continuous semigroup $\left(T(t)\right)_{t\geq0}$ on $X_D$. We then consider the term
$C_R[u_C^R(t)]$ as a perturbation and \eqref{equation551} as an inhomogeneous abstract Cauchy problem. The following lemma establishes the differentiability of this perturbation term, a property that we will require in showing the existence of solutions of equation \eqref{equation551}, along with determining their nature.

\begin{lemma}\label{lemma601} The term $C_R[u_C^R(t)]$ from \eqref{equation551} is strongly differentiable (in the space $X_D$) with respect to $t$, at almost all points of $[0,T]$. Furthermore its derivative is given by $C_R[\frac{\ddn}{\ddn t}u_C^R(t)]$.
\end{lemma}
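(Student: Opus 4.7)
The plan is to reduce this to two standard facts: that $C_R$ is a bounded linear operator from $X_C^R$ into $X_D$, and that bounded linear operators commute with strong differentiation of vector-valued functions. Once both are in hand, the result follows by applying $C_R$ to difference quotients of $u_C^R(t)$.

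First I would check boundedness of $C_R$. For $f\in X_C^R$, using the essential bounds $\alpha(R)$ of $a$ and $\gamma_i(R)$ of $b_i$ on $[N,R]$ (guaranteed by the $L_{\infty,loc}$ hypotheses of Remark~\ref{conditions} together with the $b_i$ clause), each component satisfies
\[
|(C_Rf)_i|\le\int_N^R a(y)b_i(y)|f(y)|\dd y\le \alpha(R)\gamma_i(R)\int_N^R|f(y)|\dd y.
\]
Invoking the norm equivalence of Theorem~\ref{theorem201} to pass from the Lebesgue norm on $(N,R)$ to the weighted norm on $X_C^R$, one obtains $\|C_Rf\|_{X_D}\le M(R)\|f\|_{X_C^R}$ for a finite constant $M(R)$ depending only on $R$, $N$, $\alpha(R)$ and $\gamma_1(R),\dots,\gamma_N(R)$. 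Hence $C_R$ is a bounded linear operator from $X_C^R$ into $X_D$.

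Next I would invoke the differentiability of the continuous-regime solution in the abstract sense. By Lemma~\ref{lemma504}, the generator $A_C^R+B_C^R$ is bounded on $X_C^R$, so by Lemma~\ref{lemma505} the semigroup $(T_R(t))_{t\ge 0}$ is uniformly continuous; consequently $t\mapsto u_C^R(t)=T_R(t)c_0^R$ is continuously (strongly) differentiable on $[0,T]$ as an $X_C^R$-valued function, with $\tfrac{\ddn}{\ddn t}u_C^R(t)=(A_C^R+B_C^R)u_C^R(t)$.

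The final step is the standard commutation. For $t,\,t+h\in[0,T]$ with $h\neq0$, linearity of $C_R$ yields
\[
\frac{C_R[u_C^R(t+h)]-C_R[u_C^R(t)]}{h}=C_R\!\left[\frac{u_C^R(t+h)-u_C^R(t)}{h}\right],
\]
and by continuity of $C_R$ the right-hand side converges in $X_D$ to $C_R\bigl[\tfrac{\ddn}{\ddn t}u_C^R(t)\bigr]$ as $h\to 0$. This proves strong differentiability of $C_R[u_C^R(t)]$ in $X_D$ with the asserted derivative. The ``almost all $t\in[0,T]$'' qualification is harmless: the abstract differentiability in fact holds at every $t$, and the measure-zero exceptional set simply accommodates the ambiguity noted in Theorem~\ref{theorem506} between an abstract $X_C^R$-valued solution and its scalar representative. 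No substantive obstacle is expected; the one point requiring care is the boundedness of $C_R$, which relies on combining the local $L_\infty$ hypotheses with the norm equivalence of Theorem~\ref{theorem201}.
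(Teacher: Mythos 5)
Your proposal is correct and follows essentially the same route as the paper: both arguments amount to showing that $C_R$ is a bounded (hence continuous) linear operator from $X_C^R$ into $X_D$ and then commuting it with the difference quotient of the strongly differentiable $u_C^R(t)$. The only cosmetic difference is that the paper absorbs the weight $y$ via the bound $b_i(y)\leq y$ from \eqref{equation303} instead of invoking the norm equivalence of Theorem~\ref{theorem201}; both estimates are valid under the stated hypotheses.
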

\begin{proof} Recalling the $L_{\infty,loc}$ boundedness of $a$ and the bound $b_i(y)\leq y$ for each $i\in \left\lbrace1,2,\ldots,N\right\rbrace$, which is easily derived from \eqref{equation303}, we have that

\begin{align*}
&\left|\frac{\left(C_R[u_C^R(t+h)]\right)_i-\left(C_R[u_C^R(t)]\right)_i}{h}-\left(C_R\left[\frac{\ddn}{\ddn t}u_C^R(t)\right]\right)_i\right|\\
&=\left| \int_{N}^{R} a(y)b_i(y)\left(\frac{\left(u_C^R(t+h)\right)\hspace{-1.5mm}(y)-\left(u_C^R(t)\right)\hspace{-1.5mm}(y)}{h}
-\left[ \frac{\ddn}{\ddn t}u_C^R(t)\right]\hspace{-1mm}(y)\right) \dd y\right|\\
&\leq \alpha(R)\int_{N}^{R}\left|\frac{\left(u_C^R(t+h)\right)(y)-\left(u_C^R(t)\right)(y)}{h}-\left[ \frac{\ddn}{\ddn t}
u_C^R(t)\right]\hspace{-1mm}(y)\right|\,y\dd y\\
&=\alpha(R)\left\|\frac{u_C^R(t+h)-u_C^R(t)}{h}-\frac{\ddn}{\ddn t}u_C^R(t)\right\|_{X_C^R},
\end{align*}
where $\alpha(R)$ is the essential supremum of $a$ over $[N,R]$. From the differentiability of $u_C^R$, by
letting $h \rightarrow 0$ on both sides of the above calculation, we may deduce that $C_R[u_C^R(t)]$ is differentiable (in the space $X_D$) at almost
all points of $[0,T]$, with derivative $C_R[\frac{\ddn}{\ddn t}u_C^R(t)]$.
\end{proof}

Having shown the differentiability of the perturbation term $C_R[u_C^R(t)]$, we now look at its derivative more closely, showing that
it is integrable, belonging to the space $L_1((0,T),X_D)$ and in doing so establish the existence of a unique strong solution to
equation~\eqref{equation551}.

\begin{theorem}\label{theorem602}The derivative of $C_R[u_C^R(t)]$ belongs to the space $L_1((0,T),X_D)$.
As such equation~\eqref{equation551} has a unique strong solution, which is given by
\begin{equation}\label{solutionform}
u_{D}^R(t)=T(t)d_{0}+\int_0^t T(t-s)C_R[u_C^R(s)]\dd s.
\end{equation}
\end{theorem}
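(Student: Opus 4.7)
The plan is to prove the integrability of the derivative by a direct computation using the mass conservation condition~\eqref{equation303}, which will give a uniform bound on $C_R$ as an operator from $X_C^R$ to $X_D$. Then existence, uniqueness and the variation of parameters formula will follow from standard inhomogeneous Cauchy problem theory, using this integrability together with Lemma~\ref{lemma601}.

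First I would establish an operator bound for $C_R$: for any $g \in X_C^R$, using~\eqref{equation303} inside the sum
\[
\|C_R g\|_{X_D} = \sum_{i=1}^{N} i\,\bigl|(C_R g)_i\bigr| \leq \int_N^R a(y)\left(\sum_{i=1}^{N} i\,b_i(y)\right)|g(y)|\dd y \leq \alpha(R)\,\|g\|_{X_C^R},
\]
since $\sum_{i=1}^{N} i\,b_i(y) \leq y$ by~\eqref{equation303}. By Theorem~\ref{theorem506}, $u_C^R$ is a strong (in fact classical) solution of~\eqref{equation522}, so $\frac{\ddn}{\ddn t}u_C^R(t)=\left(A_C^R+B_C^R\right)u_C^R(t)$ for almost every $t$; combining Lemma~\ref{lemma504} with the contraction property of $(T_R(t))_{t\geq 0}$ yields
\[
\left\|\tfrac{\ddn}{\ddn t}u_C^R(t)\right\|_{X_C^R}\leq 2\alpha(R)\,\|u_C^R(t)\|_{X_C^R} \leq 2\alpha(R)\,\|c_0^R\|_{X_C^R}.
\]

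Next I would invoke Lemma~\ref{lemma601}, which identifies the $X_D$-valued derivative of $t\mapsto C_R[u_C^R(t)]$ with $C_R[\tfrac{\ddn}{\ddn t}u_C^R(t)]$ at almost every $t$. Applying the operator bound above pointwise and then integrating gives
\[
\int_0^T\left\|\tfrac{\ddn}{\ddn s}C_R[u_C^R(s)]\right\|_{X_D}\dd s \leq \alpha(R)\int_0^T \left\|\tfrac{\ddn}{\ddn s}u_C^R(s)\right\|_{X_C^R}\dd s \leq 2\alpha(R)^2 T\,\|c_0^R\|_{X_C^R}<\infty,
\]
which is the first claim of the theorem. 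Incidentally, the same operator bound applied to $u_C^R(t)$ itself shows that $C_R[u_C^R(\cdot)]$ is bounded (hence belongs to $L_1((0,T),X_D)$) and, being a.e.~differentiable with an $L_1$ derivative, is absolutely continuous on $[0,T]$.

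Finally, the existence and uniqueness of a strong solution of \eqref{equation551} follow from the standard theory of inhomogeneous abstract Cauchy problems: $A_D+B_D$ is a bounded operator on the finite-dimensional space $X_D$ and so generates the uniformly continuous semigroup $(T(t))_{t\geq 0}$; since the inhomogeneity $C_R[u_C^R(\cdot)]$ lies in $W^{1,1}((0,T),X_D)$ by the above, classical results (for instance \cite[Chapter 4, Corollary 2.5 and Corollary 2.10]{pazy83}) give a unique strong solution represented by the variation of parameters formula~\eqref{solutionform}. The main point requiring care, and what justifies isolating the integrability assertion as the first half of the theorem, is precisely the verification that the forcing term has enough regularity to produce a strong, rather than merely mild, solution; this is where the passage through Lemma~\ref{lemma601} and the contractivity of $(T_R(t))_{t\geq 0}$ is essential, since one does not directly see pointwise differentiability of $C_R[u_C^R(t)]$ from its definition as an integral.
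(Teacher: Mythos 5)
Your proposal is correct and follows essentially the same route as the paper: identify the derivative via Lemma~\ref{lemma601}, bound $\left\|\tfrac{\ddn}{\ddn t}u_C^R(t)\right\|_{X_C^R}$ using the boundedness of $A_C^R+B_C^R$ and the contractivity of $(T_R(t))_{t\geq0}$, integrate, and conclude with the standard inhomogeneous Cauchy problem results from Pazy. Your only deviation is cosmetic: by using the full mass conservation sum $\sum_{i}i\,b_i(y)\leq y$ you obtain the sharper operator bound $\|C_R\|\leq\alpha(R)$ where the paper settles for $\alpha(R)N^2$, which changes nothing in the conclusion.
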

\begin{proof}If we take the $X_D$-norm of the derivative $\frac{\ddn}{\ddn t}C_R[u_C^R(t)]$ established in the previous lemma and integrate from $0$ to $T$, then we obtain
\begin{align}\label{equation604}
&\int_0^T \left\|\frac{\ddn}{\ddn t}C_R\left[u_C^R(t)\right]\right\|_{X_D}\,\hspace{-2mm}\dd t
=\int_0^T \left\|C_R\left[\frac{\ddn}{\ddn t}u_C^R(t)\right]\right\|_{X_D}\hspace{-1mm}\dd t\nonumber\\
&=\int_0^T \sum_{i=1}^Ni\left|\int_N^R a(y)b_i(y)\left[\frac{\ddn}{\ddn t}u_C^R(t)\right](y)\dd y\right|
\dd t\nonumber\\
&\leq\alpha(R)N^2\int_0^T\left\lbrace \int_N^R \left|\left[\frac{\ddn}{\ddn t}u_C^R(t)\right](y)\right|\,y\dd y
\right\rbrace\dd t\nonumber\\
&=\alpha(R)N^2 \int_0^T \left\| \frac{\ddn} {\ddn t}u_C^R(t) \right\|_{X_C^R}\hspace{-2mm}\dd t.
\end{align}
Recalling the abstract Cauchy problem \eqref{equation522}, since the operators $A_C^R$ and $B_C^R$ are bounded and since $u_C^R$ is given by a contraction semigroup, we have
\begin{align*}
\left\|\frac{\ddn}{\ddn t}u_C^R(t)\right\|_{X_C^R}\leq\left\|A_C^R+B_C^R\right\|\left\|u_C^R(t)\right\|_{X_C^R}\leq M \left\|c_0\right\|_{X_C^R},
\end{align*}
where $M$ is a constant such that $\left\|A_C^R+B_C^R\right\|\leq M$. Inserting this into \eqref{equation604} gives us
\begin{align*}
\int_0^T\left\|\frac{\ddn}{\ddn t}C_R[u_C^R(t)]\right\|_{X_D}\,\hspace{-4mm}\dd t&\leq \alpha(R)N^2\int_0^T M \left\|c_0\right\|_{X_C^R}\dd s\leq \alpha(R)N^2TM \left\|c_0\right\|_{X_C^R}<\infty.
\end{align*}
Therefore the derivative of $C_R[u_C^R(t)]$ belongs to the space $L_1((0,T),X_D)$; hence, by \cite[Chapter 4, Corollary 2.2 and Corollary 2.10]{pazy83}, the equation~\eqref{equation551} has a unique strong solution $u_{D}^R:[0,T)\rightarrow X_D$ given by \eqref{solutionform}.
\end{proof}

Having established the existence of a unique strong solution to equation~\eqref{equation551} given by \eqref{solutionform}, this solution must also provide us with a unique mild solution to our equation. Now we consider the possibility of weak solutions, as defined in Definition~\ref{definition282}. We aim to show that any weak solution of equation~\eqref{equation551} must also be a mild solution (permitting changes on sets of measure zero), and hence the weak solution must be unique and differentiable.

\begin{theorem}\label{theorem603}Given an integrable weak solution $u_{D}^R:[0,T)\rightarrow X_D$ of equation~\eqref{equation551} as defined in
Definition~\ref{definition282}, then it must also be a strong solution. Therefore any integrable weak solution must be unique up to sets of measure zero and differentiable in $X_D$.
\end{theorem}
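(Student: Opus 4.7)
The plan is to mirror the argument used in Theorem~\ref{theorem505} for the continuous regime: I will invoke \cite[Theorem 1.20]{canizo06} to show that, provided the right-hand side of the inhomogeneous abstract Cauchy problem \eqref{equation551} is Bochner integrable on $[0,T)$, any integrable weak solution must coincide, up to modification on a set of measure zero, with a mild solution. The uniqueness and differentiability then follow directly by combining this with Theorem~\ref{theorem602}.

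First I would verify the integrability of the right-hand side $(A_D+B_D)[u_D^R(t)] + C_R[u_C^R(t)]$ in $X_D$. The homogeneous part is immediate: since $A_D+B_D$ is a bounded linear operator on the finite-dimensional space $X_D$,
\[\int_0^T \|(A_D+B_D)u_D^R(s)\|_{X_D}\dd s \leq \|A_D+B_D\|\int_0^T \|u_D^R(s)\|_{X_D}\dd s,\]
which is finite by the assumed integrability of $u_D^R$. For the inhomogeneous part, exploiting $a \in L_{\infty,loc}$ with essential supremum $\alpha(R)$ on $[N,R]$ and the pointwise bound $b_i(y) \leq y$ derived from the mass conservation condition~\eqref{equation303}, I would estimate
\[\|C_R[u_C^R(s)]\|_{X_D} = \sum_{i=1}^{N} i \left| \int_N^R a(y)b_i(y)(u_C^R(s))(y)\dd y \right| \leq \alpha(R)\tfrac{N(N+1)}{2}\|u_C^R(s)\|_{X_C^R}.\]
Since $u_C^R$ is generated by a contraction semigroup, $\|u_C^R(s)\|_{X_C^R} \leq \|c_0^R\|_{X_C^R}$, and integrability on $[0,T)$ follows.

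With the right-hand side established as integrable, \cite[Theorem 1.20]{canizo06} asserts that any integrable weak solution of \eqref{equation551} satisfying Definition~\ref{definition282} must agree almost everywhere with a mild solution. By Theorem~\ref{theorem602}, the mild solution is unique and given by \eqref{solutionform}, and, owing to the $L_1$-regularity of $\tfrac{\ddn}{\ddn t}C_R[u_C^R(t)]$ established there (via Lemma~\ref{lemma601}), it is in fact a strong solution. Consequently any integrable weak solution, after adjustment on a set of Lebesgue measure zero, is the unique strong solution, which yields both uniqueness and strong differentiability in $X_D$.

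I do not anticipate a substantial obstacle here: the estimates are direct consequences of the bounded and finite-dimensional setting, and the main subtlety is simply ensuring that the weak formulation of Definition~\ref{definition282}, transported verbatim to the inhomogeneous problem \eqref{equation551}, places the perturbation term $C_R[u_C^R(t)]$ on the same footing as $(A_D+B_D)[u_D^R(t)]$, so that \cite[Theorem 1.20]{canizo06} can be applied without modification.
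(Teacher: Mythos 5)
Your proposal is correct and follows essentially the same route as the paper's own proof: you bound the $X_D$-norm of the right-hand side of \eqref{equation551} using the boundedness of $A_D+B_D$ together with the assumed integrability of $u_D^R$, the bound $b_i(y)\leq y$ from \eqref{equation303}, and the contraction property of the semigroup giving $u_C^R$, and then apply \cite[Theorem 1.20]{canizo06} and Theorem~\ref{theorem602} to identify the weak solution with the unique strong solution. The only (immaterial) difference is the constant in the estimate for $\|C_R[u_C^R(s)]\|_{X_D}$, where you obtain $\alpha(R)N(N+1)/2$ in place of the paper's $\alpha(R)N^2$.
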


\begin{proof}Considering the right-hand side of equation \eqref{equation551}, taking the norm in $X_D$ and integrating from
$0$ to $T$ gives us
\begin{align*}
&\int_0^T \left\|(A_D+B_D)[u_D^R(t)]+C_R[u_C^R(t)]\right\|_{X_D} \dd t\\
&\leq \int_0^T \left\|(A_D+B_D)[u_D^R(t)]\right\|_{X_D} \dd t+\int_0^T \left\|C_R[u_C^R(t)]\right\|_{X_D} \dd t\\
&= \int_0^T \left\|(A_D+B_D)[u_D^R(t)]\right\|_{X_D} \dd t+\int_0^T \sum_{i=0}^Ni\left|\int_N^R a(y)b_i(y)(u_C^R(t))(y)\dd y
\right|\dd t\\
&\leq   \int_0^T\left\|A_D+B_D\right\|\left\|u_D^R(t)\right\|_{X_D}\dd t
+\alpha(R)N^2\int_0^T \int_N^R \left|(u_C^R(t))(y)\right|\,y\dd y
\dd t\\
&\leq \left\|A_D+B_D\right\|  \int_0^T\left\|u_D^R(t)\right\|_{X_D}\dd t+\alpha(R)N^2\int_0^T \left\|u_C^R(t)\right\|_{X_C^R} \dd t.
\end{align*}
The assumption that $u_D^R(t)$ is integrable and  \cite[Chapter 2, Theorem 2 and Theorem 4]{diestel77} allow us to deduce that the first of these integrals must be finite,
whilst recalling that $u_C^R(t)$ was given by a contraction semigroup immediately enables us to bound the second integral above.
Hence,  \cite[Chapter 2, Theorem 2 and Theorem 4]{diestel77}, the right-hand side of equation~\eqref{equation551} is integrable and therefore \cite[Theorem 1.20]{canizo06}
tells us that the weak solution $u_D^R(t)$ (allowing for changes on sets on measure zero) must also be a mild solution.
Since, by Theorem~\ref{theorem602}, equation~\eqref{equation551} has a unique mild solution which is in fact a strong solution, the weak solution
$u_D^R(t)$ we started with must agree with the strong solution (up to sets of measure zero) and therefore is unique and differentiable.
\end{proof}

Having established that any integrable weak solution of equation~\eqref{equation551}, in the sense of Definition~\ref{definition282},
is also a strong solution, we now set out to prove that the solutions of equation~\eqref{equation550} obtained previously as the limit of our numerical scheme, provide us with such a weak solution and in the process establish their uniqueness and differentiability.

\begin{lemma}\label{lemma602}The weak solutions $u_{Di}^R(t)$ to the equations~\eqref{equation550}, obtained from our numerical scheme, when taken as the components of $u_{D}^R:[0,T)\rightarrow X_D$, produce a $u_{D}^R$ which is integrable.
\end{lemma}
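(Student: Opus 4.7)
The plan is to combine the uniform pointwise bound from Lemma~\ref{lemma503} with weak lower semicontinuity of the $L_1$ norm to control each component $u_{Di}^R$ in $L_1([0,T))$, and then sum over $i$ (with the appropriate weights) to obtain integrability into $X_D$.

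First I would unpack the definition of the approximate solution~\eqref{equation4125}. Lemma~\ref{lemma503} provides the bound $0 \le u_D^{n,i} \le C$ uniformly in $n$, $i$, $h$ and $R$, which translates immediately to $0 \le u_{Di}^h(t) \le C$ for almost every $t \in [0,T)$, giving
\[\|u_{Di}^h\|_{L_1([0,T))} = \int_0^T u_{Di}^h(t)\,\mathrm{d}t \le CT\]
for every admissible $h$ and every $i = 1, \ldots, N$. Since, by the convention adopted at the close of the preceding remark, $u_{Di}^h \rightharpoonup u_{Di}^R$ in $L_1([0,T))$, the standard weak lower semicontinuity of the norm yields
\[\|u_{Di}^R\|_{L_1([0,T))} \le \liminf_{h \to 0}\|u_{Di}^h\|_{L_1([0,T))} \le CT.\]
In particular each component $u_{Di}^R$ belongs to $L_1([0,T))$ and so is (scalar) measurable on $[0,T)$.

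Next, to establish Bochner integrability of the vector-valued map $u_D^R : [0,T) \to X_D$, I would exploit the fact that $X_D = \mathbb{R}^N$ is finite dimensional, so strong measurability is equivalent to measurability of each of the $N$ scalar coordinates; this has just been verified. With the weighted norm $\|v\|_{X_D} = \sum_{j=1}^{N} j|v_j|$ and the earlier nonnegativity of the $u_{Di}^R$ (which persists under weak $L_1$ convergence of nonnegative sequences), I would then compute directly
\[\int_0^T \|u_D^R(t)\|_{X_D}\,\mathrm{d}t = \sum_{i=1}^{N} i \int_0^T |u_{Di}^R(t)|\,\mathrm{d}t = \sum_{i=1}^{N} i\,\|u_{Di}^R\|_{L_1([0,T))} \le \frac{N(N+1)}{2}CT < \infty,\]
which gives the required integrability.

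The argument is essentially mechanical and I do not anticipate any substantive obstacle; the only step that deserves a moment's care is the transfer of the uniform $L_1$ bound from the approximating sequence to the weak limit, which is handled by weak lower semicontinuity (or, equivalently, by testing against the constant function $1 \in L_\infty([0,T))$). The finiteness of $N$ and of the norm weights makes the passage from the scalar bounds on the components to the $X_D$-valued bound completely routine.
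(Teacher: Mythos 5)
Your proposal is correct and follows essentially the same route as the paper: the uniform bound $\|u_{Di}^h\|_{L_1(0,T)}\leq CT$ from \eqref{equation513}, weak lower semicontinuity of the $L_1$ norm to transfer this to the limits $u_{Di}^R$, and a weighted sum over the finitely many components to bound $\int_0^T\|u_D^R(t)\|_{X_D}\dd t$. The only differences are cosmetic (your sharper constant $N(N+1)/2$ versus the paper's $N^2$, and your explicit remark on measurability in the finite-dimensional setting).
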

\begin{proof}The weak solutions $u_{Di}^R$ to the equations \eqref{equation550}, constructed in the previous section, were obtained as the weak limits in $L_1(0,T)$ of the sequences $\left\lbrace u^h_{Di} \right \rbrace$ as we let $h\searrow 0$. By the weak lower semicontinuity of the norm \cite[Theorem 2.11]{lieb01}, and using the bound \eqref{equation513}, we obtain
\[\|u_{Di}^R\|_{L_1(0,T)}\leq\liminf_{h\rightarrow 0}\left\|u^h_{Di} \right\|_{L_1(0,T)}\leq CT,\]
where $C$ denotes the constant from \eqref{equation513}. If we take the components of the function $u_{D}^R:[0,T)\rightarrow X_D$
to be given by $u_{Di}^R$ for $i=1,\ldots,N$ then we get
\begin{align*}
\int_0^T \|u_{D}^R(t)\|_{X_D}\dd t=\int_0^T \sum_{i=1}^Ni\left|u_{Di}^R(t)\right|\dd t
\leq N\sum_{i=1}^N \underbrace{\int_0^T \left|u_{Di}^R(t)\right|\dd t}_{=\|u_{Di}^R\|_{L_1(0,T)}}
\leq N^2CT<\infty.
\end{align*}
Therefore the function $u_{D}^R:[0,T)\rightarrow X_D$ formed by taking $u_{Di}^R$ as its $i^{th}$ component is integrable.
\end{proof}

\begin{theorem}\label{theorem604}The weak solutions $u_{Di}^R:[0,T)\rightarrow \mathbb{R}$ for $i=1,\ldots,N$ of the equations~\eqref{equation515} obtained in the previous section agree with the components of the strong solution established in Theorem~\ref{theorem602} and hence are unique (up to sets of measure zero) and differentiable.
\end{theorem}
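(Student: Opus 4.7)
The plan is to lift the $N$ scalar weak solutions into a single $X_D$-valued weak solution of the inhomogeneous abstract Cauchy problem~\eqref{equation551}, and then appeal to Theorem~\ref{theorem603}. Accordingly, I would define $u_D^R:[0,T)\rightarrow X_D$ componentwise by $[u_D^R(t)]_i = u_{Di}^R(t)$ for $i=1,\ldots,N$. Lemma~\ref{lemma602} already confirms that $u_D^R$ lies in $L_1((0,T),X_D)$, so the integrability hypothesis required by Theorem~\ref{theorem603} is automatic, and all that needs to be established is that $u_D^R$ is a weak solution in the sense of Definition~\ref{definition282} (adapted to the inhomogeneous problem by including the driving term $C_R[u_C^R(s)]$ alongside $(A_D+B_D)u_D^R(s)$).

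Since $X_D=\mathbb{R}^N$ is finite-dimensional, its dual is also $\mathbb{R}^N$ and is spanned by the standard basis $\{e_i\}_{i=1}^{N}$. Because both sides of the abstract weak identity~\eqref{equation282} depend linearly on the test vector $\phi$, I would invoke Remark~\ref{remark284} and verify the identity only for $\phi = e_i$, $i=1,\ldots,N$. For such a $\phi$ the duality pairings simplify to $\langle u_D^R(s),e_i\rangle=u_{Di}^R(s)$, $\langle d_0,e_i\rangle=d_{0,i}$, and
\begin{align*}
\langle (A_D+B_D)u_D^R(s)+C_R[u_C^R(s)],e_i\rangle
=-a_iu_{Di}^R(s)+\sum_{j=i+1}^{N}a_jb_{i,j}u_{Dj}^R(s)+\int_{N}^{R}a(y)b_i(y)u_C^R(y,s)\dd y.
\end{align*}
Substituting these expressions into the abstract weak identity and rearranging reproduces exactly the scalar weak formulation~\eqref{equation515}, modulo the trivial inclusion $C_c^{\infty}([0,T))\subset C_c^{2}([0,T))$ which permits any smooth compactly supported $\psi$ to be used as a test function in~\eqref{equation515}. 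Consequently the abstract weak identity is verified on each $e_i$, and hence on all of $X_D'$ by linearity.

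With $u_D^R$ established as an integrable weak solution of~\eqref{equation551}, Theorem~\ref{theorem603} immediately delivers the desired conclusion: $u_D^R$ must coincide almost everywhere with the unique strong solution furnished by Theorem~\ref{theorem602}, and is in particular strongly differentiable in $X_D$. Reading this componentwise yields the uniqueness (up to sets of measure zero) and differentiability of each scalar weak solution $u_{Di}^R$. I do not foresee a substantial obstacle here: the heavy lifting is already done by the semigroup and perturbation arguments underpinning Theorems~\ref{theorem602} and~\ref{theorem603}, and the present step amounts to clean bookkeeping that translates between the scalar weak formulation inherited from the numerical scheme and the $X_D$-valued weak formulation of the abstract Cauchy problem. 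The only place requiring slight care is matching signs and terms correctly when summing the component-wise identities to obtain the vector identity, and verifying that the sign convention for the initial datum term $d_{0,i}\phi(0)$ in~\eqref{equation515} lines up with the $-\langle d_0,\phi\rangle\psi(0)$ convention in~\eqref{equation282}.
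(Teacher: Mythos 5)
Your proposal is correct and follows essentially the same route as the paper: both lift the scalar weak solutions into an $X_D$-valued function, check integrability via Lemma~\ref{lemma602}, match the componentwise duality pairings against the scalar weak formulation~\eqref{equation515} (the paper writes the identity for a general $\phi\in\mathbb{R}^N$ as a linear combination, which is the same as your verification on basis vectors plus linearity), and then invoke Theorem~\ref{theorem603}. The sign bookkeeping you flag does line up, since moving the terms of~\eqref{equation282} to one side with $F=(A_D+B_D)u_D^R+C_R[u_C^R]$ reproduces~\eqref{equation515} exactly.
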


\begin{proof}Let us consider our abstract equation~\eqref{equation551} with the aim of rewriting it in a weak formulation as in \eqref{equation282}. First let us note that the dual space of $X_D$ is $\mathbb{R}^N$ and the duality pairing $\langle \cdot,\cdot\rangle$ appearing in \eqref{equation282} is given by the standard inner product on $\mathbb{R}^N$. Let $u_{D}^R:[0,T)\rightarrow X_D$ be a weak solution of equation~\eqref{equation551} with the components $u_{Di}^R:[0,T)\rightarrow \mathbb{R}$ for $i=1,\ldots,N$. Then, in this case, the function $F:(0,T)\rightarrow X_D$ appearing in \eqref{equation282} is given componentwise by
\[F_i(t)=-a_iu_{Di}^R(t)+\hspace{-1.9mm}\sum_{j=i+1}^{N}\hspace{-1.2mm}a_jb_{i,j}u_{Dj}^R(t)
+\hspace{-1mm}\int_{N}^{R}a(y)b_i(y)(u_C^R(t))(y)\dd y,\]
for $i=1,\ldots,N$. Therefore the weak formulation of equation~\eqref{equation551} becomes:
\begin{align*}
&\sum_{i=1}^N \phi_i \int_{0}^T \hspace{-1.5mm}u_{Di}^R(t) \frac{\ddn}{\ddn t} \psi(t) \dd t
=-\sum_{i=1}^N \phi_i{d_{0}}_i\psi(0)
+\sum_{i=1}^N \phi_i\int_{0}^T\hspace{-1.5mm} a_iu_{Di}^R(t) \psi(t) \dd t\\
&-\sum_{i=1}^N \phi_i\int_{0}^T\hspace{-1.5mm} \sum_{j=i+1}^{N}\hspace{-1.5mm}a_jb_{i,j}u_{Dj}^R(t) \psi(t) \dd t
-\sum_{i=1}^N \phi_i\int_{0}^T \hspace{-1mm}\int_{N}^{R}a(y)b_i(y)u_{C}^R(y,t)\psi(t)\dd y \dd t,
\end{align*}
for any $\phi=\left(\phi_1,\ldots,\phi_N\right)\in \mathbb{R}^N$ and $\psi \in C_c^\infty\left([0,T)\right)$, where $u_{C}^R(\cdot,t)$ is the (unique) real-valued representation of the $X_C^R$-valued $u_{C}^R:[0,T)\rightarrow X_C^R$.

Comparing this with equation~\eqref{equation515} of the previous section, it is easily seen that the $u_{Di}^R$ obtained there provide
us with a solution to the above equation and so taking these $u_{Di}^R:[0,T)\rightarrow \mathbb{R},$ $i=1,\ldots,N$, as the components of
an $X_D$-valued function $u_{D}^R:[0,T)\rightarrow X_D$ provides us with an integrable weak solution to equation~\eqref{equation551}. As any such
$u_{D}^R:[0,T)\rightarrow X_D$ must be unique and differentiable in $X_D$, the components $u_{Di}^R:[0,T)\rightarrow \mathbb{R}$ must be unique
(up to sets of measure zero) and differentiable in the traditional sense.
\end{proof}
\noindent In this article we have shown the convergence of our approximate solutions to a weak solution of the truncated problem given by equations \eqref{equation40001} and \eqref{equation403}, and the equivalence of this weak solution to the unique strong/classical solution of the truncated problem. However, it is possible using standard arguments along the lines of \cite[Section 8.3.2]{banasiak06} to show these truncated solutions converge, in the sense of the appropriate space, to the unique strong/classical solution of the untruncated problem as given by \eqref{equation301} and \eqref{equation302}, whose existence was established in \cite{baird18}. For further details of this convergence in the specific case of the mixed discrete-continuous model, the reader is directed to consult \cite[Chapter 6]{bairdphd}. Furthermore, for an experimental study of this convergence, the factors influencing it and therefore the selection of a suitable value for the truncation parameter $R$, the reader is directed to \cite[Section 7.5]{bairdphd}.

\section{Numerical Experiments}\label{sectionsexperiments}
To assess the efficacy of our numerical scheme, we tested it on the power law model as set out in \cite[Section 7]{baird18}, where the continuous equation was defined by
\begin{equation*}
a(x)=x^\alpha,\hspace{4mm}\alpha\in\mathbb{R},\hspace{5mm}\text{and}\hspace{5mm}b(x|y)=(\nu+2)\frac{x^\nu}{y^{\nu+1}},\hspace{4mm}-2<\nu\leq0.
\end{equation*}
The discrete equation was specified by the following values for $a_i$ and $b_{i,j}$
\[
a_i=\left\{ \begin{aligned}
\hspace{2mm} 0\hspace{4mm} &\textup{for} \hspace{2mm} i=1, \\
\hspace{2mm} i^\alpha\hspace{4mm} &\textup{for} \hspace{2mm} i=2,\ldots,N,
\end{aligned} \right.\hspace{5mm}
b_{i,j}=\frac{2}{j-1}, \hspace{5mm}i=1,\ldots,N-1,\hspace{1.2mm}j=i,\ldots,N,
\]
and the continuous to discrete distribution functions $b_i(y)$ were given by
\begin{align*}
b_i(y)=\frac{i^{\nu+2}-(i-1)^{\nu+2}}{iy^{\nu+1}}, \hspace{5mm}y>N,\hspace{1.2mm}i=1,\ldots,N.
\end{align*}

\begin{figure}[h]
\begin{minipage}[b]{0.45\linewidth}
\centering
\includegraphics[scale=0.5]{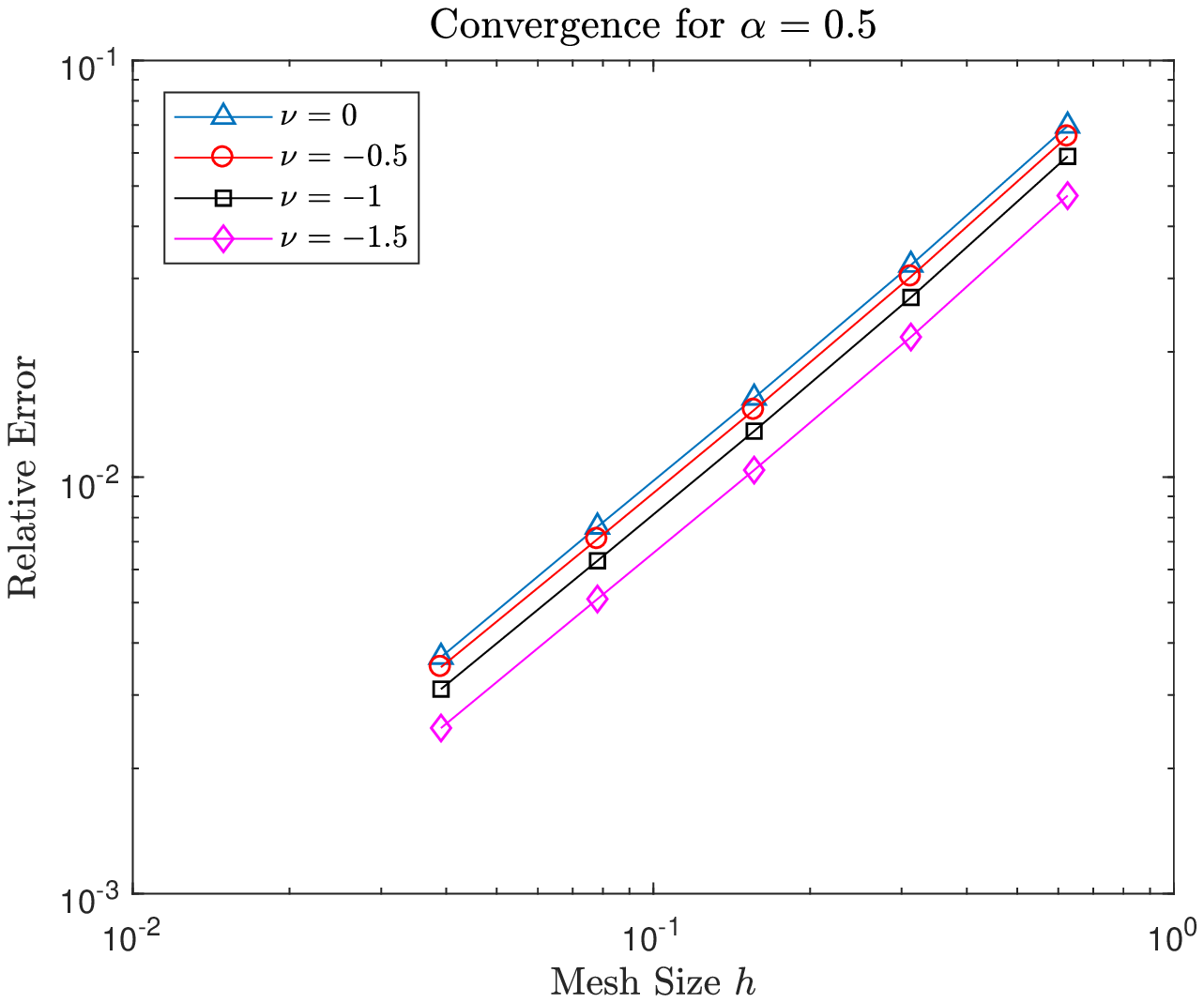}\\\hspace{0mm}\\
\includegraphics[scale=0.5]{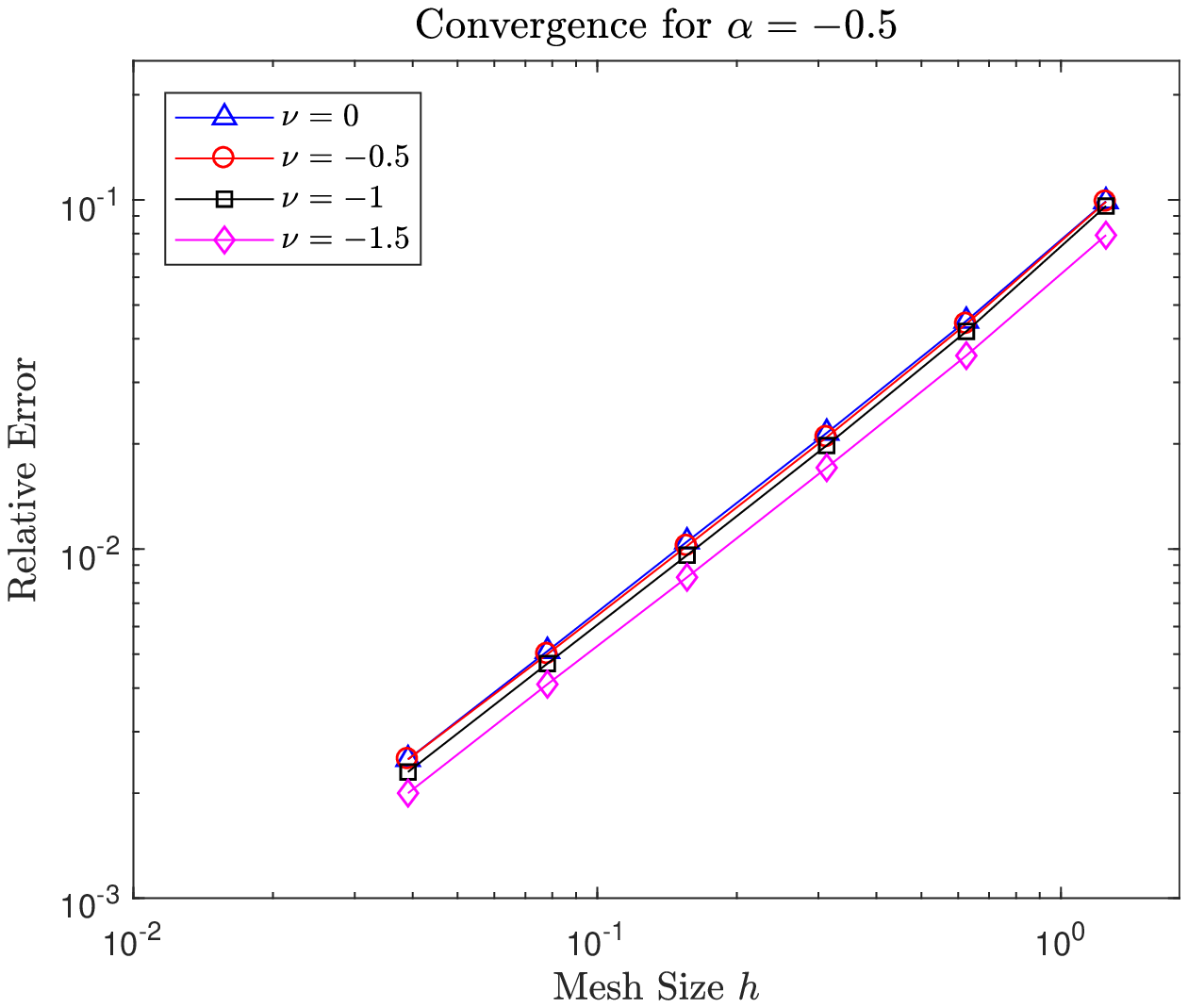}
\end{minipage}
\hspace{0.5cm}
\begin{minipage}[b]{0.45\linewidth}
\centering
\includegraphics[scale=0.5]{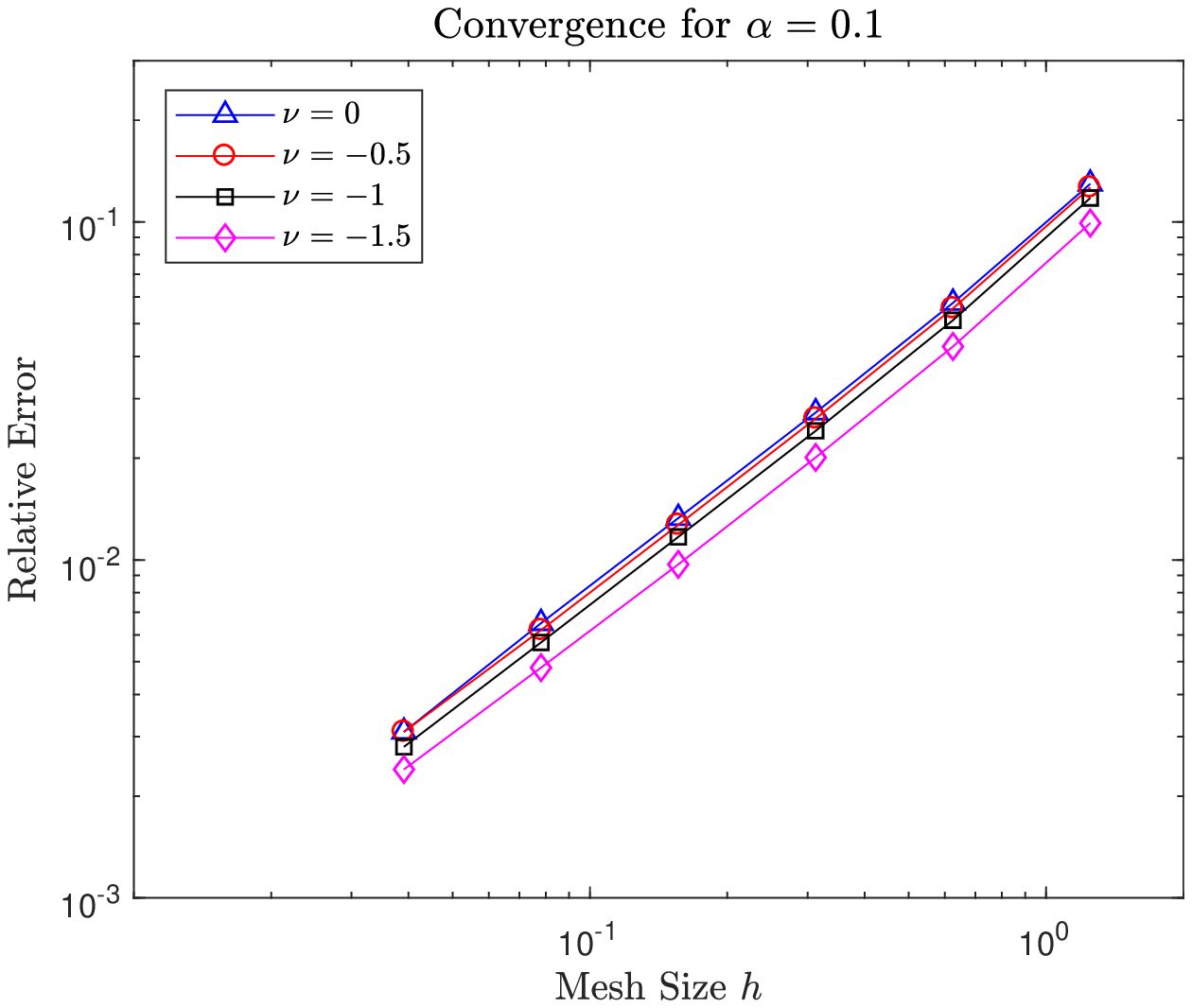}\\\hspace{0mm}\\
\includegraphics[scale=0.5]{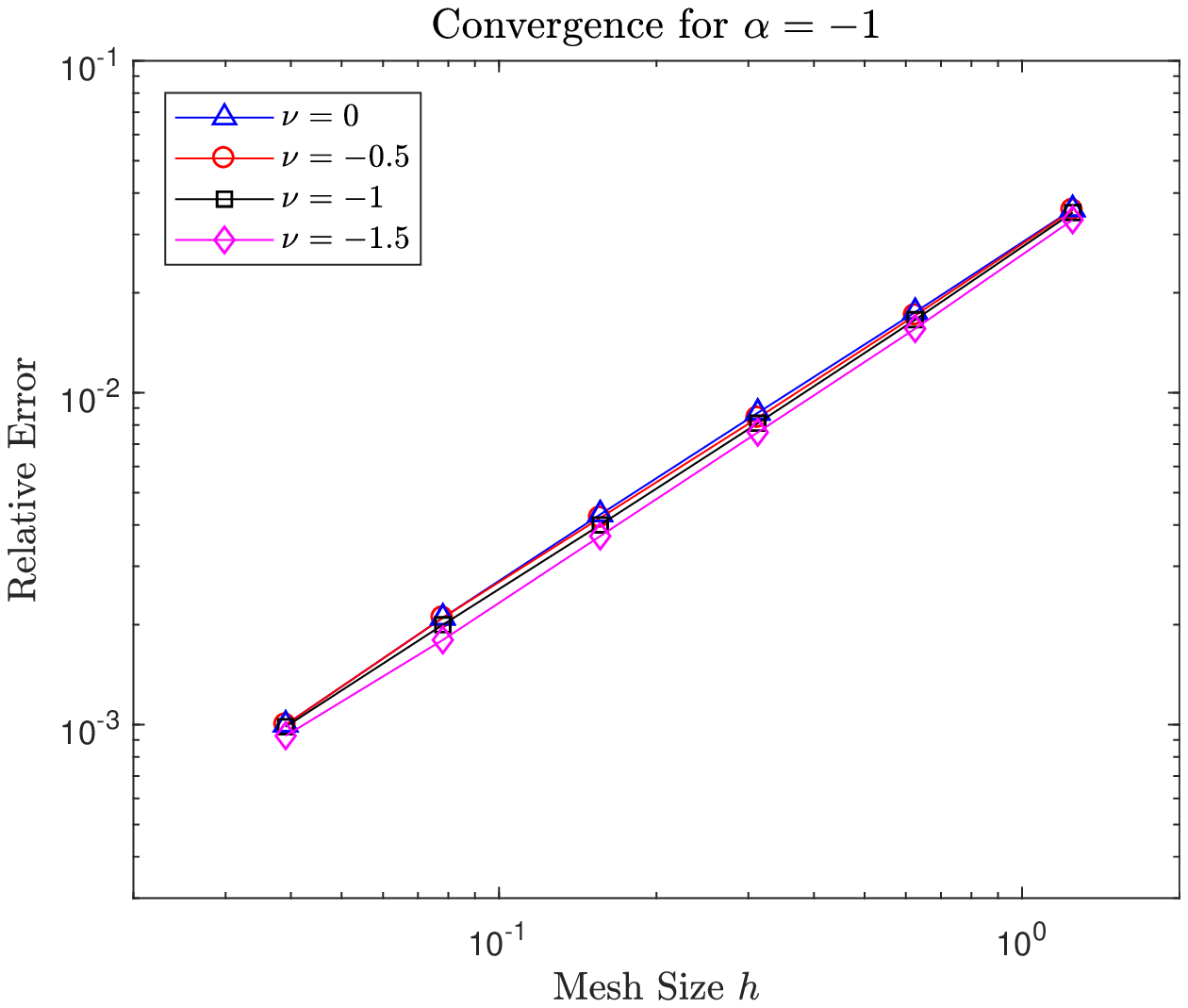}
\end{minipage}
\centering{
\includegraphics[scale=0.5]{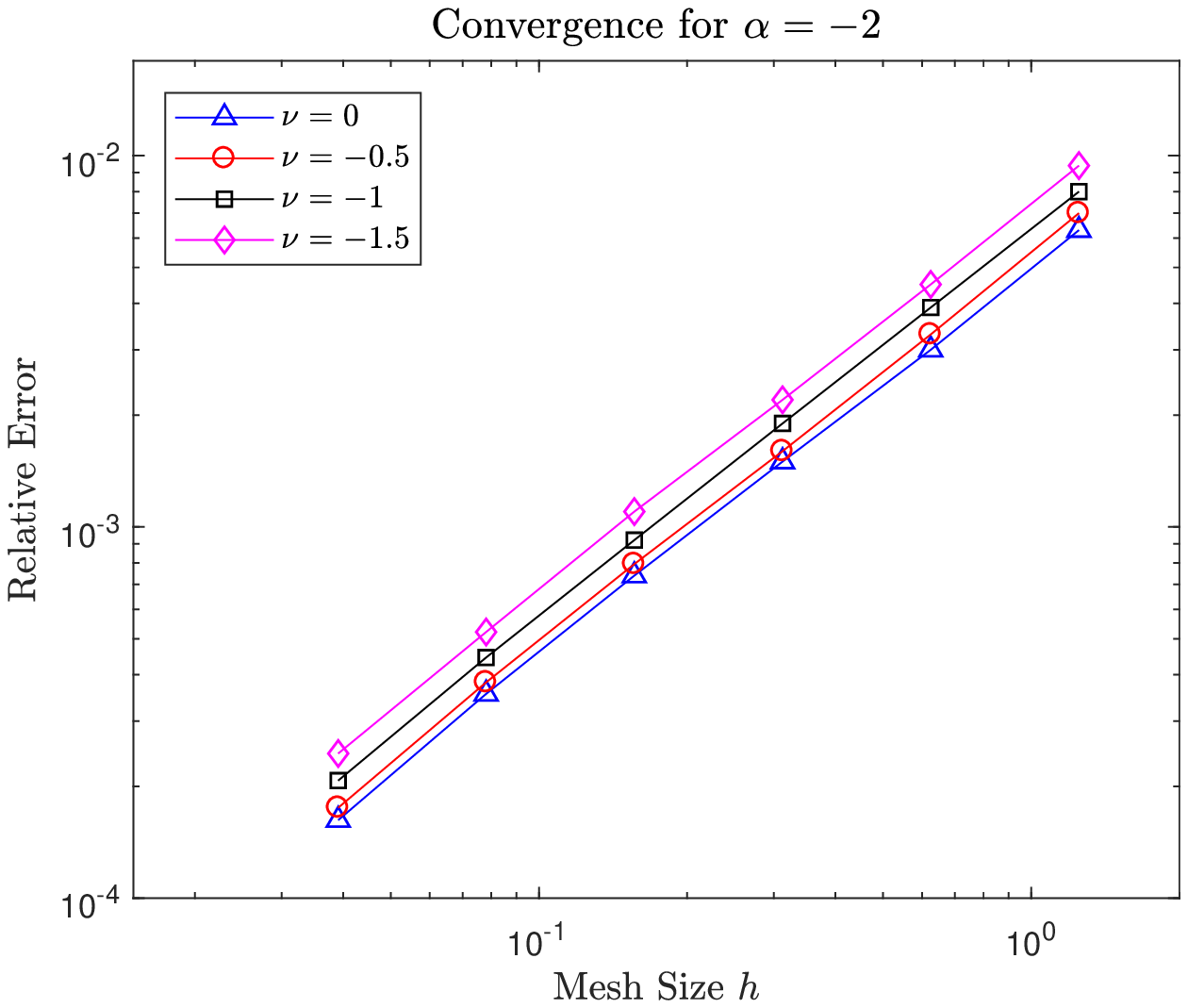}
\caption{Numerical convergence for $\alpha=0.5,0.1,-0.5,-1$ and $-2$.} \label{figure705}}
\end{figure}

It can be easily verified that conditions \eqref{equation303} and \eqref{equation304} are satisfied by these choices. The parameters $N$ and $R$ were set at $5$ and $15$ respectively, and an initial state assumed, with
\[
c_0(x)=\left\{ \begin{aligned}
\hspace{2mm} 1\hspace{4mm} &\textup{for} \hspace{2mm} 5<x<15, \\
\hspace{2mm} 0\hspace{4mm} &\textup{for} \hspace{2mm} x\geq 15,
\end{aligned} \right.
\]
and $d_0$ being the $N-$vector consisting entirely of $1'$s. The parameters $\alpha$ and $\nu$ were varied, taking all possible combinations of $\alpha\in\left\lbrace0.5,0.1,-0.5,-1,-2\right\rbrace$ and $\nu\in\left\lbrace0,-0.5,-1,-1.5\right\rbrace$, with the final time $T$ selected in each case to allow the system to reach a near equilibrium state.

The approximate solutions generated by the numerical scheme were compared to the exact solutions derived in \cite[Section 7.1]{baird18}, with the discrepancy being measured by taking the relative error with respect to the norm on $L_1\left([0,T),X_D\right)\times L_1\left([0,T),X_C^R\right)$. That is, supposing $u^h=(\underline{u}_D^h(t),u_C^h(x,t))$ is our approximation of an exact solution $u=(\underline{u}_D(t),u_C^R(x,t))$, then we measure the error via
\begin{equation*}
	\textrm{Error}(u^h|u)=\frac{\|u^h-u\|}{\|u\|},
\end{equation*}
where the norm $\|\cdot\|$, is given by
\begin{align*}
\|u\|&=\int_0^T\sum_{i=1}^N i \left|u_{Di}(t)\right|\dd t+\int_0^T\int_N^R\left|u_C^R(x,t)\right|\,x\dd x\dd t.
\end{align*}
For each model configuration, we computed approximate solutions over a sequence of uniform meshes, refining at each step by halving the mesh parameter $h$. The charts in Figure \ref{figure705} plot the observed relative error against the mesh parameter $h$, for all possible parameter configurations. From even the briefest examination of the charts it is clear that as the mesh is refined, the relative error of the approximations is reduced. Whilst if we were to examine the gradients of the lines appearing in Figure \ref{figure705}, then they appear generally to be getting closer to 1, as the mesh is refined. With the gradients between the most refined mesh pairings having a mean value of 1.0301, across all configurations. This would suggest that our numerical scheme has order $\gamma\approx1$, with the error in the approximations being $\mathcal{O}(h)$. The full numerical details of the errors and the associated convergence rates underlying Figure \ref{figure705} may be found in  \cite[Appendix A]{bairdphd}.

\section{Conclusions}

In this article we introduced a numerical scheme for the approximate solution of
a truncated version of a mixed discrete-continuous fragmentation model. The scheme was
based upon a finite volume discretisation of the modelling equation for the continuous component.

The resulting numerical approximations were first shown to be nonnegative and
to conserve mass, provided the underlying mesh satisfied certain constraints.
Following which we established the weak convergence of a subsequence of our
approximations, as the mesh size parameter $h$ was decreased to zero. The resulting limits
were then shown to provide a weak solution to the truncated model.

By relating the scalar-valued weak formulation of our truncated model to an
equivalent weak formulation within a Banach space setting, we were able to establish
a one-to-one relationship between any scalar and Banach-space-valued
weak solutions. Under suitable constraints, these Banach-space-valued weak solutions
were shown to provide the unique strong solution to the associated abstract Cauchy
problem, in the process establishing the uniqueness of the original scalar-valued
weak solutions. Additionally, as a further consequence of this linkage, the scalar
weak solutions were shown to be differentiable classical solutions.

Finally, by conducting a range of experiments with a test model, under varying model
parameter choices and mesh refinements, we experimentally established that the
error in our numerical solutions was $\mathcal{O}(h)$.

\section*{Acknowledgments}
\noindent This work was supported by the UK Engineering and Physical Sciences Research Council [EP/J500495/1  03].

\bibliographystyle{elsarticle-num}
\bibliography{references2}

\end{document}